\documentclass[11pt]{article}
\usepackage[margin=1in]{geometry}
\usepackage[dvipsnames]{xcolor}
\usepackage{mathtools}
\usepackage{amsfonts}
\usepackage{amssymb}

\usepackage{mathrsfs}
\usepackage{bm}
\usepackage{bbm}
\usepackage{eucal}
\usepackage{enumerate}
\usepackage{enumitem}
\usepackage{subcaption}
\usepackage{soul}
\usepackage{comment}
\usepackage{xspace}
\usepackage{mleftright}
\usepackage{dsfont}
\usepackage{booktabs}
\usepackage{newtxtext}
\usepackage{nicefrac}
\usepackage{relsize} \usepackage{amsmath,amssymb}
\usepackage{algorithm}
\usepackage{algorithmicx}
\usepackage{algpseudocode}
\algtext*{EndFor}
\algtext*{EndIf}
\algtext*{EndFunction}
\algnewcommand\INPUT{\item[\textbf{Input:}]}\algnewcommand\OUTPUT{\item[\textbf{Output:}]}

\usepackage[most]{tcolorbox}
\usepackage{tikz}
\usetikzlibrary{spath3} \usepackage{amsthm}
\usepackage{cite}
\pgfdeclarelayer{background}
\pgfsetlayers{background,main}

\DeclareFontFamily{U}{FdSymbolA}{}
\DeclareFontShape{U}{FdSymbolA}{m}{n}{<->FdSymbolA-Book}{}
\DeclareSymbolFont{fdsymbols}{U}{FdSymbolA}{m}{n}
\DeclareMathSymbol{\varheartsuit}{\mathord}{fdsymbols}{184}

\let\oldmathbb\mathbb
\renewcommand{\mathbb}[1]{\ifnum\pdfstrcmp{#1}{1}=0
    \mathds{1}\else
    \oldmathbb{#1}\fi
}

\usepackage[
  pagebackref,
  breaklinks,
  colorlinks,
  linkcolor=NavyBlue,
  citecolor=Blue,
  filecolor=Purple,
  urlcolor=Purple
]{hyperref}

\renewcommand*{\backref}[1]{}
\renewcommand*{\backrefalt}[4]{\ifcase #1 (No citations.)\or
  (Cited p.~#2.)\else
  (Cited pp.~#2.)\fi
}

\usepackage[capitalize,nameinlink,noabbrev]{cleveref}

\newcommand{\ENEdone}{}
\newcommand{\RAMdone}{}
\newcommand{\CCdone}{}
\usepackage{wasysym}

\author{Chris Camaño\thanks{Department of Computing and Mathematical Sciences, California Institute of Technology, Pasadena, CA 91125 USA (\email{ccamano@caltech.edu}, \email{eepperly@berkeley.edu},  \email{ram900@berkeley.edu}, \email{jtropp@caltech.edu})}\and Ethan N. Epperly\footnotemark[1] \and Raphael A. Meyer\footnotemark[1] \and Joel A. Tropp\footnotemark[1]}

\hypersetup{
  colorlinks = true,
  linkcolor  = Green,
  citecolor  = NavyBlue,
  filecolor  = Plum,
  urlcolor   = Plum
}

\newcommand{\email}[1]{\href{mailto:#1}{#1}}

\algrenewcommand\alglinenumber[1]{\sf\scriptsize\color{NavyBlue}{#1}}
\definecolor{cit}{rgb}{0.91,0.39,0.16}
\definecolor{dark-gray}{gray}{0.3}
\definecolor{dkgray}{rgb}{.3,.3,.3}
\definecolor{medgray}{rgb}{.5,.5,.5}
\definecolor{ltgray}{rgb}{.7,.7,.7}
\definecolor{dkblue}{rgb}{0,0,.5}
\definecolor{medblue}{rgb}{0,0,.75}
\definecolor{ltblue}{rgb}{0.97,0.97,1}
\definecolor{rust}{rgb}{0.5,0.1,0.1}
\definecolor{ltyellow}{rgb}{1,1,0.8}

\sethlcolor{ltyellow}

\DeclareMathOperator{\tr}{Tr}

\DeclareMathOperator{\diag}{diag}
\DeclareMathOperator{\orth}{orth}

\newcommand{\lowrank}[1]{\mleft\llbracket #1 \mright\rrbracket}
\newcommand{\norm}[1]{\mleft\| #1 \mright\|}
\newcommand{\snorm}[1]{\mleft\|\smash{#1}\mright\|}

\newcommand{\Id}{\mathbf{I}}

\newcommand{\bmat}[1]{\begin{bmatrix} #1 \end{bmatrix}}

\newcommand{\sbmat}[1]{\left[\begin{smallmatrix} #1 \end{smallmatrix}\right]}

\DeclareMathOperator{\Var}{Var}

\DeclareMathOperator{\E}{\mathbb{E}}

\newcommand{\bernoulli}{\text{\textsc{Bernoulli}}}

\newcommand{\order}{\mathcal{O}}

\newcommand{\defeq}{\coloneqq}
\newcommand{\eps}{\varepsilon}
\let\epsilon\eps

\newcommand{\set}[1]{\mathsf{#1}}
\newcommand{\e}{\mathrm{e}}

\renewcommand{\hat}[1]{\widehat{#1}}
\renewcommand{\tilde}[1]{\widetilde{#1}}

\let\originalchi\chi
\newcommand{\raisedchi}[2]{\raisebox{0.4ex}{$#1#2$}}
\renewcommand{\chi}{{\mathpalette{\raisedchi}{\originalchi}}}

\DeclareMathOperator*{\argmin}{argmin}

\newcommand{\actionbox}[1]{\begin{tcolorbox}[colback=white,colframe=black,width=\columnwidth,boxsep=5pt,arc=4pt]
    #1
  \end{tcolorbox}
}

\definecolor{ltyellow}{rgb}{1, 1, 0.9}
\usepackage{soul}
\sethlcolor{ltyellow}

\let\oldthebibliography\thebibliography
\renewcommand{\thebibliography}[1]{\oldthebibliography{#1}\setlength{\itemsep}{0pt}\setlength{\parskip}{0pt}}

\renewcommand{\eqref}{\cref}

\usepackage{xcolor}
\colorlet{deadgray}{black!55}

\usepackage{stmaryrd}

\newcommand{\lra}[2]{\left\llbracket #1 \right\rrbracket_{#2}}

\newcommand{\nystrompp}{\text{Nystr\"om\raisebox{0.35ex}{\relscale{0.75}++}}\xspace}
\newcommand{\XNysTrace}{XNysTrace\xspace}
\newcommand{\nystromppest}{\hat{\tr}_{\rm N\raisebox{0.35ex}{\relscale{0.75}++}}}
\newcommand{\xnystraceest}{\hat{\tr}_{\rm XN}}

\DeclareMathOperator{\GH}{H}
\newcommand{\iu}{\mathrm{i}}
\renewcommand{\d}{\mathrm{d}}
\renewcommand{\bar}{\overline}
\newcommand{\GramNystrom}{\textsc{GramNyström}\xspace}
\algrenewcommand\algorithmiccomment[1]{\hfill\textcolor{NavyBlue}{$\triangleright$ #1}}
\renewcommand{\defeq}{\coloneqq}

\newsavebox{\algsectionbox}
\newcommand{\algsection}[1]{\Statex
  \vspace{-0.35\baselineskip}\sbox{\algsectionbox}{{\scriptsize\textsc{#1}}}\makebox[\linewidth]{\leaders\hrule height 0.3pt\hfill
    \hspace{0.25em}\usebox{\algsectionbox}\hspace{0.25em}\leaders\hrule height 0.3pt\hfill\kern0pt
  }\vspace{0.05\baselineskip}}

\definecolor{MonteCarloRed}{HTML}{C02232}
\DeclareRobustCommand{\blackdashedline}{\tikz[baseline=-0.55ex]{\draw[
            black,
            line width=0.8pt,
            dash pattern=on 2.2pt off 1.5pt
        ] (0,0) -- (2.0em,0);
    }}

\DeclareRobustCommand{\reddashedline}{\tikz[baseline=-0.55ex]{\draw[
            MonteCarloRed,
            line width=0.8pt,
            dash pattern=on 2.2pt off 1.5pt
        ] (0,0) -- (2.0em,0);
    }} \newcommand{\QB}{\ensuremath{\mathsf{QB}}\xspace}
\newcommand{\bbR}{\ensuremath{\mathbb{R}}\xspace}
\newcommand{\bbC}{\ensuremath{\mathbb{C}}\xspace}
\newcommand{\bbF}{\ensuremath{\mathbb{F}}\xspace}

\newcommand{\onevec}{\ensuremath{\mathbb{1}}\xspace}
\newcommand{\indicator}{\onevec}

\let\vec\relax
\newcommand{\vec}[1]{\boldsymbol{#1}}

\newcommand{\vb}{\ensuremath{\boldsymbol{b}}\xspace}

\newcommand{\vg}{\ensuremath{\boldsymbol{g}}\xspace}
\newcommand{\vh}{\ensuremath{\boldsymbol{h}}\xspace}
\newcommand{\vi}{\ensuremath{\boldsymbol{i}}\xspace}
\newcommand{\vj}{\ensuremath{\boldsymbol{j}}\xspace}
\newcommand{\vk}{\ensuremath{\boldsymbol{k}}\xspace}
\newcommand{\vl}{\ensuremath{\boldsymbol{\ell}}\xspace}

\newcommand{\vs}{\ensuremath{\boldsymbol{s}}\xspace}

\newcommand{\vu}{\ensuremath{\boldsymbol{u}}\xspace}
\newcommand{\vv}{\ensuremath{\boldsymbol{v}}\xspace}

\newcommand{\vx}{\ensuremath{\boldsymbol{x}}\xspace}
\newcommand{\vy}{\ensuremath{\boldsymbol{y}}\xspace}

\newcommand{\vphi}{\ensuremath{\boldsymbol{\phi}}\xspace}

\newcommand{\vpsi}{\ensuremath{\boldsymbol{\psi}}\xspace}
\newcommand{\vomega}{\ensuremath{\boldsymbol{\omega}}\xspace}

\newcommand{\vone}{\ensuremath{\mathbf{1}}\xspace}
\newcommand{\vzero}{\ensuremath{\mathbf{0}}\xspace}

\newcommand{\mA}{\ensuremath{\mathbf{A}}\xspace}
\newcommand{\mB}{\ensuremath{\mathbf{B}}\xspace}
\newcommand{\mC}{\ensuremath{\mathbf{C}}\xspace}

\newcommand{\mE}{\ensuremath{\mathbf{E}}\xspace}

\newcommand{\mG}{\ensuremath{\mathbf{G}}\xspace}
\newcommand{\mH}{\ensuremath{\mathbf{H}}\xspace}
\newcommand{\mI}{\ensuremath{\mathbf{I}}\xspace}

\newcommand{\mL}{\ensuremath{\mathbf{L}}\xspace}
\newcommand{\mM}{\ensuremath{\mathbf{M}}\xspace}

\newcommand{\mQ}{\ensuremath{\mathbf{Q}}\xspace}
\newcommand{\mR}{\ensuremath{\mathbf{R}}\xspace}
\newcommand{\mS}{\ensuremath{\mathbf{S}}\xspace}
\newcommand{\mT}{\ensuremath{\mathbf{T}}\xspace}
\newcommand{\mU}{\ensuremath{\mathbf{U}}\xspace}
\newcommand{\mV}{\ensuremath{\mathbf{V}}\xspace}

\newcommand{\mX}{\ensuremath{\mathbf{X}}\xspace}
\newcommand{\mY}{\ensuremath{\mathbf{Y}}\xspace}
\newcommand{\mZ}{\ensuremath{\mathbf{Z}}\xspace}
\newcommand{\mLambda}{\ensuremath{\mathbf{\Lambda}}\xspace}
\newcommand{\mSigma}{\ensuremath{\mathbf{\Sigma}}\xspace}

\newcommand{\mOmega}{\ensuremath{\mathbf{\Omega}}\xspace}
\newcommand{\mPsi}{\ensuremath{\mathbf{\Psi}}\xspace}

\newcommand{\mrho}{\ensuremath{\boldsymbol{\rho}}\xspace}

\newcommand{\tA}{\ensuremath{\bm{\mathcal{A}}}\xspace}
\newcommand{\tB}{\ensuremath{\bm{\mathcal{B}}}\xspace}
\newcommand{\tC}{\ensuremath{\bm{\mathcal{C}}}\xspace}

\newcommand{\tG}{\ensuremath{\bm{\mathcal{G}}}\xspace}

\newcommand{\tM}{\ensuremath{\bm{\mathcal{M}}}\xspace}
\newcommand{\tN}{\ensuremath{\bm{\mathcal{N}}}\xspace}

\newcommand{\tT}{\ensuremath{\bm{\mathcal{T}}}\xspace}

\newcommand{\tW}{\ensuremath{\bm{\mathcal{W}}}\xspace}
\newcommand{\tX}{\ensuremath{\bm{\mathcal{X}}}\xspace}

\newcommand{\rC}{\ensuremath{\mathrm{C}}\xspace}
\newcommand{\rc}{\ensuremath{\mathrm{c}}\xspace}

\newcommand{\rK}{\ensuremath{\mathrm{K}}\xspace}

\newcommand{\cN}{\ensuremath{\mathcal{N}}\xspace}
\newcommand{\cO}{\ensuremath{\mathcal{O}}\xspace}

 \tikzset{
  leg/.style={thick},
  copy/.style={circle,draw,fill=white,inner sep=0.8pt},
  every node/.style={inner sep=0,outer sep=0}
}

\newcommand{\Rdtensor}[1]{(\mathbb{R}^{d})^{\otimes #1}}

\newcommand{\rmps}{\ensuremath{\mathsf{rMPS}}\xspace}
\newcommand{\mps}{\ensuremath{\mathsf{MPS}}\xspace}
\newcommand{\mpo}{\ensuremath{\mathsf{MPO}}\xspace}
\newcommand{\rmpss}{\ensuremath{\mathsf{rMPSs}}\xspace}
\newcommand{\mpss}{\ensuremath{\mathsf{MPSs}}\xspace}
\newcommand{\mpos}{\ensuremath{\mathsf{MPOs}}\xspace}

\newcommand{\trps}{\rmps test matrices\xspace}
\newcommand{\trp}{\rmps test matrix\xspace}
\newcommand{\rmpstest}{\trp}

\newcommand{\tnshow}[3][0pt]{\mathord{\vcenter{\hbox{\raisebox{#1}{\includegraphics[width=#3\linewidth,keepaspectratio]{figs2/#2.pdf}}}}}}

 \makeatletter
\def\hlinewd#1{\noalign{\ifnum0=`}\fi\hrule \@height #1 \futurelet
	\reserved@a\@xhline}
\makeatother

\newtheorem{theorem}{Theorem}
\newtheorem{proposition}[theorem]{Proposition}

\newtheorem{corollary}[theorem]{Corollary}

\newtheorem{importedtheorem}[theorem]{Imported Theorem}
\newtheorem{importedlemma}[theorem]{Imported Lemma}

\numberwithin{theorem}{section}
\numberwithin{equation}{section}

\theoremstyle{definition}
\newtheorem{definition}[theorem]{Definition}

\theoremstyle{remark}

\crefname{setting}{setting}{settings}
\Crefname{setting}{Setting}{Settings}
\crefname{problem}{problem}{problems}
\Crefname{problem}{Problem}{Problems}
\crefname{equation}{}{}
\Crefname{equation}{Equation}{Equations}
\Crefname{importedtheorem}{Imported Theorem}{Imported Theorems}
\Crefname{importedlemma}{Imported Lemma}{Imported Lemmas}

\makeatletter
\newtheorem*{rep@theorem}{\rep@title}
\newcommand{\newreptheorem}[2]{\newenvironment{rep#1}[1]{\def\rep@title{\Cref{##1}, restated}\begin{rep@theorem}}{\end{rep@theorem}}}
\makeatother
\makeatletter
\newtheorem*{rep@lemma}{\rep@title}
\newcommand{\newreplemma}[2]{\newenvironment{rep#1}[1]{\def\rep@title{\Cref{##1} Restated}\begin{rep@lemma}}{\end{rep@lemma}}}
\makeatother
\makeatletter
\newtheorem*{rep@definition}{\rep@title}
\newcommand{\newrepdefinition}[2]{\newenvironment{rep#1}[1]{\def\rep@title{\Cref{##1}, restated}\begin{rep@definition}}{\end{rep@definition}}}
\makeatother
\makeatletter
\newtheorem*{rep@corollary}{\rep@title}
\newcommand{\newrepcorollary}[2]{\newenvironment{rep#1}[1]{\def\rep@title{\Cref{##1}, restated}\begin{rep@corollary}}{\end{rep@corollary}}}
\makeatother
\newreptheorem{theorem}{Theorem}
\newreplemma{lemma}{Lemma}
\newrepdefinition{definition}{Definition}
\newrepcorollary{corollary}{Corollary}
  \usepackage{graphicx}
\title{Linear algebra at exponential scale \\ via tensor network dimension reduction}
\usepackage{svg}

\author{Chris Camaño\thanks{Department of Computing and Mathematical Sciences, California Institute of Technology, Pasadena, CA 91125 USA 
(\email{ccamano@caltech.edu}, \email{jtropp@caltech.edu})
}
\and
Ethan N. Epperly\thanks{Department of Mathematics, University of California Berkeley, Berkeley, CA 94720 USA 
(\email{eepperly@berkeley.edu})
}
\and
Raphael A. Meyer\thanks{Department of Statistics, University of California Berkeley and International Computer Science Institute, Berkeley, CA 94720 USA 
(\email{ram900@berkeley.edu})
}
\and
Joel A. Tropp\footnotemark[1]
}

\date{\today}

\begin{document}
\maketitle

\begin{abstract}
Many problems in modern scientific computing
are challenging because of a \emph{curse of dimension},
where their mathematical formulation involves objects whose
dimension is \emph{exponential} in the nominal
``size'' of the problem.
Tensor networks can provide a compact representation
for exponentially large vectors and matrices
that arise in applications, but these representations
do not always lead to reliable algorithms.
This paper develops and analyzes techniques
for randomized dimension reduction of tensor
network data. These techniques support a suite of efficient
algorithms for provably solving exponential-scale
linear algebra problems, including trace estimation
and eigenvalue approximation.
The paper includes several stylized illustrations
from quantum many-body physics with ambient dimension
up to $2^{200}$.
\end{abstract}

\section{Introduction}
\RAMdone
\ENEdone 
\CCdone

At the frontier of scientific computing, we encounter problems
that suffer from a \emph{curse of dimension}, where direct methods would require us to instantiate objects that scale exponentially with the nominal ``size'' of the problem \cite{e2021dawning}.
For instance,
\begin{enumerate}
    \item In high-dimensional function approximation, we may need to discretize a function $f : [0,1]^n \to \bbR$.
    Using $d$ points per spatial dimension, the discretization
    requires $d^n$ pieces of data.
    \item In probability theory, the joint probability distribution of $n$ random experiments, each with $d$ possible outcomes, is described by $d^n$ parameters.
    \item In quantum many-body physics, we represent the joint quantum state of $n$ interacting subsystems, each with local dimension $d$, by means of a vector in a Hilbert space with dimension $d^n$. \end{enumerate}
When $n$ is large, any algorithm that manipulates
$d^n$ parameters is doomed.
To address this challenge, we must design algorithms
that work with compact representations
of the problem data, the problem solution, and the intermediate objects
that arise during computations.

For each of the three examples above,
we can sometimes circumvent the exponential scaling
by employing \emph{tensor networks}
\cite{orus14,BR17,Rakhshan26,ahle24} to represent information. A tensor network models an exponentially large vector or matrix as the \emph{contraction} of several tensors of small order; see \cref{subsec:background} for background. Tensor networks have become essential tools for processing high-dimensional data in
applications spanning quantum physics \cite{Scho05,Vers08,Evenbly_2009,Schuch_2011,Paeckel_2019}, quantum chemistry \cite{olivares2015ab,Chan16,Zhai_2023,zhai26}, high-dimensional function approximation \cite{Khoromskij2011OdlogNA,Rip21,tind24,Rod24cheb},
machine learning 
\cite{novikov2015tensorizing,Stoudenmire2016,izmailov18,yang24loretta},
and fluid dynamics \cite{kiffner2023tensor,peddinti2024quantum,holscher2025quantum,gourianov2025tensor}.

Although tensor networks provide a compact way of representing exponential-scale data, the cost of performing computations on a tensor network may still be prohibitive. To avoid this dilemma, researchers have proposed methods
for \emph{randomized dimension reduction}
of tensor networks~\cite{daas2021,kressner23streaming,bucci24,saibaba2025improved,cam25,peng26,meng2026recursive,tang25sketchtom,camano25,daas25,cazeaux2026linear}.
This approach applies a randomized embedding
to the high-dimensional tensor network data to produce a low-dimensional summary.
After the embedding, we can process the summary data with
traditional algorithms
to solve a range of computational problems.
This strategy has been very fruitful. Nevertheless, the existing randomized algorithms for tensor networks
often lack end-to-end guarantees, and several popular methods can
fail dramatically~\cite{meyer_hutchinsons_2025,meyer25,cam25}.

This paper advances the theory and practice of
randomized dimension reduction for tensor networks.
Our approach involves a type of random tensor network,
called a \emph{random matrix product state} (\rmps),
that is easily implemented with standard libraries for
tensor network computations.
Rakhshan \& Rabusseau \cite{rakhshan20,rakhshan2022}
introduced \rmps dimension reduction maps
and showed that they preserve the geometry of a finite point set.
We prove through a stronger argument that \rmps dimension reduction preserves the geometry of an entire \emph{subspace} of points, thereby unlocking the application to linear algebra.

Using \rmps dimension reduction, we develop a suite of efficient algorithms that provably
solve exponential-scale linear algebra problems. For matrix data with tensor network structure,
we obtain powerful new techniques for eigenvalue approximation,
trace estimation, and low-rank matrix approximation.
As applications, we demonstrate that these algorithms
can help us solve two problems from quantum many-body physics:
estimating the partition function for a system at finite temperature
and estimating the von Neumann entropy for an open quantum system.

\subsection{Empirical evidence: Randomized linear algebra at exponential scale}
\label{subsec:intro_empirical}

\ENEdone
\CCdone

This section outlines 
a stylized example of an exponential-scale problem that becomes accessible through randomized dimension reduction for tensor networks.
In quantum mechanics, we describe the energetic properties of $n$ interacting subsystems, each with local dimension $d$, by an exponentially large Hamiltonian matrix $\mH \in \bbC^{d^n\times d^n}$.
In typical applications, \mH has a concise description as a tensor network.
The core quantity governing the thermal properties of the system is the \emph{partition function}:
\begin{equation} \label{eq:partition-function}
Z(\beta)=\tr\bigl(\e^{-\beta \mH}\bigr) \quad \text{for given inverse temperature $\beta > 0$.}
\end{equation}
This section illustrates a technique for estimating the partition function~\eqref{eq:partition-function}
using \emph{stochastic trace estimation}.
The numerical experiments focus on the periodic transverse-field Ising model (PTFIM) Hamiltonian $\mH \in \bbR^{2^n\times 2^n}$, which is real-valued and has local dimension $d=2$ at each of $n$ sites; for details, see \cref{eq:ptfim}.

We can approximate the partition function~\eqref{eq:partition-function} using
a tensor network extension of the \XNysTrace algorithm; see \cref{subsec:xnystrace}.
The original matrix implementation of this algorithm stores the Hamiltonian matrix \mH as a dense or sparse array, and it estimates the partition function by approximating the action $\vomega \mapsto \e^{-\beta \mH}\vomega$ of the matrix exponential on dense, unstructured random vectors $\vomega \in \bbR^{d^n}$.
As \cref{fig:rnla_fails_at_scale} demonstrates, the usual approach rapidly becomes infeasible when the tensor order $n$ becomes large.
Merely storing the matrix \mH exceeds estimates of the total storage capacity of all enterprise data centers on Earth \cite{seagate24} once the system size reaches about \(n=40\) for dense storage and about \(n=70\) for sparse storage.

\begin{figure}
    \centering
\includegraphics[width=1\linewidth]{figs2/final_figs/numerical_results/Fig1.pdf}
\caption{\textbf{Exponential-scale linear algebra.}
    Computational resources to estimate the partition function $Z(\beta)=\tr(\e^{-\beta \mH})$ of the PTFIM Hamiltonian \(\mH\in\bbR^{2^n \times 2^n}\) \cref{eq:ptfim} with increasing order $n$
using the \XNysTrace algorithm implemented with dense, sparse (\texttt{CSR}), and tensor network data formats with both \rmps and Gaussian--Kronecker test vectors.
    From left to right, the panels report the memory, wall-clock time, and number of applications of the primitive $\vomega \mapsto \e^{-\beta \mH} \vomega$ required to estimate $Z(\beta)$ to 1\% relative error.
    Markers track the median over 10 trials, and shaded regions bound the $10\%$ and $90\%$ quantiles.
Dense and sparse matrix computations quickly expend the available memory, and
    Gaussian--Kronecker vectors show exponential growth in the number of evaluations.
    \rmps test vectors yield better results, requiring only $9$ minutes and $58$ evaluations to estimate the partition function when $n = 70$.
    See \cref{subsec:intro_empirical}.
}
\label{fig:rnla_fails_at_scale}
\end{figure}

Fortunately, we can do much better by exploiting tensor network structure.
In place of the unstructured random vectors used by the standard \XNysTrace algorithm, we use \rmps vectors (\cref{subsec:rmps}).
In place of the dense or sparse matrix formats, we represent the PTFIM Hamiltonian \mH using the \mpo tensor network format (\cref{subsec:mpo_mps}), and we use an off-the-shelf method known as TDVP1-GSE \cite{YA20} to compute the action $\vomega \mapsto \e^{-\beta \mH}\vomega$ on \rmps vectors.
See \cref{app:experiment_details} for more experimental details.

\Cref{fig:rnla_fails_at_scale} shows the results.
For tensor order $n = 70$, the \rmps-based implementation of \XNysTrace can estimate the partition function to 1\% relative error using megabytes of storage and
minutes
of computation.
By using \rmpss and efficient tensor network methods to store and apply the Hamiltonian \mH, we can extend the \XNysTrace algorithm to perform trace estimation at exponential scale.

As a baseline, we compare against the performance of Gaussian--Kronecker vectors that have the form $\vomega = \vphi^{(1)} \otimes \cdots \otimes \vphi^{(n)}$ with iid Gaussian $\vphi^{(i)} \sim \cN_{\bbR}(\vzero,\Id_2)$.
Gaussian--Kronecker vectors are widely used in the literature \cite{cam25,camano25,meyer_hutchinsons_2025,ahle20,daas25,meng2026recursive,saibaba2025improved,bujanovic25}, and they also enable us to implement \XNysTrace at exponential scale.
Even so, the Gaussian--Kronecker distribution suffers from a fatal flaw:
to achieve the same results as \rmps test vectors, the number of invocations of the map $\vomega \mapsto \e^{-\beta \mH}\vomega$ grows \emph{exponentially} in the tensor order $n$.
In several other contexts, our experimental work confirms the ineptitude of Kronecker-structured vectors for randomized linear algebra tasks; see \cref{subsec:krvecs} for discussion.

\subsection{Background: Tensor networks and matrix product states}
\label{subsec:background}
\RAMdone
\CCdone

The central object of our study is a particular type of tensor network
called either a \emph{matrix product state} (\mps) \cite{Baxter68,Fan92,Ost95,verstraete04mpdo,perez2006matrix}
or a \emph{tensor train} ($\mathsf{TT}$) \cite{Khoromskij2011OdlogNA,ose11}. 
We use the former term in this article.
Algebraically, an \mps is a tensor $\tX \in \bbF^{d\times d\times d\times \cdots \times d} \cong (\bbF^d)^{\otimes n}$ that admits the representation
\begin{equation}
\label{eq:mps}
\tX(i_1,\ldots,i_n)
= \sum_{\alpha_1=1}^{\chi}\cdots\sum_{\alpha_{n-1}=1}^{\chi}
  \tG_1(i_1,\alpha_1)
  \tG_2(\alpha_1,i_2,\alpha_2)\tG_3(\alpha_2,i_3,\alpha_3)\cdots 
  \tG_n(\alpha_{n-1},i_n).
\end{equation}
Here, $\bbF$ denotes the field of real (\(\bbF=\bbR\)) or complex (\(\bbF=\bbC\)) numbers.
The constituent tensors (``cores'') have dimensions \(\tG_1 \in \bbF^{d \times \chi}\), \(\tG_n \in \bbF^{\chi \times d}\), and \(\tG_p \in \bbF^{\chi \times d \times \chi}\) for $1 < p < n$.
The parameter $\chi \ge 1$ is called the \emph{bond dimension} or  the \emph{$\mathsf{MPS/TT}$ rank}, while the parameter $d\ge 1$ is called the \emph{physical dimension} or \emph{local dimension}.
The generalization to non-uniform bond dimensions $\chi_i$ or physical dimensions $d_i$ is possible, but this work does not pursue the extension.

The \mps representation is useful for exponentially large problems because it allows us to store a high-dimensional object with $d^n$ degrees of freedom using only $nd\chi^2$ parameters. 
Provided the bond dimension $\chi$ is small, the \mps representation of \tX can lead to significant reductions
in storage costs. Throughout this article, it is helpful to think of an \mps as either a $d\times\cdots\times d$ tensor \tX or as a vector \vx with dimension $d^n$.
We switch between these perspectives without comment.

To reason about matrix product states and other tensor networks, Penrose's \emph{tensor diagram notation} \cite{pen71} proves to be an immensely useful tool. 
In this notation, each tensor is depicted as a convex body with one edge emanating for each mode of the tensor. 
For instance, an order-3 tensor $\tT = [t_{ijk}]_{i,j,k=1}^d\in \bbF^{d\times d\times d}$
would have three edges, one for each index $i$, $j$, and $k$.
Using this notation, a vector, matrix and tensor may be represented as follows:
\vspace{-0.5em}
\begin{equation*}
    \tnshow{E0alt2}{.5} 
\end{equation*}

\begin{table}[t]
\centering
\caption{\textbf{(Penrose tensor diagrams)}. Examples of tensor diagrams for basic linear algebra operations.}

\renewcommand{\arraystretch}{1.35}
\setlength{\tabcolsep}{10pt}
\begin{tabular}{@{} l c c @{}}
\toprule
Problem & Algebraic formula & Tensor diagram \\
\midrule
Vector inner product
& $\langle \vu,\vv \rangle = \sum_{i=1}^d \bar{u_i} v_i$
& $\hspace*{-0.8em}\tnshow{E1}{0.177}$ \\[0.5ex]

Matrix--vector product
& $\mA\vv = \left[\sum_{j=1}^d a_{ij} v_j\right]_{i=1}^d$
& $\hspace*{-2.3em}\tnshow{E2}{0.2145}$ \\[0.5ex]

Matrix--matrix product
& $\mA\mB = \left[\sum_{k=1}^d a_{ik} b_{kj}\right]_{i,j=1}^d$
& $\hspace*{-0.6em}\tnshow{E3}{0.25}$ \\[0.5ex]

Matrix Kronecker product
& $\mA \otimes \mB = \left[a_{ij} b_{k\ell}\right]_{i,j,k,\ell=1}^d$
& $\hspace*{-.73em}\tnshow{E35}{0.255}$ \\[0.5ex]

\bottomrule
\end{tabular}
\label{tab:penrose-basics}
\end{table}

The next most fundamental operation in tensor diagram notation is \emph{contraction}, where two tensors are merged by multiplying the tensor elements and summing along one or more contracted edges.
Given an order-$t$ tensor $\tA \in (\bbF^d)^{\otimes t}$, an order-$s$ tensor $\tB \in (\bbF^d)^{\otimes s}$, and $r$ pairs of edges to contract, the result is a $(t+s-2r)$-mode tensor $\tC \in (\bbF^d)^{\otimes(t+s-2r)}$ with entries

\begin{equation*}
    \tC(\alpha_1,\ldots,\alpha_{t-r},\beta_1,\ldots,\beta_{s-r})
    = \sum_{\gamma_1=1}^{d}\cdots\sum_{\gamma_r=1}^{d} \tA(\alpha_1,\ldots,\alpha_{t-r},\gamma_1,\ldots,\gamma_r)\tB(\beta_1,\ldots,\beta_{s-r},\gamma_1,\ldots,\gamma_r).
\end{equation*}
In tensor diagram notation, contraction is denoted by joining the edges for the contracted modes:
\begin{equation*}
    \tnshow[2ex]{E33}{.32}
    =\tnshow[2.1ex]{E34}{.183} 
      \vspace{-1em}
\end{equation*}
In the trivial case when $r = 0$, no edges are drawn between the tensor bodies and no modes are contracted, resulting in the tensor product $\tC = \tA \otimes \tB$.
Throughout this article, we may label edges in a tensor diagram using either an index label (e.g., 
$i$ or $\alpha$) or the dimension (e.g., $d$ or $\chi$).
Index labels always emanate from an edge, while dimension labels appear adjacent to an edge.
\Cref{tab:penrose-basics} shows how to express several familiar operations from linear algebra diagrammatically as tensor contractions.
As another example, tensor contractions allow us to express the \mps representation \cref{eq:mps} graphically:
\begin{equation}
\label{eq:mps_diagram_intro}
  \tX = 
  \tnshow[5.9ex]{E5}{.525}
  \in \bbF^{d^n}.
  \vspace{-2.4em}
\end{equation}
Going forward, we appeal to tensor diagrams to simplify the presentation of otherwise unwieldy tensor contractions.
See \cite[$\mathsection$3.83]{Ballard25},\cite[$\mathsection$2]{BR17}, or \cite{Rakhshan26,taylor24,ahle24} for additional guidance on using tensor diagram notation for representing tensors and operations on them.

\subsection{Random matrix product states}
\label{subsec:rmps}
\RAMdone

Randomized dimension reduction offers a powerful tool
for solving large-scale linear algebra problems.
In a typical algorithm, we collect information about a large matrix $\mA \in \bbF^{N\times N}$ by computing its action $\mA\vomega_1,\ldots,\mA\vomega_{\ell}$ on a small family of random probe vectors $\vomega_1,\ldots,\vomega_{\ell}$.
It is common to choose the probe vectors iid from the standard Gaussian distribution $\cN_{\bbF}(\vzero,\Id_N)$, and we often aggregate the probe vectors into a \emph{test matrix} $\mOmega = k^{-1/2}[\vomega_1,\ldots,\vomega_{k}] \in \bbF^{N\times k}$.

We plan to design randomized linear algebra algorithms
for matrices
with exponential dimension $N = d^n$ that enjoy tensor network structure.
To that end, we ask:
\actionbox{\centering What is the ``right'' analogue of a standard Gaussian vector for tensor network computations?}

This paper proposes the \emph{random matrix product state} as a natural choice.  
Random matrix product states were first studied systematically in physics \cite{garnerone10statistical,garnerone10typicality,haag23typical,lancien22correlationlength,lami25anticoncentration,lami25clifford}, and they were first used for dimension reduction
by Rakhshan \& Rabusseau \cite{rakhshan20,rakhshan2022}.

\begin{definition}[\rmps and \rmps test matrix] \label{def:rmps}
    Fix the scalar field $\bbF$, the physical dimension $d$,
    the bond dimension $\chi$, and the tensor order $n$.
A \textit{random matrix product state} (\rmps) is an \mps, as in \eqref{eq:mps} or~\eqref{eq:mps_diagram_intro},
    whose cores $\tG_p$ have independent Gaussian elements drawn from
$\cN_{\bbF}(0,\sigma_p^2)$.
    When $1 < p < n$, the variance $\sigma_p^2 = 1/\chi$;
    when $p = 1$ or $p = n$, the variance $\sigma_p^2 = 1/\sqrt{\chi}$.
This \rmps vector is an element of $\bbF^{d^n}$.

    An \emph{\trp} is a random matrix of the form
    $$
    \mOmega \coloneqq \frac{1}{\sqrt{k}}
    \begin{bmatrix}
    \mid & \mid &        & \mid  \\
    \vomega_1 & \vomega_2 & \cdots & \vomega_k \\
    \mid & \mid &        & \mid  
    \end{bmatrix} = \frac{1}{\sqrt{k}}
    \begin{bmatrix}
    \tnshow{rmpsdemo}{.025} & \tnshow{rmpsdemo}{.025} & \cdots & \tnshow{rmpsdemo}{.025}\\
    \end{bmatrix}
    \in \bbF^{d^n \times k}.$$
The columns $\vomega_1,\ldots,\vomega_k \in \bbF^{d^n}$ are iid \rmps vectors.  By construction, the \trp is isotropic; see~\eqref{eq:isotropy}.
\end{definition}

To ensure that \rmps vectors and test matrices can serve in place
of their Gaussian counterparts,
how should we set the bond dimension $\chi$?
The main thesis of this paper can be summarized as follows:
\actionbox{\centering \textbf{Thesis (\emph{informal}).} \rmps vectors and test matrices work reliably in randomized \\ linear algebra algorithms if and only if the bond dimension $\chi \gtrsim n$.}
\noindent 
\Cref{subsec:fourth_moment_comparison,sec:rmps-moments}
assign a precise meaning to this thesis. What is its significance?
When $\chi \asymp n$, storing an \rmps vector requires only $\order(dn^3)$ numbers, dramatically less than the $\order(d^n)$ cost of storing a standard Gaussian vector.
Therefore, we recognize an opportunity to solve exponential-scale
linear algebra problems by designing algorithms that employ \rmps probe vectors. 

When $n$ is large---as is typical in quantum physics applications---the cost of storing an \rmps may still be burdensome.
Unfortunately, our analysis also shows that $\chi \gtrsim n$ is \emph{necessary} for \rmps vectors
to exhibit good performance,
at least in the worst case.
At the other extreme,
an \rmps with $\chi = 1$ is a Gaussian--Kronecker vector,
and
\Cref{fig:rnla_fails_at_scale} has already warned us that
Gaussian--Kronecker vectors behave poorly in some cases;
see \cref{subsec:krvecs} for discussion.

\subsection{Core aspects of the theoretical analysis}
\label{subsec:fourth_moment_comparison}

We can employ an \rmps test matrix \mOmega as a dimension reduction map
to compress a high-dimensional vector $\vx \in \bbF^{d^n}$
to a low-dimensional vector $\mOmega^*\vx \in \bbF^k$.
On average, the \rmps test matrix preserves the squared length of an arbitrary vector:
\begin{equation} \label{eq:isotropy}
    \E[\norm{\mOmega^*\vx}_2^2] = \norm{\vx}_2^2
    \quad\text{for each $\vx \in \bbF^{d^n}$.}
\end{equation}
This property is known as \emph{isotropy}.
The expression~\eqref{eq:isotropy} holds because
the covariance matrix of an \rmps \vomega equals
the identity matrix: $\E[\vomega\vomega^*] = \Id$.
Note that the covariance matrix tabulates all of the \emph{second-order moments} of the \rmps distribution.

To understand the dimension reduction properties of
an \rmps test matrix, it suffices to compute \emph{fourth-order moments} of an \rmps vector $\vomega$~\cite{tropp25,cam25},
which are collected in the \emph{fourth-moment tensor} $\tM \coloneqq \E[\vomega\vomega^* \otimes \vomega\vomega^*]$.
The main technical result of this paper is an exact formula for the fourth-moment tensor of an \rmps.
This formula requires some preparation, so we reserve the details until \cref{ssec:rmps_fourth_moment,sec:complex}. We can summarize the analysis as follows.

\actionbox{
    When the bond dimension \(\chi \ll n\), the fourth-moment tensor of a \rmps differs sharply from the fourth-moment tensor of a standard Gaussian vector. As \(\chi/n \to \infty\), the fourth-moment tensor of an \rmps approaches
    the fourth-moment tensor of a standard Gaussian vector.
}
\noindent 
This calculation substantiates the claim from \cref{subsec:rmps} that bond dimension $\chi \gtrsim n$ is \emph{necessary and sufficient} for an \rmps vector to serve in place of a standard Gaussian vector.

As an example, consider the task of estimating the trace of an exponentially large matrix $\mA \in \bbF^{d^n\times d^n}$ that
we access through the matrix--\mps product $\vomega \mapsto \mA \vomega$.
To estimate the trace, we can generate an \rmps and evaluate the quadratic form $\vomega^* (\mA\vomega)$.
The isotropy property $\E[\vomega\vomega^*] = \Id$ shows that this quadratic form provides an unbiased estimator for the trace:
\begin{equation*}
    \E[\vomega^* (\mA \vomega)] = \tr(\mA).
\end{equation*}
The fourth-moment formula for an \rmps
(\cref{ssec:rmps_fourth_moment,sec:complex})
yields a bound on the variance of the
quadratic form trace estimate:

\begin{theorem}[Variance of \rmps quadratic forms]
\label{thm:rmps_variance_intro}
    Let $\mA \in \bbF^{d^n\times d^n}$ be any square matrix, and let $\vomega$ be an \rmps with bond dimension $\chi$.
    When $\bbF = \bbR$, the variance of the quadratic form trace estimator satisfies
\begin{subequations} \label{eq:rmps-variance}
    \begin{equation}
        \Var(\vomega^\top\mA\vomega) \le 2 \left(1 + \frac{1}{\chi}\right)^{n-1} \norm{\mA}_{\rm F}^2 + \left[ 3\left(1 + \frac{2}{\chi}\right)^{n-1} - 2\left(1 + \frac{1}{\chi}\right)^{n-1} - 1\right] \norm{\mA}_*^2. \end{equation}
When $\bbF=\bbC$,
    \begin{equation}
        \Var(\vomega^*\mA\vomega) \le   \norm{\mA}_{\rm F}^2 + 2\left[ \left(1 + \frac{1}{\chi}\right)^{n-1} - 1\right] \norm{\mA}_*^2.
    \end{equation}
    \end{subequations}
These bounds are attained when $\mA$ is the matrix of all ones.
\end{theorem}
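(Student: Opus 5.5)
The plan is to compute $\E|\vomega^*\mA\vomega|^2$ exactly, using the fact that an \rmps is a multilinear function of independent Gaussian cores. Expanding entrywise,
\[
\E|\vomega^*\mA\vomega|^2=\sum A_{ij}\,\overline{A_{kl}}\;\E\bigl[\bar\omega_i\,\omega_j\,\omega_k\,\bar\omega_l\bigr].
\]
This is the fourth-moment tensor $\tM$ paired with $\mA\otimes\bar{\mA}$. Each entry $\omega_i$ is a contraction of the cores, so the four-fold product involves four copies of each core $\tG_p$. Since distinct cores are independent, I apply Wick's theorem core by core.
\begin{itemize}
\item In the real case, each core contributes a sum over the $3$ perfect pairings of its four copies.
\item In the complex case, only the $2$ pairings that match a conjugated copy with an unconjugated one survive.
\end{itemize}
A pairing at site $p$ does two things. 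It acts on the physical legs as a local operator $P_{\pi_p}$ on $(\bbF^d)^{\otimes 4}$: one of the identity/trace-type pairing, the swap, or (in the real case) the transpose-type pairing. It also contracts the bond legs.

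Summing over the bond indices between sites $p$ and $p+1$ produces closed loops. If $\pi_p=\pi_{p+1}$, the contraction forms two loops and gives $\chi^2$. If $\pi_p\neq\pi_{p+1}$, it forms one loop and gives $\chi$. Combined with the variances $\sigma_p^2\sigma_{p+1}^2$, this yields a transfer-matrix representation:
\[
\tM=\sum_{\pi_1,\dots,\pi_n}\;\prod_{p=1}^{n-1} w(\pi_p,\pi_{p+1})\;\bigotimes_{p=1}^n P_{\pi_p},
\qquad w(\pi,\pi)=1,\quad w(\pi,\sigma)=1/\chi \ (\pi\neq\sigma).
\]
The choice $\sigma_1^2=\sigma_n^2=1/\sqrt\chi$ is made precisely so that the boundary normalizations cancel. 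I expect this bookkeeping of loop counts and normalizations to be the main technical obstacle, and I would verify it with diagrams. With weight matrix $T=(1-\tfrac1\chi)\Id+\tfrac1\chi\mathbf{J}$ on $q$ states ($q=3$ real, $q=2$ complex), the total weight of all sequences is $\onevec^\top T^{n-1}\onevec=q\,(1+\tfrac{q-1}{\chi})^{n-1}$.

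Next I split the sequences into three classes and bound each class's contribution $c_\pi\defeq\langle \mA\otimes\bar{\mA},\bigotimes_p P_{\pi_p}\rangle$.
\begin{itemize}
\item \emph{All trace-type.} This sequence has weight $1$ and gives exactly $|\tr\mA|^2$, which cancels against the squared mean.
\item \emph{Never trace-type.} These sequences use only the swap pairing, plus the transpose-type pairing in the real case. Here $c_\pi=\langle \mA,\Pi(\mA)\rangle$ or $\langle\mA,\Pi(\mA)^{\top}\rangle$-type, where $\Pi$ is a partial transpose on some sites. Since $\Pi$ is a Frobenius isometry, Cauchy--Schwarz gives $|c_\pi|\le\norm{\mA}_{\rm F}^2$. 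In the real case these sequences live on $2$ states, so their total weight is $2(1+1/\chi)^{n-1}$. In the complex case only the constant swap sequence qualifies, with weight $1$.
\item \emph{Everything else.} I bound $|c_\pi|\le\norm{\mA}_*^2$. To do this I write the SVD $\mA=\sum_r s_r\vu_r\vv_r^*$ and expand bilinearly. For two rank-one terms, every contraction pattern of the four vectors $\vu_r,\vv_r,\vu_{r'},\vv_{r'}$ is bounded by $\norm{\vu_r}\norm{\vv_r}\norm{\vu_{r'}}\norm{\vv_{r'}}=1$: view it as an inner product of two unit-norm tensors in $(\bbF^d)^{\otimes 2n}$ after reordering legs, and apply Cauchy--Schwarz. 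Summing over $r,r'$ gives $(\sum_r s_r)^2=\norm{\mA}_*^2$.
\end{itemize}

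Assembling, the remaining weight in the real case is $3(1+\tfrac2\chi)^{n-1}-2(1+\tfrac1\chi)^{n-1}-1$. In the complex case it is $2(1+\tfrac1\chi)^{n-1}-2$. Together with the Frobenius coefficients above, these are exactly the coefficients in \eqref{eq:rmps-variance}.

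For equality, take $\mA=\onevec\onevec^*$. Every contraction pattern evaluates to $d^{2n}$, and $\norm{\mA}_{\rm F}^2=\norm{\mA}_*^2=|\tr\mA|^2=d^{2n}$. Hence every inequality above is tight.
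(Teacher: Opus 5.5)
Your proof is correct. It reaches the paper's bound by a more direct route.

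\textbf{Comparison with the paper.} Your transfer weights $w$ are exactly the paper's matrix $\mT$; in the real case, $\mT=(1-\tfrac1\chi)\Id+\tfrac1\chi\vone\vone^\top$. So the exact expansion of the fourth-moment tensor is the same in both arguments. The paper then reads the splitting $\mT=\tfrac1\chi\mT_1+(1-\tfrac1\chi)\mT_0$ probabilistically. It identifies $\tM$ as the fourth moment of a Gaussian--Kronecker vector whose factors come from cutting each bond independently with probability $1/\chi$. It imports the Meyer--Avron bound $2^t\norm{\mA}_{\rm F}^2+(3^t-2^t-1)\norm{\mA}_*^2$ for a $t$-fold product and averages using $\E[a^t]=a(1+(a-1)/\chi)^{n-1}$ and the law of total variance.

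You skip both the fractured-Gaussian reinterpretation and the imported theorem. You classify pairing sequences directly and sum weights via $\vone^\top T^{n-1}\vone$ on three or two states. Expanding $w=(1-\tfrac1\chi)\delta+\tfrac1\chi$ shows that your three classes are exactly Meyer--Avron's classes applied blockwise, so the coefficients must agree. Your version buys three things:
\begin{enumerate}
\item It is self-contained.
\item It treats the complex case uniformly, without a complex analogue of Meyer--Avron; the paper only says the rest is identical.
\item It verifies the tightness claim for $\mA=\vone\vone^*$ explicitly; the paper states this but never checks it.
\end{enumerate}
The paper's route buys the conceptual picture of an \rmps as a randomly fractured Gaussian, which is what explains the $\chi\gtrsim n$ threshold.

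\textbf{One justification needs repair.} This is your bound on the ``everything else'' class in the real case. Suppose a sequence uses all three pairing types. Then no split of $\{\vu_r,\vv_r,\vu_{r'},\vv_{r'}\}$ into two pairs has every contracted leg crossing between the halves, because each of the three possible splits is crossed by only two of the three pairing types. So that pattern is not literally an inner product of two tensor products of these vectors. The bound $|c_\pi|\le\norm{\mA}_*^2$ still holds, by either of two one-line fixes:
\begin{enumerate}
\item Over the trace-type sites, first contract $\vu_r$ with $\vv_r$ and $\vu_{r'}$ with $\vv_{r'}$. These partial contractions have Frobenius norm at most $1$, since $\norm{\mX\mY^\top}_{\rm F}\le\norm{\mX}_{\rm F}\norm{\mY}_{\rm F}$. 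Then apply Cauchy--Schwarz on the remaining legs.
\item Write $c_\pi=\langle \mB,\Pi(\mB)\rangle$, where $\mB$ is the partial trace of $\mA$ over the trace-type sites and $\Pi$ is a partial transpose. Then use $\norm{\mB}_{\rm F}\le\norm{\mB}_*\le\norm{\mA}_*$.
\end{enumerate}
The complex case, and the real sequences using only two pairing types, are fine exactly as you wrote them.
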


\begin{figure}[t]
    \centering
    \includegraphics[width=1\linewidth]{figs2/final_figs/numerical_results/Figure2.pdf}
    \caption{\textbf{Convergence of \rmpss to Gaussian behavior.}
    (\textit{Left})
    Normalized variance of quadratic form trace estimator \(\Var(\vomega^*\mA\vomega)/\tr(\mA)^2\) evaluated across \rmps bond dimension \(\chi\in[1,100]\).
    As the \rmps bond dimension \(\chi\) approaches \(n\), the variance of the \rmps quadratic form converges to that of a dense Gaussian random vector. (\textit{Right})
    Relative error \(\snorm{\mA-\hat\mA}_* / \snorm{\mA}_*\) for a randomized Nystr\"om approximation $\widehat{\mA}$
    as a function of approximation rank. In both panels, the matrix is chosen adversarially: \(\mA=\mQ\mQ^*+10^{-5}\mI\in\bbR^{2^{10}\times2^{10}}\)
    where \mQ consists of the first 20 columns of the orthogonal Walsh--Hadamard matrix.  Sample variances are computed using 500 random samples;
    the markers track the median over 20 trials; shaded regions are bounded by the $10\%$ and $90\%$ quantiles.
See \cref{subsec:fourth_moment_comparison}.
}
    \label{fig:convergence-to-gaussian}
\end{figure}

It is instructive to compare the variance bounds from \cref{thm:rmps_variance_intro}
with the variance of the quadratic form trace estimate
$\vg^*(\mA\vg)$ based on a standard Gaussian vector
\(\vg\sim \cN_{\bbF}(\vzero,\Id)\).
For an Hermitian matrix \(\mA\),
\begin{equation*}
    \E[ \vg^* \mA \vg] = \tr(\mA) \quad\text{and}\quad
    \Var(\vg^* \mA\vg) = \rK_\bbF\|\mA\|^2_{\rm F}
\end{equation*}
with $\rK_{\bbR} = 2$ and $\rK_{\bbC} = 1$.
For large enough $\chi\gtrsim n$,
the bounds \cref{eq:rmps-variance} simplify to
\[
    \Var(\vomega^\top \mA \vomega)
    \le
    \left(\rK_{\bbF}+\cO\!\left(\frac n\chi\right)\right)\|\mA\|_{\rm F}^2
    +
    \cO\!\left(\frac{n}{\chi}\right)
    \bigl(
        \|\mA\|_{\rm F}^2+\|\mA\|_*^2
    \bigr).
\]
In other words,
the variance of a \rmps quadratic form $\vomega^*\mA\vomega$ differs from its Gaussian counterpart $\vg^* \mA \vg$ by a factor that vanishes as $\chi/n \to \infty$.
See the left panel of \cref{fig:convergence-to-gaussian}
for numerical evidence.

The variance bound for quadratic forms (\cref{thm:rmps_variance_intro}) also yields error bounds for approximate least-squares regression and matrix low-rank approximation with \rmps dimension reduction; see \cref{sec:quadratic-form-condition}. As a numerical example,
the right panel of \cref{fig:convergence-to-gaussian} shows that the randomized Nystr\"om algorithm for low-rank approximation (\cref{sec:nystrom}) exhibits similar performance when implemented with Gaussian test matrices or with \rmps test matrices
that have sufficient bond dimension. 

\subsection{Roadmap}

\Cref{sec:related} summarizes related work.
\cref{sec:algs-applications} explains how to implement randomized linear algorithms at exponential scale using \rmps test matrices, and it evaluates these algorithms on stylized problems from quantum many-body physics.
\Cref{sec:rmps-moments} calculates the fourth-moment tensor of an \rmps
vector, and it derives variance bounds for quadratic form induced by an \rmps.
\Cref{sec:quadratic-form-condition} shows how the variance bound
supports the mathematical analysis of randomized algorithms based
on \rmps test matrices.

\subsection{Notation} \label{sec:notation}
\RAMdone
We work over a scalar field $\bbF$, which will be either $\bbR$ or $\bbC$.
A random variable from the complex normal distribution $\cN_{\bbC}(0,\sigma^2)$ takes the form $g_1+\iu g_2$ for independent $g_1,g_2 \in \cN_{\bbR}(0,\sigma^2/2)$.
The variance of a complex random variable is $\Var(x) = \E[|x|^2] - |\E[x]|^2$.
To state some results in generality, we define constants $\rK_{\bbR} = 2$ and $\rK_{\bbC} = 1$.
The adjoint of a matrix $\mA$ is written $\mA^*$.
We write \(\|\mA\|_{(p)}\) for the Schatten \(p\)-norm of a matrix $\mA$.
The Frobenius and nuclear norms are \(\norm{\mA}_{\rm F} \defeq \norm\mA_{(2)}\) and \(\norm{\mA}_{*} \defeq \norm\mA_{(1)}\).
The symbol $\lowrank{\mA}_r$ denotes an Eckart--Young--Mirsky best rank-$r$ approximation of the matrix $\mA$,
which is optimal simultaneously for all Schatten norms
but may not be uniquely determined.

We use the shorthand $[n]\coloneqq\{1,\ldots,n\}$.
Scalars are written in italic, such as $a\in\bbF$.  
Vectors are in bold lowercase,
while matrices are in bold uppercase,
such as $\vx\in\bbF^{n}$ or $\mX\in\bbF^{m \times n}$.
Tensors are written in a calligraphic font: $\tT \in \bbF^{d\times \cdots \times d}$.
Entries of vectors, matrices, and tensors
are expressed with functional notation
$\tT(i,j,k)$ or with subscripts $t_{ijk}$.
Throughout this article, we are flexible in treating a tensor $(\bbF^d)^{\otimes n}$ as either a $d\times \cdots \times d$ array or a vector of length $d^n$. 

For nonnegative quantities \(a,b\), we write \(a\lesssim b\) if \(a\leq \rC b\) for a universal constant \(\rC>0\). We write \(a\gtrsim b\) if \(b\lesssim a\), and \(a\asymp b\) if \(\rC_1b\leq a\leq \rC_2b\) for universal constants \(0<\rC_1\leq \rC_2<\infty\). \section{Related work} \label{sec:related}
\ENEdone
\CCdone

Dimension reduction for tensor networks is a young but expanding topic of interest.
This section compares several approaches that have been proposed in the literature.

\subsection{Random Kronecker vectors and Khatri--Rao dimension reduction}
\label{subsec:krvecs}
\ENEdone
\CCdone
The simplest construction for a structured high-dimensional random vector is the random Kronecker vector $\vomega = \vomega_1 \otimes \cdots \otimes \vomega_n \in \bbF^{d^n}$, obtained by taking the tensor product of independent random vectors $\vomega_i \in \bbF^d$, each drawn from an isotropic distribution, such as the standard Gaussian distribution.
To obtain a test matrix $\mOmega \in \bbF^{d^n\times k}$, we can stack random Kronecker vectors columnwise.
The resulting object is called a Khatri--Rao test matrix.
Random Kronecker vectors and Khatri--Rao test matrices are widely studied and deployed in algorithms \cite{cam25,camano25,daas25,feldman22,saibaba2025improved,bujanovic25,bujanovic2021norm,sun15,chen21,ahle20,Khanna_2018,meyer_hutchinsons_2025}.
They are closely connected to the present work because an \rmps with bond dimension $\chi = 1$ is a Gaussian--Kronecker vector,
and an \rmpstest with $\chi = 1$ is a Khatri--Rao test matrix.

Unfortunately, while random Kronecker vectors are easy to generate and manipulate, they suffer from a severe computational limitation known as \emph{overwhelming orthogonality} \cite{meyer25,cam25}.
For each tensor-structured vector $\vx = \bigotimes_{i=1}^n \vx_i$, the squared inner product is exponentially small
\begin{equation*}
    |\langle \vx,\vomega\rangle|^2 \le \rC^{-n} \norm{\vx}^2_2 \quad \text{for some universal constant } \rC > 1
\end{equation*}
except with exponentially small probability.  
As a consequence, randomized algorithms based on Kronecker vectors sometimes require resources that are exponential in the tensor order $n$.  This issue afflicts algorithms for trace estimation \cite{meyer_hutchinsons_2025,meyer25}, dimension reduction \cite{ahle20}, and other problems \cite{meyer25}.

Random Kronecker vectors sometimes perform well in practice despite their exponentially bad worst-case behavior \cite{cam25,camano25,saibaba2025improved,daas25}.
However, as the examples in this paper demonstrate, the deficiencies of random Kronecker vectors manifest in many applications; see the numerical experiments documented in \cref{fig:rnla_fails_at_scale,fig:convergence-to-gaussian,fig:GH_trace_estimation,fig:nystrom,fig:trace-comparison}.
This paper advocates for random \mpss with bond dimension $\chi > 1$ as a more powerful alternative to random Kronecker vectors.

\subsection{\rmpss: History and existing analysis} \label{sec:rakhshan-rabusseau}
\CCdone

Random matrix products have been used for decades in computational quantum many-body physics to initialize iterative algorithms like DMRG.
The statistical properties of random matrix product states were first systematically studied by researchers in theoretical quantum information and quantum many-body physics \cite{garnerone10statistical,garnerone10typicality,haag23typical,lancien22correlationlength,lami25anticoncentration,lami25clifford}.
These researchers study different random \mps constructions that have unit norm and, similar to us, provide evidence that 
a random matrix product state ``behaves randomly''
when the bond dimension that scales polynomially
in the system size.
Early works \cite{garnerone10statistical,garnerone10typicality} showed that $\chi \gtrsim n^2$ is sufficient for random matrix product states to achieve an equidistribution property known as \emph{quantum typicality}; the recent paper \cite{lami25anticoncentration} shows that $\chi \asymp n$ is sufficient for the \emph{inverse participation ratio} $\E[\sum_{i=1}^{d^n} |\omega(i)|^{2k}]$ to be close to the value for a uniformly random unit vector.
To the best of our understanding, the results in the physics literature are not sufficient to analyze randomized matrix algorithms for random \mpss with $\chi \asymp n$.

The \trp construction (\cref{def:rmps}) was introduced by Rakhshan \& Rabusseau \cite{rakhshan20,rakhshan2022} under the name \emph{tensorized random projection}.
They use \trps to embed high-dimensional, tensor-structured data into a lower dimensional space while approximately preserving distances (as in the Johnson--Lindenstrauss lemma).
For the analysis, they proved that real \rmpss satisfy the variance bound
\begin{equation} \label{eq:rr-bound}
    \Var\bigl((\vomega^\top \vx)^2\bigr) \le \left[3 \left( 1 + \frac{2}{\chi}\right)^{n-1} - 1\right] \norm{\vx}_2^4.
\end{equation}
The bound~\eqref{eq:rr-bound} also follows from \cref{thm:rmps_variance_intro} with the matrix $\mA = \vx\vx^\top$.
Our research extends Rakhshan \& Rabusseau's work in three  directions:
\begin{enumerate}
    \item We provide an exact computation of the fourth-moment tensor of an \rmps vector $\vomega$.
    This result leads to sharper bounds for the variance of the quadratic form $\vomega^\top \mA\vomega$.
    The improvement is essential for analyzing trace estimators (\cref{sec:trace-guarantees,sec:nystrompp-analysis}).
    \item We exploit our variance bound (\cref{thm:rmps_variance_intro}) to prove that \trps are \emph{subspace injections} (\cref{sec:osi}).  This result implies rigorous error bounds for low-rank approximation and least-squares regression algorithms implemented with \trps.
    \item We apply algorithms based on \trps to stylized problems in quantum many-body physics.
\end{enumerate}
In addition, our work adopts a technical approach distinct from that of Rakhshan \& Rabusseau.
Our analysis relies on tensor diagram manipulations and a probabilistic interpretation of the fourth-moment tensor.
Rakhshan \& Rabusseau, by contrast, develop an argument based on matricization and iterated computation of expectations.
The two strategies yield complementary insights.

\subsection{Sketching for tensors and tensor networks}
\CCdone
Numerous research articles propose and analyze dimension reduction maps for tensor-structured data,
and they investigate how these maps can accelerate tensor and tensor network computations; for example, see \cite{pham13,larsen2022,bharadwaj23,bharadwaj2024,ma22,ahle20,daas2021,cazeaux2026linear,sun15,camano25,daas25,kressner23streaming,bucci24,saibaba2025improved,meng2026recursive,tang25sketchtom,cam25}.
Some notable approaches include the TensorSketch \cite{pham13}, leverage score sampling \cite{larsen2022,bharadwaj23,bharadwaj2024}, and tree tensor network sketching \cite{ma22,ahle20,YZK25}.

A major technique similar to ours is known as \emph{tensor train sketching} \cite{daas2021,Hur23}.
Here,
the test matrix is chosen to be a single \emph{block \mps} (also known as a block tensor train), which has an additional leg of dimension $k$ on the last core:
\begin{equation} \label{eq:tt-sketching}
    \mOmega =\tnshow[5.4ex]{TTsketchdemo}{.525}
      \vspace{-2em}
\end{equation}
That is, tensor train sketching uses exactly one tensor network to form the entire test matrix.
In contrast, the \rmpstest is a collection of \(k\) independent \rmps tensor networks.
Tensor train sketching has been used to accelerate \mps/$\mathsf{TT}$ compression \cite{daas2021}, \mps/$\mathsf{TT}$ construction from sparse data \cite{Hur23}, and quantum Monte Carlo \cite{YZK25}.
Analysis of this construction appears in the concurrent work of Cazeaux et al.\ \cite{cazeaux2026linear}.

Several existing works use random tensor networks
for dimension reduction and obtain provable guarantees for downstream linear algebra tasks,
such as regression and low-rank approximation.
The paper of Ahle et al.\ \cite{ahle20}
studies dimension reduction for
polynomial kernel matrices in the context of machine learning.
Their construction uses a \emph{random tensor tree} in place of an \rmps.
Their theoretical results are incomparable with ours because they only consider dimension reduction for Kronecker-structured vectors \(\vx^{\otimes n}\),
but they develop stronger ``subspace embedding'' guarantees.
Our work has a further advantage because the \rmps construction is compatible with the wide set of software libraries for \mps/$\mathsf{TT}$ computations.

We also mention the concurrent research of Cazeaux et al.\ \cite{cazeaux2026linear}, who consider a new ``$\mathsf{TT}$Stack'' random test matrix obtained by concatenating many block \mpss \cref{eq:tt-sketching}.
Similar to our work, they conclude that bond dimension \(\chi \gtrsim n\) is required for several randomized linear algebra tasks, and they prove variance bounds for $\mathsf{TT}$Stacks that are similar to \cref{thm:rmps_variance_intro}.
The $\mathsf{TT}$Stack and \rmps test matrices have complementary applications; 
we leave a full empirical and theoretical comparison to future work.

 \section{Algorithms and applications} \label{sec:algs-applications}

This section presents algorithms that use \rmpss to solve linear algebra problems that arise in scientific computing. For each method, we develop implementations specialized to the tensor network setting, discuss applications in quantum many-body physics, and provide theoretical and experimental results.

\Cref{subsec:black_box,subsec:mps-artihmetic} formalize the computational environment and the basic operations we perform on \mpss.
In \cref{subsec:trace_estimation}, we discuss the \mps Girard--Hutchinson trace estimator.
Then, in \cref{sec:nystrom}, we introduce the \mps \GramNystrom algorithm low-rank approximation of a psd matrix, with applications to computing the top eigenvalues and von Neumann entropy of a reduced density matrix.
\Cref{sec:nystrompp} then discusses variance-reduced trace estimators (\mps \nystrompp and \mps \XNysTrace).

\actionbox{
\begin{center}
Code for all experiments and supporting tensor network algorithms can be found at \\
\url{https://github.com/chriscamano/TNrandNLA}
\end{center}
}

\subsection{Design methodology: Black-box access by matrix--\mps products}
\label{subsec:black_box}

Many standard linear algebra algorithms \cite[chs.~10--11]{GoVa13} interact with a matrix \mA only through the matrix--vector product operation $\vx \mapsto \mA\vx$.
The matrix--vector paradigm is powerful because it allows us to design algorithms that operate efficiently whenever we have a mechanism for computing \(\vx \mapsto \mA\vx\) quickly. 

Throughout this section, we consider an exponentially large matrix \(\mA\in\bbF^{d^n \times d^n}\) that is compactly represented and supports the efficient computation of \emph{matrix--\mps products}:

\actionbox{
    \textbf{Matrix--\mps access model.}
We interact with the matrix  $\mA \in \bbF^{d^n\times d^n}$ only through a subroutine (``oracle'') whose input is an \mps $\vx \in \bbF^{d^n}$ of bond dimension at most $\chi$ and whose output is $\mA\vx$, represented as an \mps of bond dimension at most $\bar\chi$.
}
\noindent
Matrix--\mps products account for the majority of the computational cost in all our algorithms.
As such, we view the matrix--\mps product as our computational atom, and we design algorithms with the goal of minimizing the number of products required. 

To simplify the presentation, this section assumes that the matrix \mA is square and self-adjoint ($\mA^* = \mA$).
Extensions to rectangular and non-self-adjoint matrices are often possible \cite{woodruff2014sketching,martinsson20randomized,cam25}; see \cref{sec:more-algs}.
Many of our algorithms involve an \emph{\mps-column matrix}, a tall matrix $\mX \in \bbF^{d^n\times k}$ whose columns \(\vx_1,\ldots,\vx_k\) are \mpss.
Given matrix--\mps access to \mA, we can apply it to an \mps-column matrix by applying the matrix column-wise:
\begin{equation*}
    \mA\mX = \bmat{ \mA \vx_1 & \cdots & \mA \vx_k }.
\end{equation*}
In particular,  when $\mOmega \in \bbF^{d^n \times k}$ is an \rmpstest, we can form $\mA\mOmega$ using \(k\) matrix--\mps accesses.
The next two subsections describe settings where this access model is appropriate.
\subsubsection{Access model, example 1: Applying a matrix product operator}
\label{subsec:mpo_mps}

\ENEdone \CCdone \RAMdone
The \mps representation provides a compact expression for an exponentially large vector $\vx \in \bbF^{d^n}$.
Analogously, the matrix product \emph{operator} (\mpo) representation \cite{verstraete04mpdo,Ose10} allows us to represent an exponentially large matrix $\mA \in \bbF^{d^n \times d^n}$.
The tensor diagram for an \mpo is
\begin{equation*}
    \mA =\tnshow[-2.5ex]{E6}{.45}\in \bbF^{d^n\times d^n}.
\end{equation*}
As with an \mps, the horizontal indices in an \mpo are called \emph{bond indices}, and their dimension is the \mpo bond dimension $D$.
If \mA is an \mpo of bond dimension $D$ and \vx is an \mps of bond dimension $\chi$, then the \mpo--\mps product $\mA\vx$ is an \mps of bond dimension $\bar\chi \le D\chi$.
There are many algorithms for evaluating the \mpo--\mps product \cite{verst04,Stoud10,ose11,ma2024,camano25,mil2026} and compressing it to an \mps of near-minimal bond dimension; see \cite[\S4.4]{camano25} for an overview and comparison.

\subsubsection{Access model, example 2: Imaginary time evolution}
\label{subsubsec:time_evo}

\ENEdone\RAMdone\CCdone
An important operator in quantum many-body physics is the \emph{imaginary time evolution operator} $\mA = \e^{-\tau \mH}$ associated with a self-adjoint \emph{Hamiltonian operator} $\mH \in \bbF^{d^n\times d^n}$.
Its action $\mA \vx = \e^{-\tau \mH}\vx$ encodes the solution $\vx(t)$ to the time-dependent Schr\"odinger equation 
\begin{equation*}
    \iu\frac{\d}{\d t}\vx(t)=\mH\vx(t) \quad \text{with initial condition }\vx(0)=\vx
\end{equation*}
at the \emph{imaginary} time value $t = -\iu \tau$.
There is a mature collection of tensor network algorithms that implement the action of the imaginary time evolution operator on an \mps \cite{verstraete04mpdo,White_2004,Dargel_2012,Zaletel_2015,Haegeman_2011,Haeg16,Paeckel_2019}.
In this work, we use the GSE-TDVP1 algorithm\cite{Haegeman_2011,Haeg16,YA20}.

\subsection{Arithmetic with \mpss}
\label{subsec:mps-artihmetic}

To solve exponentially large problems with tensor structure, this paper extends several algorithms originating in the matrix setting.
Unfortunately, many standard operations on matrices become expensive on \mpss.
Thus, to develop algorithms that are as efficient as possible, we design algorithms that use only three operations: matrix--\mps products, \mps--\mps inner products, and \mps linear combinations.

\paragraph{Inner products.}
Consider two \mpss \(\vx,\vy\in\bbR^{d^n}\) with respective bond dimensions $\chi, \chi'$.  Assuming that $\chi \geq \chi'$, the inner product $\langle \vx, \vy \rangle$ costs \(\cO(nd\chi^2\chi')\) arithmetic operations \cite[\S4.2]{ose11}.
Consequently, given two \mps-column matrices $\mX,\mY \in \bbF^{d^n\times k}$ whose columns respectively have bond dimensions $\chi, \chi'$ with $\chi \ge \chi'$, we can form the cross matrix $\mX^*\mY$ by entrywise evaluation in $\order(k^2nd\chi^2\chi')$ operations:
\begin{equation*}
    \mX^* \mY = \bmat{ \langle \vx_1, \vy_1\rangle & \cdots & \langle \vx_1, \vy_k\rangle \\ \vdots & \ddots & \vdots \\
    \langle \vx_k, \vy_1\rangle & \cdots & \langle \vx_k, \vy_k\rangle} \in \bbF^{k\times k}.
\end{equation*}

\paragraph{Linear combinations.}
Let \(\vx_1,\ldots,\vx_k\) be \mpss with bond dimensions \(\chi_1,\ldots,\chi_k\).
The linear combination
$
    \sum_{i=1}^k \alpha_i \vx_i
$
is an \mps of bond dimension at most \(\sum_{i=1}^k\chi_i\).
Thus, if $\mX\in\bbF^{d^n\times k}$ is an \mps-column matrix whose columns have bond dimension $\chi$ and $\mR\in\bbF^{k\times k}$ is a dense matrix,
then \(\mX\mR\) is an \mps-column matrix whose columns have bond dimension at most \(k\chi\).
In practice, we can often compress the columns of \(\mX\mR\) to a smaller bond dimension \cite{ose11,daas2021,daas25}. 
Even with compression, \mps linear combinations can be slow and memory intensive, so we design algorithms that avoid linear combinations when possible.

\subsection{Trace estimation via the \mps Girard--Hutchinson estimator}
\label{subsec:trace_estimation}

\CCdone
\RAMdone
This section presents an algorithm for approximating the trace of a square matrix \mA in the matrix--\mps access model.
In the non-tensor setting, \emph{stochastic trace estimators}~\cite{girard87,hutchinson89,meyer2021hutch++,epperly24trace,persson22,CH23a} are popular tools that estimate the trace from a small number of matrix--vector products using random vectors.
The simplest technique is the \emph{Girard--Hutchinson estimator} \cite{girard87,hutchinson89}, which takes the form
\begin{equation} \label{eqn:gh}
    \GH_\ell(\mA) = \frac1\ell \sum_{i=1}^\ell \vomega_i^* (\mA\vomega_i^{\vphantom{*}})
    \qquad
    \text{for iid copies } \vomega_1,\ldots,\vomega_\ell\sim \vomega \text{ of an isotropic random vector } \vomega.
\end{equation}
In the standard (non-tensor) setting, the Girard--Hutchinson estimator is typically implemented with unstructured random probe vectors,
such as draws from the distribution $\cN_{\bbF}(\vzero,\Id)$.
For any choice of isotropic probe vectors $\vomega_i$, the Girard--Hutchinson estimator is unbiased: \(\E[\GH_\ell(\mA)] = \tr(\mA)\).
The variance decreases slowly at the Monte Carlo rate $\Var(\GH_\ell(\mA)) \sim 1/\ell$, so
the Girard--Hutchinson estimator is primarily used with a small number of probe vectors $\ell$ to reach an estimate of modest quality.
To attain higher accuracy, variance reduction techniques can be quite helpful; see \cref{sec:nystrompp}.

Since generating a single dense Gaussian vector requires \(\cO(d^n)\) operations, we cannot directly apply the Girard--Hutchinson algorithm in the tensor network setting.
Instead, we propose choosing the random vectors $\vomega_1,\ldots \vomega_\ell$ to be \rmpss.
We refer to the resulting algorithm as the \mps Girard--Hutchinson estimator; see \cref{alg:mps-hutch}.
The dominant cost of the algorithm is $\ell$ matrix--\mps products.

\begin{algorithm}[t]
\caption{\textsc{Girard--Hutchinson} in the \mps access model}
\label{alg:mps-hutch}
\begin{algorithmic}[1]
\Require MPS matrix--vector access to psd operator \(\mA \in \bbF^{d^n \times d^n}\), number of queries \(\ell\)
\Ensure Trace estimate \(\GH_\ell(\mA) \approx \tr(\mA)\)

\State Draw independent \rmpss $\vomega_1,\ldots,\vomega_{\ell}$
\State Compute $\vy_1 = \mA\vomega_1,\ldots,\vy_\ell = \mA\vomega_\ell$
\Comment{\(\ell\) matrix--\mps products}
\State \Return
$\tfrac{1}{\ell} \sum_{i=1}^\ell \vomega_i^*\vy_i^{\vphantom{*}}$ \Comment{\(\ell\) \mps inner products}
\end{algorithmic}
\end{algorithm}

\subsubsection{Theoretical guarantees} \label{sec:trace-guarantees}

\ENEdone 
\RAMdone
\CCdone
For a standard Gaussian probe distribution \vomega, the variance of the Girard--Hutchinson estimator~\cref{eqn:gh} is 
\begin{equation} \label{eq:gaussian-gh}
    \Var(\GH_\ell(\mA)) = \frac{1}{\ell} \cdot \rK_{\bbF} \norm{\mA}_{\rm F}^2,
\end{equation}
where \(\rK_{\bbR}=2\) and \(\rK_{\bbC}=1\).
For an \rmps probe distribution, we can bound the variance of the associated Girard--Hutchinson estimator using our analysis of the quadratic form (\cref{thm:rmps_variance_intro}).

\begin{corollary}[Girard--Hutchinson estimator]
    \label{cor:hutch-frob-bound}
    Fix a matrix \(\mA\in\bbF^{d^n \times d^n}\), and let \(\GH_\ell\) denote the Girard--Hutchinson estimator~\eqref{eqn:gh} implemented using an \rmps probe distribution of bond dimension \(\chi\).
    Then  \begin{align*}
        \Var(\GH_\ell(\mA)) &\le \frac{1}{\ell} \cdot \left\{ 2 \left(1 + \frac{1}{\chi}\right)^{n-1} \norm{\mA}_{\rm F}^2 + \left[ 3\left(1 + \frac{2}{\chi}\right)^{n-1} - 2\left(1 + \frac{1}{\chi}\right)^{n-1} - 1\right] \norm{\mA}_*^2 \right\}; &&(\bbF=\bbR) \\
        \Var(\GH_\ell(\mA)) &\le \frac{1}{\ell} \cdot \left\{  \norm{\mA}_{\rm F}^2 + 2\left[ \left(1 + \frac{1}{\chi}\right)^{n-1} - 1\right] \norm{\mA}_*^2 \right\}. &&(\bbF=\bbC)
    \end{align*}
\end{corollary}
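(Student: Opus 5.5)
This corollary follows directly from \cref{thm:rmps_variance_intro} together with the elementary behavior of variance under averaging of independent random variables. The plan is as follows.

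First, write $X_i \coloneqq \vomega_i^*(\mA\vomega_i)$ for $i = 1,\ldots,\ell$. Because the probe vectors $\vomega_1,\ldots,\vomega_\ell$ are iid copies of an \rmps $\vomega$ of bond dimension $\chi$, the scalars $X_1,\ldots,X_\ell$ are iid copies of the quadratic form $X \coloneqq \vomega^*\mA\vomega$. We need $X$ to have finite second moment. This holds because each entry of $\vomega$ is a polynomial in finitely many Gaussian variables, so all moments are finite.

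Second, apply the variance of a sample mean. With the paper's convention $\Var(x) = \E[|x|^2] - |\E[x]|^2$ for complex scalars, independence makes the cross terms $\E[(X_i - \E X_i)\overline{(X_j - \E X_j)}]$ vanish for $i \neq j$. Hence
\[
\Var\Bigl(\tfrac1\ell \textstyle\sum_{i=1}^\ell X_i\Bigr) = \tfrac{1}{\ell^2}\sum_{i=1}^\ell \Var(X_i) = \tfrac{1}{\ell}\Var(\vomega^*\mA\vomega).
\]
This step uses only pairwise uncorrelatedness. It holds for arbitrary, possibly non-Hermitian, square $\mA$, which is why the corollary, like the theorem, is stated for any $\mA$.

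Third, substitute the bounds \eqref{eq:rmps-variance} from \cref{thm:rmps_variance_intro} for $\Var(\vomega^*\mA\vomega)$ in the real and complex cases respectively. This yields the two displayed inequalities.

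I expect no genuine obstacle here. The only point needing care is confirming that the complex variance convention is compatible with the additivity step. It is, because $\E|Y|^2$ for $Y = \sum_i (X_i - \E X_i)$ expands into the diagonal terms plus cross terms that vanish by independence.
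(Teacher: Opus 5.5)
Your proposal is correct and takes the same route as the paper. The paper derives the corollary by noting that averaging $\ell$ iid copies of the quadratic form divides its variance by $\ell$, then invoking the bounds of \cref{thm:rmps_variance_intro} for $\Var(\vomega^*\mA\vomega)$. Your additional checks are the finite second moment and the compatibility of the complex variance convention with additivity under independence. The paper leaves these implicit, and you handle them correctly.
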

\noindent Let us note a few features of this bound,
which are also visible in \cref{fig:GH_trace_estimation}.

\paragraph{The failure of Kronecker vectors.}
\RAMdone
Several authors \cite{avron14,ahle20,bujanovic2021norm,meyer_hutchinsons_2025} have studied the Girard--Hutchinson estimator where the random probe $\vomega$ follows the Gaussian--Kronecker distribution. This method coincides with the \rmps construction with $\chi = 1$. 
\Cref{cor:hutch-frob-bound} indicates that Gaussian--Kronecker probe vectors may induce an estimator whose variance is \emph{exponentially large} in the tensor order $n$, and worst-case examples confirm this fear \cite{meyer_hutchinsons_2025,ahle20}. 
Furthermore, under mild technical conditions, \emph{any} matrix--vector algorithm restricted to Kronecker probe vectors requires exponential work to approximate the trace of a general matrix \cite{meyer25}.
Thus, practitioners should avoid Kronecker probes ($\chi = 1$) for trace estimation and related tasks when $n$ is large.

\paragraph{Transition to Gaussian behavior.}
\RAMdone
\CCdone
The core insight of this paper is that an \rmps behaves somewhat like a Gaussian vector when $\chi \gtrsim n$.
\Cref{cor:hutch-frob-bound} supports this proposition for the Girard--Hutchinson trace estimator.
Indeed, for any scaling parameter \(\gamma > 0\), taking bond dimension \(\chi=\gamma n\) results in the bounds
\begin{align*}
    \Var(\GH_\ell(\mA)) 
    &\le \frac{1}{\ell} \cdot \left\{ 2\e^{\nicefrac{1}{\gamma}} \norm{\mA}_{\rm F}^2 + 3(\e^{\nicefrac{2}{\gamma}}-1)\norm{\mA}_*^2 \right\}; 
    &&(\bbF=\bbR) \\
    \Var(\GH_\ell(\mA)) &\le \frac{1}{\ell} \cdot \left\{ \norm{\mA}_{\rm F}^2 + 2(\e^{\nicefrac{1}{\gamma}}-1) \norm{\mA}_*^2 \right\}. 
    &&(\bbF=\bbC)
\end{align*}
When \(\chi\geq n\) so that \(\gamma\geq1\), all coefficients are bounded by an absolute constant independent of \(n\).
That is, we avoid the exponential cost that resulted from the choice \(\chi=1\).

As \(\chi\) grows larger, the term \(\e^{1/\gamma}\) approaches 1, making the variance bounds approach those of the standard Gaussian vector in \cref{eq:gaussian-gh}.
This may tempt us to increase \(\chi\) as a way to decrease the variance of this estimator.
However, the computational cost of the Girard--Hutchinson estimator scales linearly with the number of probe samples \(\ell\) and at least quadratically with \(\chi\).
Provided that \(\chi\gtrsim n\), increasing the number of samples \(\ell\) offers a more efficient strategy for reducing the variance.

\paragraph{\rmpss can still be much worse than standard Gaussians.}
\
\RAMdone
For Gaussian probes, the variance of the estimator scales only with the Frobenius norm of \mA, while the variance with \rmps probes scales with both the Frobenius norm and the nuclear norm.
When the singular values of \mA decay slowly, the nuclear norm of \mA may be as much as \(d^{n/2}\) times larger than the Frobenius norm, suggesting that \mps Girard--Hutchinson may achieve much higher error than its Gaussian counterpart in these cases.
Of course, standard Gaussian probes are not available for exponential-scale computations,
so the Gaussian baseline only has conceptual value. See \cref{subsec:app_hutch} for an empirical comparison of \rmpss to Gaussian probes.

\paragraph{Relative-error guarantees.} 
\ 
\RAMdone
Fix $\eps \in (0,1)$.
When the matrix \mA is psd, the \mps Girard--Hutchinson estimator achieves the following relative error guarantee
\begin{equation} \label{eq:worst-case-trace}
    |\hat{\tr} - \tr(\mA)| \le \varepsilon \tr(\mA) \quad \emph{\text{with probability }} \geq 2/3 \quad
    \emph{\text{ for all psd }} \mA
\end{equation}
using $\ell \asymp (1 + \rK_{\bbF} / \chi)^{n-1}/\varepsilon^2$ matrix--\mps products.
This result follows by combining \cref{cor:hutch-frob-bound} with Chebyshev's inequality and the norm comparison \(\norm{\mA}_{\rm F} \leq \norm{\mA}_{*}\).
We conclude that the sample complexity $\ell$ may be exponential in $n$ when the bond dimension $\chi$ is constant, but it is independent of $n$ when $\chi \gtrsim n$.
Choosing $\mA$ to be the $d^n\times d^n$ matrix of all ones shows that $\ell \asymp (1 + \rK_{\bbF} / \chi)^{n-1}/\varepsilon^2$ matrix--vector products is also \emph{necessary} for this algorithm to achieve the guarantee \cref{eq:worst-case-trace} for a worst-case matrix \mA.

\subsubsection{Example: Partition function calculations}
\label{subsec:app_hutch}

\begin{figure}
    \centering
\includegraphics[width=\linewidth]{figs2/final_figs/numerical_results/Figure3.pdf}
\caption{
\textbf{
\rmps Girard--Hutchinson: Dependence on spectral decay.} \mps Girard--Hutchinson estimator (\cref{alg:mps-hutch})
used to estimate to the partition function \(Z(\beta)=\tr(\e^{-\beta \mH})\) of the classical 1D Ising Hamiltonian \cref{eq:open_ising} with $n=50$ sites and \(\beta\in\{0.1,1,10\}\). The markers track the median relative error over 60 trials; shaded regions connect the $25\%$ and $75\%$ quantiles.
The black line \((\blackdashedline)\) shows the predicted behavior for standard Gaussian probes, and the red line \((\reddashedline)\) illustrates the Monte Carlo error decay \(1/\sqrt{\ell}\).
See \cref{subsec:app_hutch} for interpretation and conclusions.
}
\label{fig:GH_trace_estimation} 
\end{figure}

\RAMdone 
To illustrate the performance of the \mps Girard--Hutchinson trace estimator,
we compute estimates for the partition function $Z(\beta) = \tr(\e^{-\beta \mH})$
of the classical 1D Ising model with Hamiltonian
\begin{equation}
\label{eq:open_ising}
  \mH=-\sum_{i=1}^{n-1}\mZ_i\mZ_{i+1}\in\bbC^{2^n\times 2^n}
\qquad
\text{with }\mZ_i=\mI_2^{\otimes(i-1)}\otimes
\begin{bmatrix}
1&0\\
0&-1
\end{bmatrix}
\otimes\mI_2^{\otimes(n-i)}.  
\end{equation}
Since $\e^{-\beta\mH}$ can be written explicitly in terms of hyperbolic trigonometric functions \cite{ising25}, the matrix $\e^{-\beta\mH}$ admits an \mpo representation with bond dimension $D=2$, facilitating fast matrix--\mps products.
We instantiate this matrix with tensor order \(n=50\) at three inverse temperatures \(\beta\in\{0.1,1,10\}\).
Larger values of \(\beta\) result in matrices \(\e^{-\beta \mH}\) with faster eigenvalue decay, as evident in the following norm ratios:
\[
    \frac{\norm{\e^{-\beta \mH}}_{\rm F}}{\norm{\e^{-\beta \mH}}_*} \approx \begin{cases}
        4 \cdot 10^{-8}, & \beta=0.1 \\
        2 \cdot 10^{-3}, & \beta=1 \\
        0.7, & \beta = 10.
    \end{cases}
\]
Decreasing $\beta$ makes the spectrum flatter, resulting in a trace estimation problem that it is easier for the Girard--Hutchinson method.

For this partition function computation, \Cref{fig:GH_trace_estimation} shows the performance of the \mps Girard--Hutchinson trace estimator~\eqref{eqn:gh}
with varying bond dimensions $1\le \chi \le 100$.
The estimator always fails when $\chi = 1$, but the accuracy improves as $\chi$ increases.
For sufficiently large $\chi$, the error in the trace estimates follows a $C_{\chi,\beta}/\sqrt{\ell}$ convergence profile for a prefactor $C_{\chi,\beta} > 0$ that decreases in both $\chi$ and $\beta$.
We observe that increasing the bond dimension $\chi$ always improves the quality of the estimator.
The dashed black line indicates the predicted relative error~\cref{eq:gaussian-gh} for standard Gaussian probes, although that computation is infeasible.
For a flat spectrum ($\beta = 0.1$), \rmpss suffer errors that are orders of magnitude worse than a Gaussian probe would achieve.  When the spectrum decays more quickly ($\beta = 10$), their performance is comparable.

\subsection{Low-rank approximation via the \mps Nystr{\"o}m method}
\label{sec:nystrom}

In many applications, we encounter matrices that are well-approximated by a low-rank matrix~\cite{UT19}.
For these matrices, we can construct a low-rank approximation to compress the data
or to estimate dominant eigenvalues, eigenvectors, singular values, or singular vectors \cite[\S4]{tropp2023}.
This section develops an efficient algorithm that reports a low-rank approximation of a
psd matrix $\mA \in \bbF^{d^n\times d^n}$ with exponential dimension under the matrix--\mps
access model.
\Cref{sec:more-algs} comments on extensions to non-psd matrices.

The \emph{randomized Nystr\"om method} is a well-established algorithm for computing a low-rank approximation of a psd matrix \cite{Williams00,gittens16,tropp17b}.
Given a psd input matrix \(\mA\in\bbF^{N\times N}\) and a \emph{sketching dimension} $k$, the standard algorithm draws a random test matrix \(\mOmega\in\bbF^{N\times k}\), computes the sketch \(\mY=\mA\mOmega \in \bbF^{N\times k}\), forms the core matrix \(\mC=\mOmega^* \mY\in\bbF^{k\times k}\), and builds the following approximation to the matrix \mA:
\begin{equation}
    \hat\mA
    = \mA \langle\mOmega\rangle
    \defeq \mY\mC^+\mY^*
    = \mA\mOmega(\mOmega^*\mA\mOmega)^+\mOmega^*\mA.
    \label{eq:nystrom-basic}
\end{equation}
Most implementations report the eigenvalue decomposition of the approximation:
\begin{equation} \label{eq:nystrom-factored}
    \mA\langle\mOmega\rangle = \mU\mLambda\mU^*.   
\end{equation}
Here, $\mU \in \bbF^{N\times k}$ is a tall orthonormal matrix whose columns are the eigenvectors of $\mA\langle\mOmega\rangle$, and $\mLambda \in \bbR_+^{k\times k}$ is the diagonal matrix that lists the associated eigenvalues.

To construct the Nystr\"om approximation of an exponentially large psd matrix $\mA$,
we advocate choosing \mOmega to be an \rmps test matrix.
Even with an \rmps test matrix, existing implementations of the Nystr\"om approximation \cite{szlam14,tropp2017a}
are not tractable for \mps data because of how they manipulate \mps-column matrices.
To resolve this issue, we propose a new implementation of the Nystr\"om approximation, called \GramNystrom, that is efficient in the \mps setting.

\subsubsection{The \GramNystrom algorithm for fast, numerically reliable \mps Nystr\"om approximation} \label{sec:nystrom-impl}

\RAMdone
\CCdone
To compute the Nystr\"om approximation \cref{eq:nystrom-basic} in factored form \cref{eq:nystrom-factored}, we propose the following \mps \GramNystrom method (\cref{alg:mps-nystrom}).
Conceptually, the algorithm proceeds in four steps. 
\begin{enumerate}
    \item Compute the sketch \(\mY = \mA\mOmega\) using \(k\) matrix--\mps products.
    \item Construct the core matrix \(\mC = \mOmega^*\mY\) and the Gram matrix \(\mS = \mY^*\mY\) using \mps inner products.
    \item Compute the Cholesky factorization \(\mC=\mR\mR^*\) and eigendecomposition \(\mR^{-*}\mS\mR^{-1} = \mV\mLambda\mV^*\)
    \item \emph{(Optional)} Compute \(\mU = \mY(\mR^{-1}\mV\mLambda^{-1/2})\) via \mps linear combinations. 
\end{enumerate}
With some algebra, we can verify that \(\mU\mLambda\mU^* = \mA\langle\mOmega\rangle\) in exact arithmetic.
The fourth step is the most expensive because it requires \mps linear combinations,
but we can eliminate this step if we only need the eigenvalue matrix \mLambda. If \mOmega is an \rmps test matrix with bond dimension \(\chi\), then eigenvalue extraction via \(\mps\) \(\GramNystrom\) costs \(k\) matrix--\mps products plus \(\cO(k^2nd\,\smash{\overline\chi}^3+k^3)\) additional operations.
We remind the reader that $\overline{\chi}$ is the bond dimension of the matrix--\mps product $\mA\vomega$.

In principle, these four steps could be used to form the Nystr\"om approximation induced by any random test matrix \mOmega,
but this sequence of steps is particularly well-suited to the \mps setting.
Indeed, this procedure manages to avoid the SVD of a tall $d^n\times k$ matrix that
arises in most Nystr{\"o}m implementations;
we achieve the same effect through the eigendecomposition of a small $k\times k$ matrix.

\begin{algorithm}[t]
\caption{\GramNystrom: Nystr\"om approximation in the \mps access model }
\label{alg:mps-nystrom}
\begin{algorithmic}[1]
\Require MPS matrix--vector access to psd matrix $\mA\in\bbF^{N\times N}$, embedding dimension $k$, flag $\texttt{just\_eigs}$
\Ensure 
Factors \(\mU,\mLambda\) of the Nystr\"om approximation \(\widehat\mA = \mU\mLambda\mU^*\), or just the eigenvalue matrix \(\mLambda\)

\State Draw a Gaussian \trp
$
\mOmega\in\bbF^{N\times k}
$
\State $\mY \gets \mA\mOmega$
\Comment{\(k\) matrix--\mps products}

\State $\mG \gets \mOmega^\ast\mOmega,\ \mC \gets \mOmega^\ast\mY$
\State $\mT \gets \texttt{chol}(\mG,\texttt{'upper'})$
\State $\mB \gets \mT^{-\ast}\mC\mT^{-1}$ 
\Comment{Apply inverses via triangular solves}
\State $\nu \gets \max\bigl(0,\sqrt{\varepsilon_{\mathrm{mach}}}\,\lambda_{\max}(\mB)-\lambda_{\min}(\mB)\bigr)$
\Comment{Calculate shift for stability}

\State $\mR \gets \texttt{chol}(\mC+\nu\mG,\texttt{'upper'})$
\Comment{\(\mC+\nu\mG = \mOmega^*\mA_\nu\mOmega\)}

\State $\mS \gets \mY^\ast\mY+2\nu\mC+\nu^2\mG$
\Comment{$\mS=\mOmega^\ast\mA_\nu^2\mOmega=(\mY+\nu\mOmega)^\ast(\mY+\nu\mOmega)$}
\State $\mE \gets \mR^{-\ast}\mS\mR^{-1}$
\Comment{Apply inverses via triangular solves}
\State $[\mV,\mLambda] \gets \texttt{eigh}(\mE)$
\State $\mLambda \gets \max(\mLambda-\nu\mI,0)$
\Comment{Undo diagonal shift}

\State \textbf{if} $\texttt{just\_eigs}$ \textbf{then return }\mLambda

\State $\mY \gets \mY + \nu \mOmega$
\Comment{Optionally compress columns of $\mY$}
\State $\mU \gets \mY(\mR^{-1}\mV\mLambda^{-1/2})$
\Comment{\mps-column matrix--matrix product}
\State \Return $\mU,\mLambda$
\end{algorithmic}
\end{algorithm}

The four steps above can fail dramatically in finite-precision arithmetic when the core matrix \mC is approximately low-rank.
To improve the stability of \mps\GramNystrom, we follow a standard practice used in the matrix setting \cite{szlam14,tropp2017a} and apply a numerical shift.
Rather than approximating \mA directly, we approximate the shifted matrix \(\mA_{\nu} \defeq \mA + \nu\mI\) for a shift parameter \(\nu\) proportional to the square root of the machine precision \(\eps_{\rm mach}\).
Afterward, we subtract \(\nu\) from the computed eigenvalues.
In \cref{alg:mps-nystrom}, we use the shift \(\nu = \sqrt{\eps_{\rm mach}} \lambda_{\max}(\mB) - \lambda_{\min}(\mB)\) where
\begin{equation*}
    \mB = \mT^{-*} (\mOmega^*\mY) \mT^{-1} \quad \text{where } \mT^*\mT = \mOmega^*\mOmega \text{ is a Cholesky factorization.}
\end{equation*}
This shift is chosen to ensure that \mB and \mS are always numerically positive definite.
Since \GramNystrom requires a shift of size \(\cO(\sqrt{\eps_{\rm mach}})\),
it can only approximate eigenvalues up to about 8 digits of accuracy in double precision (\(\eps_{\rm mach} \approx 10^{-16}\)), where traditional Nystr\"om methods can reach 15 or 16 digits. While \mps \GramNystrom is faster than standard Nystr\"om implementations, it is somewhat less accurate.

\subsubsection{Theoretical guarantees}
\RAMdone
\CCdone
The \mps Nystr{\"o}m method is equipped with rigorous guarantees on the
quality of approximation.
These results demonstrate that \trps achieve guarantees similar to Gaussian test matrices \cite[Cor.~8.8]{tropp2023}, which are the accepted standard in the matrix setting \cite{tropp2017a,halko11,epperly26sharp}.
Here is our main theorem:
\begin{theorem}[Nystr\"om approximation] \label{thm:nystrom}
    Fix a psd matrix \(\mA \in \bbR^{d^n \times d^n}\) and target rank \(r \leq d^n\).
    Let $\mOmega \in \bbR^{d^n \times k}$ be an \rmpstest with embedding dimension \(k \gtrsim (1+\nicefrac{\rK_{\bbF}}\chi)^{n-1}r\), and introduce the Nystr\"om approximation \(\hat\mA = \mA\langle\mOmega\rangle\).
    Recall that \(\lra \mA r\) denotes a best rank-\(r\) approximation to \mA in the sense of Eckart--Young--Mirsky.
    Then the following bounds hold with probability at least \(0.99\):
    \begin{enumerate}[label=(\alph*)]
        \item 
        \label{item:trp-nystrom-trace-bound}
        \textbf{\textit{Nuclear norm error.}} \(\snorm{\mA - \hat\mA}_* \lesssim \snorm{\mA - \lra \mA r}_*\).
        \item
        \label{item:trp-nystrom-frob-bound}
         \textbf{\textit{Frobenius norm error.}} \(\snorm{\mA - \hat\mA}_{\rm F}^2 \lesssim \snorm{\mA - \lra\mA r}_{\rm F}^2 + \frac1r \norm{\mA - \lra \mA r}_*^2\).
    \end{enumerate}
    In particular, if \(\chi \gtrsim n\) then embedding dimension \(k \gtrsim r\) suffices to achieve these guarantees.
\end{theorem}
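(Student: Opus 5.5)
We follow the standard route for Nyström guarantees, in which the Nyström error is controlled by the error of a randomized range-finder built from the square root of $\mA$. The plan is to reduce that bound to two properties of the \rmps test matrix, and to derive both properties from the variance bound of \cref{thm:rmps_variance_intro}.

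\textbf{Step 1: reduction to a square root.} Write $\mA = \mA^{1/2}\mA^{1/2}$ and let $\mP$ denote the orthogonal projector onto the range of $\mA^{1/2}\mOmega$. The classical identity gives
\[
\mA - \mA\langle\mOmega\rangle = \mA^{1/2}(\Id - \mP)\mA^{1/2}.
\]
Hence $\snorm{\mA-\hat\mA}_* = \snorm{(\Id-\mP)\mA^{1/2}}_{\rm F}^2$. For the Frobenius norm we also have $\snorm{\mA-\hat\mA}_{\rm F} \le \snorm{(\Id-\mP)\mA^{1/2}}_{(4)}^2$. It therefore suffices to bound range-finder errors for $\mB \defeq \mA^{1/2}$ in the Schatten $2$- and $4$-norms, since $\snorm{\mB - \lra{\mB}{r}}_{\rm F}^2 = \snorm{\mA-\lra\mA r}_*$.

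\textbf{Step 2: the standard deterministic bound.} Let $\mV_r$ hold the top $r$ eigenvectors of $\mA$, let $\mV_\perp$ hold the rest, and set $\mOmega_1 = \mV_r^*\mOmega$ and $\mOmega_2 = \mV_\perp^*\mOmega$. The Halko--Martinsson--Tropp bound gives, whenever $\mOmega_1$ has full row rank,
\[
\snorm{(\Id-\mP)\mB}^2 \le \snorm{\mB_\perp}^2 + \snorm{\mB_\perp \mOmega_2 \mOmega_1^+}^2,
\qquad \mB_\perp \defeq \mB\mV_\perp.
\]
We need two random facts.

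(i) \emph{Subspace injection.} With constant probability, $\sigma_{\min}(\mOmega_1)^2 \gtrsim 1$.

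(ii) \emph{Isotropic moment control.} For any fixed matrix $\mM$, we have $\E\snorm{\mM\mOmega}_{\rm F}^2 = \snorm{\mM}_{\rm F}^2$. This follows from isotropy \cref{eq:isotropy}, and Markov's inequality then controls the size.

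For fact (i), fix a unit vector $\vx$ in $\mathrm{range}(\mV_r)$. Then $\snorm{\mOmega^*\vx}_2^2 = k^{-1}\sum_i \vomega_i^*(\vx\vx^*)\vomega_i$ has mean $1$. By \cref{thm:rmps_variance_intro} applied to the rank-one matrix $\vx\vx^*$, its variance is at most $(1+\rK_{\bbF}/\chi)^{n-1}\cdot\order(1)/k$. Lower-tail bounds for nonnegative quadratic forms driven only by second moments then give the lower bound. These include the Paley--Zygmund or Oliveira/Tropp-style argument: a lower bound on the minimum eigenvalue of a sum of iid psd matrices $\mV_r^*\vomega_i\vomega_i^*\mV_r$, using a matrix variance proxy. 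The result is $\lambda_{\min}(\mOmega_1\mOmega_1^*) \ge 1/2$ with probability $0.999$ once $k \gtrsim (1+\rK_{\bbF}/\chi)^{n-1} r$. Controlling the matrix variance proxy $\snorm{\E[(\mV_r^*\vomega\vomega^*\mV_r)^2]}$ is where the full quadratic-form variance bound, with its nuclear-norm term, enters: the nuclear norm of the projector $\mV_r\mV_r^*$ is $r$, which produces the factor $r$.

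\textbf{Step 3: the cross term.} We then bound $\snorm{\mB_\perp\mOmega_2\mOmega_1^+}^2 \le 2\snorm{\mB_\perp\mOmega_2}^2$ on the event from Step 2. Markov's inequality together with (ii) gives, with probability $0.999$,
\[
\snorm{\mB_\perp\mOmega_2}_{\rm F}^2 \lesssim \snorm{\mB_\perp}_{\rm F}^2 = \snorm{\mA - \lra\mA r}_*.
\]
This yields part (a).

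For part (b), we need $\snorm{\mB_\perp\mOmega_2}_{(4)}^4 = \snorm{\mOmega_2^*\mB_\perp^*\mB_\perp\mOmega_2}_{\rm F}^2$. We bound its expectation via the fourth moment, which is again \cref{thm:rmps_variance_intro}. The diagonal terms (same column) contribute $k^{-2}\sum_i \E|\vomega_i^*\mA_\perp\vomega_i|^2$. These are controlled by $(1+\rK/\chi)^{n-1}\snorm{\mA_\perp}_*^2/k$, and using the choice of $k$ this is at most $\snorm{\mA_\perp}_*^2/r$. The off-diagonal terms contribute about $\snorm{\mA_\perp}_{\rm F}^2$. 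Markov's inequality then gives the stated Frobenius bound, and a union bound over the three events gives probability $0.99$.

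\textbf{Main obstacle.} The main obstacle is Step 2(i): establishing the subspace injection with only $k \gtrsim (1+\rK/\chi)^{n-1} r$ samples, using nothing beyond fourth moments. I would try the known result that isotropic vectors whose quadratic forms obey a variance bound of the form $\alpha\snorm{\mM}_{\rm F}^2 + \beta\snorm{\mM}_*^2$ yield a subspace injection with $k \asymp (\alpha+\beta) r$ (the Tropp ``quadratic-form condition'' framework). I would then verify its hypotheses against \cref{thm:rmps_variance_intro} restricted to psd $\mM$ supported on the $r$-dimensional subspace.
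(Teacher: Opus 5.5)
Your proposal is correct and follows essentially the paper's route. The Gram correspondence reduces the Nystr\"om errors to randomized-SVD errors for $\mA^{1/2}$ in the Schatten $2$- and $4$-norms; the structural bound plus subspace injectivity gives $\snorm{(\mV_1^*\mOmega)^+}^2 \le 2$; and the Schatten-$4$ cross term is handled by splitting $\E\snorm{\mOmega^*(\mA-\lra{\mA}{r})\mOmega}_{\rm F}^2$ into diagonal terms (quadratic-form variance) and off-diagonal terms (isotropy), followed by Markov's inequality.

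The only differences are minor. The paper gets injectivity exactly as in your last paragraph: it takes $\vx\vx^*$ in the variance bound to obtain $\E|\langle \vx,\vomega\rangle|^4 \le 1+a+b$ and then invokes Tropp's fourth-moment theorem. It also simply cites \cite[Cor.~2.5]{cam25} for part (a), whereas you rederive it from the Frobenius structural bound and isotropy. Finally, the matrix-variance-proxy sketch in your Step 2(i) would on its own typically cost an extra $\log r$ factor. So the log-free fourth-moment injectivity theorem you fall back on is genuinely needed to reach $k \gtrsim (1+\rK_{\bbF}/\chi)^{n-1} r$.
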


We establish this theorem in \cref{sec:nystrom-analysis}.
The nuclear norm bound (\cref{thm:nystrom}\ref{item:trp-nystrom-trace-bound}) can be deduced from existing results in the literature.
The Frobenius norm bound (\cref{thm:nystrom}\ref{item:trp-nystrom-frob-bound}) is new; it relies on the variance bound for quadratic forms (\cref{thm:rmps_variance_intro}).
The nuclear-norm term in the right-hand side of this bound appears to be necessary \cite{tropp2023,persson22}.
The new Frobenius-norm guarantee plays an essential role in the analysis of variance-reduced trace estimators; see \cref{sec:nystrompp,sec:nystrompp-analysis}.

\subsubsection{Application: Quantum entropy estimation} \label{sec:entropy}

\RAMdone
We illustrate the performance of the \mps\GramNystrom method (\cref{alg:mps-nystrom}) on a stylized problem, where we estimate the \emph{von Neumann entropy} of a reduced density matrix.
Estimating von Neumann entropy and other measures of entanglement is a core topic in quantum many-body physics and quantum information theory \cite{Brydges19,huang2020predicting,feldman22}.

To formalize this problem, consider a quantum state $\vpsi\in \bbC^{N_{\rm A}}\otimes \bbC^{N_{\rm B}}$ describing two possibly interacting subsystems $\rm A$ and $\rm B$.
The von Neumann entropy is
\begin{equation}
\label{eq:vn_entropy}
S(\mrho_{\rm A})=-\tr\bigl(\mrho_{\rm A}\log(\mrho_{\rm A})\bigr)
=-\sum_{i=1}^{N_{\rm A}}\lambda_i(\mrho_{\rm A})\log\bigl(\lambda_i(\mrho_{\rm A})\bigr).
\end{equation}
We instate the convention $0\log 0 = 0$.  The \emph{reduced density matrix}
is defined as
$$
\mrho_{\rm A}=\tr_{\rm B}\bigl(\vpsi\vpsi^*) \in \bbC^{N_{\rm A}\times N_{\rm A}}.
$$
It is generated using the partial trace operation $\tr_{\rm B} \colon \bbC^{(N_{\rm A} N_{\rm B})\times (N_{\rm A} N_{\rm B})} \to \bbC^{N_{\rm A}\times N_{\rm A}}$.

Suppose that we have matrix--\mps access to $\mrho_{\rm A}$, and we use that access to build an \mps\GramNystrom approximation $\hat{\mrho}_{\rm A}\approx \mrho_{\rm A}$.  
This immediately suggests we use the approximation
\begin{equation*}
    S(\mrho_{\rm A}) \approx S(\hat{\mrho}_{\rm A}\,) = -\sum_{i=1}^{N_{\rm A}} \lambda_i(\hat{\mrho}_{\rm A}\,) \log(\lambda_i(\hat{\mrho}_{\rm A}\,)).
\end{equation*}
In reference to the work of Persson et al.\ \cite{persson25,persson23funNystrom}, we call $S(\hat{\mrho}_{\rm A}\,)$ the \emph{funNystr\"om estimate} of $S(\mrho_{\rm A})$.
Since we are not using the eigenvectors of our Nystr\"om approximation, we avoid computing \mps linear combinations, making this algorithm highly efficient.

To demonstrate this computational pipeline, we consider a model scenario where $\vpsi$ is the ground state of a 1D chain of $200$ quantum spins interacting under the periodic transverse field Ising model (PTFIM):
\begin{equation}
\label{eq:ptfim}
\mH =
-\sum_{i=1}^{n}\mZ_i\mZ_{\operatorname{mod}(i,n)+1}
-h\sum_{i=1}^{n}\mX_i
\in \bbR^{2^n\times 2^n}, 
\end{equation}
where $\mZ_i\in\bbR^{2^n\times 2^n}$ is defined in \cref{eq:open_ising} and $\mX_i = \Id_2^{\otimes (i-1)} \otimes \mX \otimes \Id_2^{\otimes (n-i)}$, in which $\mX=\sbmat{0 & 1 \\ 1 & 0}$ is the Pauli $\sigma_x$ matrix.
We set the external field strength $h=1.1$, which places the model near its critical point. 
The ground state of \mH is computed as a \mps using the DMRG2 algorithm \cite{white92,Sch11} using a convergence tolerance of $10^{-8}$. 
We set the subsystems $\rm A$ and $\rm B$ to be the left and right halves of the chain, leading to a reduced density matrix $\mrho_{\rm A}$ of dimension $2^{100}\times 2^{100}$.
For this choice of Hamiltonian, the eigenvalues of $\mrho_{\rm A}$ can be computed exactly from the \mps ground state \cite[\S5.1.1]{orus14}, providing a baseline for comparison.
In other settings, the density matrix $\mrho_{\rm A}$ can often be described as an \mpo or matrix product density operator \cite{verstraete04mpdo}.
These kinds of matrices are natural candidates for the funNystr\"om estimator, since they admit efficient matrix--MPS products, but it is challenging to compute their eigenvalues directly.

\begin{figure}[t]
    \centering
\includegraphics[width=\linewidth]{figs2/final_figs/numerical_results/Figure4_100_60.pdf}
\caption{\textbf{Nyström approximation at exponential scale.}
Estimates via funNystr\"om for the von Neumann entanglement entropy \cref{eq:vn_entropy} of the half-chain reduced density matrix $\boldsymbol{\rho}_{A}\in\mathbb{C}^{2^{100}\times 2^{100}}$ associated with the ground state of the $200$-site periodic transverse-field Ising Hamiltonian \cref{eq:ptfim}.
The markers track the median over 60 trials; shaded regions are bounded by the $10\%$ and $90\%$ quantiles.
(\textit{Left}) Relative entropy errors $|S(\mrho_{\rm A})- S(\hat{\mrho}_{\rm A})|/|S(\mrho_{\rm A})|$ as a function of the embedding dimension $k$ for each bond dimension $\chi\in\{1,2,4,8,16,32,64\}$. 
(\textit{Right}) Approximate eigenvalues of $\hat{\mrho}_{\rm A}$ for rank-$100$ approximation produced by \mps \GramNystrom for several values of $\chi$ compared with the exact eigenvalues.
See \cref{sec:entropy}.}
\label{fig:nystrom}
\end{figure}

\Cref{fig:nystrom} reports the relative error of the estimated von Neumann entropy and the approximate eigenvalues of $\mrho_{\rm A}$ obtained from \cref{alg:mps-nystrom} using \trps with a range of bond dimensions $\chi\in[1,64]$ and embedding dimensions $k\in[1,100]$.
See \cref{app:experiment_details} for details.
The left panel shows that the relative error of the entropy estimate decreases monotonically both in the bond dimension \(\chi\) and the embedding dimension $k$. 
The right panel shows the eigenvalues of $\hat{\mrho}_{\rm A}$ for fixed embedding dimension $k = 100$ and varying bond dimensions $\chi$.
\rmps test matrices with low bond dimension \(\chi\approx1\) struggle to recover
the 10th eigenvalue, while \rmps test matrices with larger bond dimensions approximate all the eigenvalues up to the $\sqrt{\varepsilon_{\rm mach}}$ floor.
Both panels support the core message that choosing a sufficiently large bond dimension is essential to get the best results with \rmps dimension reduction.

\subsection{\texorpdfstring{Variance-reduced trace estimators: \mps \nystrompp and \mps \XNysTrace}{Variance-reduced trace estimators: MPS Nystrom++ and MPS XNysTrace}}
\label{sec:nystrompp}

\RAMdone
The Girard--Hutchinson estimator suffers from a high variance, which typically limits it to one or two digits of accuracy in practice.
To address this shortcoming, several recent works show how to reduce the variance of the estimator
by employing the trace of a low-rank matrix approximation as a control variate,
resulting in trace estimates that can be orders of magnitude more accurate
\cite{GSO17,meyer2021hutch++,persson22,epperly24trace,CH23a,madhusudan26flextrace}.

This paper develops \mps extensions of two estimators:
\emph{\nystrompp} \cite{persson22} and \emph{\XNysTrace} \cite{epperly24trace}.
Both estimators compute the trace of a psd matrix \mA by combining the Girard--Hutchinson estimator with a Nystr\"om approximation.
As such, we can lean on our implementations of \mps Girard--Hutchinson and \mps Nystr\"om to design efficient \mps-based variations of \nystrompp and \XNysTrace.
In practice, \mps \XNysTrace tends to produce more accurate results, but \mps \nystrompp is more amenable to analysis.

\begin{algorithm}[t]
\caption{\textsc{\mps Nyström++}}
\label{alg:mps-npp-gram}
\begin{algorithmic}[1]
\Require MPS matrix--vector access to psd operator $\mA\in\bbF^{N\times N}$, total query budget $t$
\Ensure Trace estimate $ \nystromppest\approx \tr(\mA)$

\State Draw Gaussian \trps $\mOmega\in\bbF^{N\times \lfloor t/2\rfloor}$ and $\mPsi\in\bbF^{N\times \lceil t/2\rceil}$
\State $\mY \gets \mA\mOmega$
\State $\mG \gets \mOmega^\ast\mOmega,\mC \gets \mOmega^\ast\mY$
\State $\mT \gets \texttt{chol}(\mG,\texttt{'upper'})$
\State $\mB \gets \mT^{-\ast}\mC\mT^{-1}$
\Comment{Apply inverses via triangular solves}
\State $\nu \gets \max\bigl(0,\sqrt{\varepsilon_{\mathrm{mach}}}\,\lambda_{\max}(\mB)-\lambda_{\min}(\mB)\bigr)$
\Comment{Calculate shift for stability}
\State $\mR \gets \texttt{chol}(\mC+\nu\mG,\texttt{'upper'})$
\Comment{\(\mC+\nu\mG = \mOmega^*\mA_\nu\mOmega\)}
\State $\mS \gets \mY^\ast\mY+2\nu\mC+\nu^2\mG$
\Comment{$\mS=\mOmega^\ast\mA_\nu^2\mOmega=(\mY+\nu\mOmega)^\ast(\mY+\nu\mOmega)$}
\State $\mE \gets \mR^{-\ast}\mS\mR^{-1}$
\State $\mU \gets \mR^{-\ast}(\mY^\ast\mPsi+\nu\cdot \mOmega^\ast\mPsi)$ 
\State \Return $
\tr(\mE)
+
\sum_{i=1}^{\lceil t/2\rceil}
\left[
\mPsi(:,i)^\ast(\mA\mPsi(:,i))
+
\nu\norm{\mPsi(:,i)}_2^2
-
\norm{\mU(:,i)}_2^2
\right]
-\nu N
$
\Comment{\mps inner products}
\end{algorithmic}
\end{algorithm}

\subsubsection{\texorpdfstring{\mps \nystrompp}{\mps-Nystrom++}}

The basic variance reduction scheme \cite{GSO17,meyer2021hutch++} for trace estimation works as follows.
First, construct a (randomized) approximation $\hat\mA$ of the input
matrix $\mA$, and compute $\tr(\hat\mA)$.
To refine, we apply the Girard--Hutchinson estimator~\eqref{eqn:gh}
to approximate the trace of the residual $\mA - \hat\mA$:
\begin{equation*}
    \tr(\mA) = \tr(\hat\mA) + \tr(\mA - \hat\mA) \approx \tr(\hat\mA) + \GH_\ell(\mA - \hat\mA) = \tr(\hat\mA) + \frac{1}{\ell} \sum_{i=1}^\ell \vpsi_i^* (\mA - \hat\mA)\vpsi_i^{\vphantom{*}}.
\end{equation*}
In this expression, the $\vpsi_i$ are probe vectors for the Girard--Hutchinson estimator.

The \nystrompp algorithm is an instantiation of
this strategy, adapted for a psd input matrix $\mA$.
This algorithm employs the randomized Nystr{\"o}m method~\cref{eq:nystrom-basic}
to construct a factored rank-$k$ approximation
$\hat\mA = \mA\langle\mOmega\rangle$ of the input matrix
using a $k$-column test matrix $\mOmega$.
Explicitly, the \nystrompp estimator is
\begin{equation*}
    \nystromppest \defeq \tr(\mA\langle\mOmega\rangle) + \frac{1}{\ell} \sum_{i=1}^\ell \vpsi_i^* (\mA - \mA\langle\mOmega\rangle)\vpsi_i^{\vphantom{*}}.
\end{equation*}
The algorithm requires a total of $t = k+\ell$ matrix--vector products with \mA,
which usually dominate the computational cost.
Given a fixed budget, Persson et al.\ \cite{persson22} suggest the choice
$k = \ell = \lfloor t/2 \rfloor$, sharing the matrix--vector products evenly
between the Nystr{\"o}m approximation and the trace estimate for the residual. If \(\mOmega,\mPsi\) are \rmps test matrices with bond dimension \(\chi\), then \mps \nystrompp uses \(t=k+\ell\) matrix--\mps products plus \(\cO(t^2nd\,\bar\chi^3+t^3)\) downstream operations.

We extend this approach to the tensor setting, arriving at the \mps \nystrompp estimator.
This algorithm constructs the Nystr{\"o}m approximation $\mA\langle\mOmega\rangle$
using an \rmpstest $\mOmega$, and it performs a Girard--Hutchinson estimate of the residual trace using \rmps probe vectors \(\vpsi_i\).
For numerical reliability, we apply the procedure to the shifted matrix
$\mA_\nu = \mA + \nu \Id$ and subtract $\tr(\nu \Id)=\nu N$ at the last step.

\subsubsection{\texorpdfstring{\mps \XNysTrace}{ \mps-XNysTrace}}
\label{subsec:xnystrace}

\RAMdone
When the eigenvalues of \mA decay rapidly, the accuracy of a variance-reduced trace estimator
primarily depends on
the accuracy of the low-rank approximation $\hat\mA \approx \mA$.
As such, given a fixed budget of matrix--vector products, we should endeavor to make the rank of the Nystr\"om approximation $\hat\mA$ as large as possible.
The XNysTrace estimator \cite{epperly24trace} accomplishes this goal using a leave-one-out design.
Let $\mOmega \in \bbF^{N\times k}$ 
be an isotropic test matrix with iid random columns, and let $\mOmega_{-i}$ denote the ``downdated'' test matrix where the $i$th column $\vomega_i$ has been deleted.
The \XNysTrace estimator takes the form
\begin{equation*}
    \xnystraceest \coloneqq \frac{1}{t} \sum_{i=1}^t \left[ \tr(\mA\langle \mOmega_{-i}\rangle) + \vomega_i^*(\mA - \mA\langle \mOmega_{-i}\rangle) \vomega_i^{\vphantom{*}}  \right] \quad \text{where } \mOmega = \frac{1}{\sqrt{t}} \begin{bmatrix} \vomega_1 & \cdots & \vomega_t \end{bmatrix}.
\end{equation*}
This estimator has a single parameter $t$, eliminating the need to separately choose the rank $k$ and number of test vectors $\ell$ in the \nystrompp estimator.
It is formed by constructing a rank-($t-1$) approximation using the downdated test matrix $\mOmega_{-i}$ and using the remaining vector $\vomega_i$ to estimate the residual trace $\tr(\mA - \mA\langle \mOmega_{-i}\rangle)$.
The estimator then averages over all choices $i$ to reduce variance.
Using downdating formulas for the Nystr\"om approximation \cite{epperly24trace,mtmepperly25,ET24}, this estimator can be implemented using only $t$ matrix--vector products.

Leveraging the \mps Girard--Hutchinson, \mps Nystr\"om, and \mps\nystrompp methods, it is straightforward to implement \mps version of \XNysTrace in the tensor network setting.
Our pseudocode in \Cref{alg:mps-xnystrace} most directly mirrors the implementation in Epperly's thesis \cite[Prog.~14.3]{mtmepperly25}. If \(\mOmega\) is an \rmps test matrix with bond dimension \(\chi\), then \mps \XNysTrace uses \(t\) matrix--\mps products plus \(\cO(t^2nd\,\bar\chi^3+t^3)\) downstream operations.

\begin{algorithm}[t]
\caption{\textsc{\mps XNysTrace}}
\label{alg:mps-xnystrace}
\begin{algorithmic}[1]
\Require MPS matrix--vector access to psd operator $\mA\in\bbF^{N\times N}$, query budget $t$
\Ensure Trace estimate $\xnystraceest\approx\tr(\mA)$

\State Draw a Gaussian \trp\ 
$\mOmega \in\bbF^{N\times t}$
\State $\mY \gets \mA\mOmega$
\State $\mG \gets \mOmega^\ast\mOmega,
       \mC \gets \mOmega^\ast\mY$
\State $\mR \gets \operatorname{chol}(\mG,\texttt{'upper'})$
\State $\mB \gets \mR^{-\ast}\mC\mR^{-1}$
\Comment{Apply inverses via triangular solves}

\State $\nu \gets
\max(
0,\,
\sqrt{\varepsilon_{\mathrm{mach}}}\,\lambda_{\max}(\mB)
-\lambda_{\min}(\mB)
)$
\Comment{Stability shift}

\State $\mS \gets \mY^\ast\mY+2\nu\mC+\nu^2\mG$
\Comment{$\mS=\mOmega^\ast\mA_\nu^2\mOmega$}

\State $\mL \gets \operatorname{chol}(\mC+\nu\mG,\texttt{'lower'})$
\Comment{\(\mC+\nu\mG = \mOmega^*\mA_\nu\mOmega\)}
\State $\mM \gets \mL^{-\ast}\mL^{-1}$
\Comment{$\mM=(\mOmega^*\mA_\nu\mOmega)^{-1}$}
\State $\mV\gets\mS\mM$
\State\Return
$
\tr(\mV)
+
\frac{1}{t}
\sum_{i=1}^{t}
\left[
(
t-\langle \mM(:,i),\mV(:,i)\rangle
)/
\mM_{ii}
\right]
-\nu N
$
\end{algorithmic}
\end{algorithm}
\subsubsection{Theoretical guarantees}
\RAMdone
This section outlines a theoretical analysis of the \mps \nystrompp estimator. Implemented with \rmpss of sufficiently large bond dimension, the estimator achieves
approximation guarantees that compare to the \nystrompp estimator
with a standard Gaussian probe vectors.
Unfortunately, the analysis of \mps \XNysTrace is technically involved,
so this paper does not pursue the extension.

Our first result shows that \mps \nystrompp achieves \emph{spectral accuracy}.
That is, the accuracy of the method is controlled by the error
of the best rank-$r$ approximation of the input matrix.

\begin{theorem}[\mps \nystrompp: Spectral accuracy] \label{thm:nystrompp-spectral-rmps}
    Let $\mA \in \bbF^{d^n\times d^n}$ be psd, fix a target rank $r$ and an oversampling parameter $\gamma > 2$, and consider the \rmps \nystrompp estimator with $k$ matvecs for low-rank approximation and $\ell$ matvecs for residual trace estimation, where $k, \ell \gtrsim r$.
    Then for bond dimension $\chi = \gamma n$, it holds with probability at least 0.98 that
\begin{equation*}
|\nystromppest - \tr(\mA)|^2 \lesssim \frac{1}{r} \snorm{\mA - \lowrank\mA_r}_{\rm F}^2 + \left(\frac{1}{r^2} + \frac{1}{\gamma r}\right) \norm{\mA - \lowrank{\mA}_r}_*^2.
    \end{equation*}
\end{theorem}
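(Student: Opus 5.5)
We condition on the test matrix $\mOmega$ and treat the two stages separately. Write $\mE \defeq \mA - \mA\langle\mOmega\rangle$, which is psd because the Nystr\"om approximation never exceeds $\mA$ in the psd order. Since the probes $\vpsi_i$ are independent of $\mOmega$, the error $\nystromppest - \tr(\mA)$ equals $\GH_\ell(\mE) - \tr(\mE)$. Conditionally on $\mOmega$, \cref{cor:hutch-frob-bound} then gives
\begin{equation*}
\E\bigl[|\nystromppest - \tr(\mA)|^2 \,\big|\, \mOmega\bigr] \le \frac{1}{\ell}\Bigl\{ c_1 \snorm{\mE}_{\rm F}^2 + c_2 \snorm{\mE}_*^2\Bigr\}.
\end{equation*}
Take $\chi = \gamma n$. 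Using $(1+a/\chi)^{n-1} \le \e^{a/\gamma}$ and $\gamma > 2$, we get $c_1 \le 2\e^{1/\gamma} \lesssim 1$. For the second coefficient we use $\e^{x} - 1 \le x\e^{x}$ with $x \le 1$, which yields $c_2 \le 3(\e^{2/\gamma} - 1) \lesssim 1/\gamma$ (and $2(\e^{1/\gamma}-1) \lesssim 1/\gamma$ in the complex case). Chebyshev/Markov applied conditionally then shows that, with probability at least $0.99$ over the probes,
\begin{equation*}
|\nystromppest - \tr(\mA)|^2 \lesssim \frac{1}{\ell}\snorm{\mE}_{\rm F}^2 + \frac{1}{\gamma \ell}\snorm{\mE}_*^2 .
\end{equation*}

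Next we control $\mE$ using \cref{thm:nystrom}. The condition $\gamma > 2$ gives $(1+\rK_{\bbF}/\chi)^{n-1} \le \e^{1/\gamma} \lesssim 1$, so the hypothesis $k \gtrsim r$ meets the embedding-dimension requirement. With probability at least $0.99$, the theorem supplies two bounds:
\begin{equation*}
\snorm{\mE}_* \lesssim \snorm{\mA - \lowrank\mA_r}_* \quad\text{and}\quad \snorm{\mE}_{\rm F}^2 \lesssim \snorm{\mA-\lowrank\mA_r}_{\rm F}^2 + \tfrac{1}{r}\snorm{\mA-\lowrank\mA_r}_*^2 .
\end{equation*}
Substituting these bounds, together with $\ell \gtrsim r$, into the previous display gives
\begin{equation*}
|\nystromppest - \tr(\mA)|^2 \lesssim \frac1r\snorm{\mA-\lowrank\mA_r}_{\rm F}^2 + \Bigl(\frac{1}{r^2} + \frac{1}{\gamma r}\Bigr)\snorm{\mA-\lowrank\mA_r}_*^2 .
\end{equation*}
A union bound over the two failure events of probability $0.01$ each gives overall probability $0.98$.

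The main obstacle is keeping the nuclear-norm term at order $1/(\gamma r)$ instead of $1/r$. This requires the sharp coefficient $c_2 = O(1/\gamma)$ from \cref{thm:rmps_variance_intro}, including checking the constant in $\e^{2/\gamma}-1 \lesssim 1/\gamma$ uniformly for $\gamma > 2$. It also requires applying \cref{cor:hutch-frob-bound} to the residual $\mE$ rather than to $\mA$, which is legitimate only because of conditional independence. The Frobenius-norm guarantee of \cref{thm:nystrom}\ref{item:trp-nystrom-frob-bound} is what produces the $1/r^2$ term. Since \cref{thm:nystrom} is stated for real matrices, the complex case needs the analogous complex version of that theorem, which we assume carries over.
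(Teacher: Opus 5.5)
Your proposal is correct and is essentially the paper's argument. The paper first proves the generic \cref{thm:nystrompp-generic}: conditional variance of the Girard--Hutchinson estimator on the residual, Chebyshev, the nuclear- and Frobenius-norm Nystr\"om bounds, and a union bound. It then specializes to $a \lesssim 1$, $b \lesssim 1/\gamma$ for $\chi = \gamma n$, which is exactly what you do with the \rmps constants plugged in directly. One cosmetic slip: the correct bound is $(1+\rK_{\bbF}/\chi)^{n-1} \le \e^{\rK_{\bbF}/\gamma}$ rather than $\e^{1/\gamma}$, which is still $\lesssim 1$ for $\gamma > 2$.
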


We establish this bound in \cref{sec:nystrompp-analysis}.
The theorem states that the error of \mps \nystrompp decays quickly when
the eigenvalues of the input matrix \mA decay quickly.
As with the Girard--Hutchinson estimator, one can improve the quality of the \nystrompp estimate by increasing the matrix--\mps product budget $t = k+\ell$ or the bond dimension $\chi$.
Our numerical experience suggests that the former choice is better.

Next, we provide worst-case bounds without assumptions on the spectrum of the psd matrix \mA.
Variance-reduced trace estimators such as \nystrompp and \XNysTrace can achieve relative error \(\eps\)
using only \(\cO(1/\eps)\) matrix--vector products \cite{persson22,epperly24trace}.
This rate is optimal \cite{meyer2021hutch++}, and it improves upon the \(\cO(1/\eps^2)\) rate achieved by the Girard--Hutchinson method.
Our second error bound shows that \rmps \nystrompp attains this error scaling, although we must increase the bond dimension to $\chi \asymp n / \varepsilon$.
This appears to be the first guarantee showing how to achieve the \(\cO(1/\eps)\) rate for trace estimation for an exponentially large, tensor-structured matrix \mA.

\begin{theorem}[\mps \nystrompp: Worst-case guarantee]
    \label{thm:nystrompp}
    Let $\mA \in \bbF^{d^n\times d^n}$ be psd, fix an error level $\varepsilon \in (0,1/2)$, and consider the \rmps \nystrompp estimator with $k$ matvecs for low-rank approximation and $\ell$ matvecs for residual trace estimation.
    Then if $\chi \asymp n/\varepsilon$ and $k=\ell \asymp 1/\varepsilon$, it holds that 
\begin{equation*}
        |\nystromppest - \tr(\mA)| \le \varepsilon \tr(\mA) \quad \text{with probability at least } 0.98.
    \end{equation*}
\end{theorem}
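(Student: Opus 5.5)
Since \(\nystromppest\) is built from two independent pieces, the natural route is to condition on the Nystr\"om test matrix \(\mOmega\) and bound the residual Girard--Hutchinson error with \cref{cor:hutch-frob-bound}. Write \(\mE \defeq \mA - \mA\langle\mOmega\rangle\), which is psd. Then
\[
\nystromppest - \tr(\mA) = \GH_\ell(\mE) - \tr(\mE),
\]
where \(\GH_\ell\) uses the independent \rmps probes \(\vpsi_i\). Conditional on \(\mOmega\), \cref{cor:hutch-frob-bound} and Chebyshev's inequality give, with probability at least \(0.99\),
\[
|\nystromppest - \tr(\mA)|^2 \lesssim \frac{1}{\ell}\Bigl[(1+\rK_{\bbF}/\chi)^{n-1}\norm{\mE}_{\rm F}^2 + \bigl((1+2/\chi)^{n-1}-1\bigr)\norm{\mE}_*^2\Bigr].
\]
For \(\chi = \rC n/\varepsilon\), the first coefficient is \(\lesssim 1\) and the second is \(\le \e^{2\varepsilon/\rC}-1 \lesssim \varepsilon\). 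This yields
\[
|\nystromppest-\tr(\mA)|^2 \lesssim \frac{1}{\ell}\norm{\mE}_{\rm F}^2 + \frac{\varepsilon}{\ell}\norm{\mE}_*^2.
\]

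Next we control \(\mE\) using \cref{thm:nystrom} with target rank \(r \asymp 1/\varepsilon\). This is admissible because \(k \asymp 1/\varepsilon\) and, since \(\chi \gtrsim n\), the requirement \(k \gtrsim (1+\rK_{\bbF}/\chi)^{n-1} r\) reduces to \(k \gtrsim r\). With probability \(0.99\), part \ref{item:trp-nystrom-trace-bound} gives \(\norm{\mE}_* \lesssim \norm{\mA-\lowrank\mA_r}_* \le \tr(\mA)\). Part \ref{item:trp-nystrom-frob-bound} gives
\[
\norm{\mE}_{\rm F}^2 \lesssim \norm{\mA-\lowrank\mA_r}_{\rm F}^2 + \frac1r\tr(\mA)^2.
\]

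It remains to bound the tail Frobenius norm. Writing \(\lambda_j\) for the decreasing eigenvalues of \(\mA\), we have \(\lambda_{r+1}\le \tr(\mA)/(r+1)\). Therefore
\[
\norm{\mA-\lowrank\mA_r}_{\rm F}^2 \le \lambda_{r+1}\norm{\mA-\lowrank\mA_r}_* \le \tr(\mA)^2/r,
\]
and hence \(\norm{\mE}_{\rm F}^2 \lesssim \varepsilon\,\tr(\mA)^2\).

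Substituting these bounds with \(\ell \asymp 1/\varepsilon\) gives
\[
|\nystromppest-\tr(\mA)|^2 \lesssim \varepsilon\cdot\varepsilon\,\tr(\mA)^2 + \varepsilon\cdot\varepsilon\,\tr(\mA)^2 = \varepsilon^2\tr(\mA)^2.
\]
A union bound over the two events gives probability at least \(0.98\). Absorbing constants by choosing the implicit constants in \(k,\ell\asymp 1/\varepsilon\) and \(\chi\asymp n/\varepsilon\) large enough gives \(|\nystromppest-\tr(\mA)|\le\varepsilon\tr(\mA)\).

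The main obstacle is the nuclear-norm term of the \rmps variance. Unlike the Gaussian case, the residual error contains \(\norm{\mE}_*^2/\ell\), and \(\norm{\mE}_*\) is only \(\lesssim \tr(\mA)\), not small. This term can be made \(\varepsilon^2\tr(\mA)^2\) only because its coefficient is \(\order(n/\chi)\), which forces \(\chi \asymp n/\varepsilon\). We also need the constant-level Nystr\"om nuclear bound rather than anything sharper. A secondary point is that the Frobenius bound of \cref{thm:nystrom} carries the extra \(\frac1r\norm{\cdot}_*^2\) term, but with \(r \asymp 1/\varepsilon\) it has the same order as the tail term and causes no loss.
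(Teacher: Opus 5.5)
Your proof is correct and follows essentially the same route as the paper, which derives this theorem from \cref{cor:nystrompp-worst-case} via \cref{thm:nystrompp-generic}. Both arguments condition on $\mOmega$, bound the residual Girard--Hutchinson error by Chebyshev, control $\mA - \mA\langle\mOmega\rangle$ with the Nystr\"om Frobenius and nuclear bounds at rank $r \asymp k$, and finish with a Frobenius-to-trace comparison. The differences are cosmetic: you plug in the \rmps constants directly instead of the generic quadratic-form parameters $(a,b)$, and you prove the tail bound $\snorm{\mA - \lowrank{\mA}_r}_{\rm F}^2 \le \tr(\mA)^2/r$ inline rather than citing \cref{implem:l1l2}.
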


\noindent 
This result follows when we combine \cref{thm:nystrompp-spectral-rmps} with the bound $\norm{\mA - \lowrank{\mA}_r}_{\rm F}^2 \lesssim r^{-1} \norm{\mA}_*^2$ for psd \mA; see \cref{sec:nystrompp-analysis} for details.
In practice, the recommendation $\chi = n/\varepsilon$ seems pessimistic, and we recommend setting $\chi \le n$ in practice.

\begin{figure}[t]
    \centering
\includegraphics[width=1\linewidth]{figs2/final_figs/numerical_results/Figure5.pdf}
\caption{\textbf{Comparison of \mps trace estimators with variance reduction.}
\RAMdone\CCdone
Trace estimates for three psd \(\mpos\) \(\mA\in\bbR^{2^{50}\times2^{50}}\) with several spectral decay profiles; the (normalized) spectra appear in the rightmost panel. We compare the \mps Girard--Hutchinson, \nystrompp, and \XNysTrace estimators (\cref{alg:mps-hutch,alg:mps-npp-gram,alg:mps-xnystrace}), and we test bond dimensions \(\chi\in\{1,4,8,16\}\).
Markers track the median relative error  over 60 independent trials;
the shaded regions show the 10\% to 90\% quantile range. Across all instances, the variance-reduced trace estimators improve on \mps Girard--Hutchinson,
and \mps \XNysTrace outperforms \mps \nystrompp.
Increasing the bond dimension improves the accuracy of all estimators.
See \cref{ssec:subsec:mpo_mps}.
}
    \label{fig:trace-comparison}
\end{figure}

\subsubsection{Evaluation: Comparison of trace estimators}
\label{ssec:subsec:mpo_mps} 
\ENEdone 
\RAMdone
\CCdone
\Cref{fig:trace-comparison} compares the Girard--Hutchinson, \nystrompp, and \XNysTrace estimators,
each one implemented with \rmpss.
We test the methods on three psd \mpos \(\mA\in\bbR^{2^{50} \times 2^{50}}\)
with designated eigenvalue decay rates, chosen to model instances arising in applications.
\begin{enumerate}
\item \textbf{Exponential:} 
The diagonal matrix $\mA$ with $\mA(i,i) = \alpha^i$, which is an \mpo with bond dimension $D = 1$.
We set $\alpha = 0.7$ in our experiments.
This matrix is numerically low-rank.
\item \textbf{Inverse-Laplacian:}
The matrix $\mA=(\operatorname{tridiag}(-1,2,-1))^{-1}$, which is the inverse of the 1-dimensional discrete Laplacian.
It has an \mpo representation of bond dimension $D = 5$ \cite{kazeev2012low}.
This matrix has eigenvalues that decay at a polynomial rate,
and it is farther from a low-rank matrix.
\item \textbf{Staircase:}
The diagonal matrix of the form $$\mA = \diag(\underbrace{h_1,\ldots,h_1}_{S_1 \text{ times}},\underbrace{h_2,\ldots,h_2}_{S_2 \text{ times}},\ldots,\underbrace{h_R,\ldots,h_R}_{\text{$S_R$ times}}).$$
The spectrum of \mA is clustered, and resembles a staircase with $R$ steps. 
In our experiments, \(R=4\), with step lengths \((S_r)_{r=1}^4=(64,64,128,256)\) and step heights \((h_r)_{r=1}^4=(1,0.1,0.03,0.01)\), followed by a zero tail.
This matrix has an exact representation as an \mpo of bond dimension $2R-1$ \cite[pg.~2]{ryzhakov2022}.
The stairs are hard for Nystr\"om-based trace estimators to resolve.
\end{enumerate}
For each test matrix, the advantages of variance-reduced trace estimation
carry over to the tensor network setting. Replacing \mps Girard--Hutchinson (\cref{alg:mps-hutch})
with either \mps Nystr\"om++ (\cref{alg:mps-npp-gram}) or \mps XNysTrace
(\cref{alg:mps-xnystrace}) reduces the relative trace error
$
    |\tr(\mA)-\widehat{\tr}(\mA)|/|\tr(\mA)|
$
by several orders of magnitude. 
Furthermore, \mps\XNysTrace reliably outperforms \mps\nystrompp, particularly on challenging examples examples such as the staircase \mpo.
Thus, we recommend deploying \mps\XNysTrace in applications, despite our current lack of theoretical guarantees.

Within the range of parameters tested,
increasing the bond dimension \(\chi\) consistently improves the trace estimate.
Nonetheless, there are diminishing returns: once \(\chi\) is large enough, we recommend increasing the number of matrix--\mps accesses \(t\) instead of increasing \(\chi\).

\subsection{More algorithms: Regression and low-rank approximation for general matrices} \label{sec:more-algs}
\ENEdone 
\RAMdone
\CCdone
Let us mention a few other randomized linear algebra algorithms that can be implemented with \trps.
For tensor-structured regression problems, one may implement the sketch-and-solve algorithm \cite{sarlos06,drineas11} using an \trp,
with potential applications to linear system and eigenvalue solvers for tensor networks \cite{dolgov2012tt,kressner2014low,Bucci_sTTGMRES}.
To approximate general (non-psd) matrices, one may implement randomized low-rank approximation algorithms
such as the randomized SVD \cite{halko11} or the generalized Nystr\"om method \cite{Naka2020,tropp2017a,Clarkson09,woolfe08} with an \trp;
possible applications include compression of (sums and products of) tensor networks \cite{daas2021,daas25,camano25}.
Theoretical justification for these algorithms follows from the subspace injection results in \cref{sec:osi} together with relevant theorems from \cite{cam25}. 
For brevity, we will not elaborate further on these algorithms, except for some incidental remarks on the randomized SVD (\cref{sec:rsvd}).
 \section{The fourth-moment tensor of an \rmps} \label{sec:rmps-moments}

\ENEdone 
\RAMdone
\CCdone
This section develops an \emph{exact} description of the fourth-moment tensor of a real \rmps $\vomega \in \bbR^{d^n}$.
As a consequence, we can extract bounds on the variance of a quadratic form $\vomega^\top\mA\vomega$, proving \cref{thm:rmps_variance_intro} and providing theoretical support for the algorithms and applications discussed in \cref{sec:algs-applications}.
The results of this section are fundamental to \cref{sec:quadratic-form-condition}, where we use them to analyze these randomized linear algebra algorithms.
To simplify the presentation, we focus on the real case $\bbF = \bbR$.
In \cref{sec:complex}, we explain how the complex case $\bbF = \bbC$ follows
from a similar argument.

\subsection{Preliminaries and notation}

\ENEdone 
\RAMdone
\CCdone
As discussed in the introduction, an \mps $\vomega$ can be interpreted either as a tensor of size $d\times d \times \cdots \times d$ or a vector of length $d^n$.
To index such an object, we can either use $n$ individual indices $i_1 ,\ldots,i_n \in [d]$ or a single multi-index $\vi = (i_1,\ldots,i_n) \in [d]^n$.
We denote the entry of $\vomega$ at this position as either
\begin{equation*}
    \vomega(i_1,\ldots,i_n) \quad \text{or} \quad \vomega(\vi).
\end{equation*}
The \emph{fourth-moment tensor} of a real \rmps $\vomega$ is
\begin{equation}
    \tM \coloneqq \E[\vomega^{\otimes 4}] \in \bbR^{d^n \times d^n \times d^n \times d^n}.
\end{equation}
It is indexed by a four-tuple of multi-indices \((\vi, \vj, \vk, \vl)\).
Entrywise, its values are
\begin{equation*}
    \tM(\vi,\vj,\vk,\vl) = \E[\vomega(\vi) \vomega(\vj) \vomega(\vk) \vomega(\vl)].
\end{equation*}
Once we determine the fourth-moment tensor of an \rmps, we can compute the variance of a quadratic form $\vomega^\top\mA\vomega$.
Indeed,
\begin{equation*}
    \Var(\vomega^\top\mA\vomega) = \E[(\vomega^\top\mA\vomega)^2] - (\E[\vomega^\top\mA\vomega])^2 = \E[\langle \mA^{\otimes 2}, \vomega^{\otimes 4}\rangle] - \tr(\mA)^2 = \langle \mA^{\otimes 2},\tM\rangle - \tr(\mA)^2. 
\end{equation*}
Throughout this section, we make extensive use of tensor diagram notation (tensor diagrams) to simplify calculations. 
We refer the reader to \Cref{subsec:background} for a brief summary and \cite[$\mathsection$3.83]{Ballard25} for a detailed treatment.

\subsection{Core ingredient: The copy tensor}
\label{sec:copy}

\RAMdone
\ENEdone 
\CCdone
The \emph{copy tensor} is a tensor that can be used to simplify tensor diagrams \cite{ahle24,meng2026recursive}.
\begin{definition}[Copy tensor]
    The \emph{copy tensor of order \(t\)} with base dimension $d$, denoted \(\Delta^{(t)}\in\Rdtensor{t}\), is the tensor with entries
    \[
        \Delta^{(t)}(\alpha_1,\ldots,\alpha_t) = \indicator_{\{\alpha_1=\alpha_2=\cdots=\alpha_t\}} \quad \text{for any } \alpha_1,\ldots,\alpha_t \in [d].
    \]
\end{definition}
In special cases, the copy tensor reduces to familiar objects.
The order-one copy tensor is the all-ones vector 
\[
    \Delta^{(1)}(\alpha) = 1 \quad \text{for all } \alpha \in [d],
\]
and the order-two copy tensor is the identity matrix
\[
    \Delta^{(2)}(\alpha_1,\alpha_2) = \indicator_{\{\alpha_1 = \alpha_2\}} = \Id(\alpha_1,\alpha_2).
\]
In tensor diagram notation, the order-$t$ copy tensor is denoted as a white circle with $t$ legs, one for each index:
\begin{equation}
   \Delta^{(t)} =
   \begin{tikzpicture}[baseline=0ex]
       \foreach \x/\name/\lab/\hasLine in {
        0/i/\alpha_1/1,
        0.7/j/\alpha_2/1,
        1.4/k/\alpha_3/1,
        2.1/cdots/\cdots/1,
        2.8/l/\alpha_t/1}{
        \coordinate (\name) at (\x,0);
        
        \ifnum\hasLine=1
            \draw[leg] (\name) -- ++(0,0.4) node[above=2pt] {$\lab$};
        \else
            \path (\name) -- ++(0,0.4) node[above=2pt] {$\lab$};
        \fi
    }
\draw[leg] (i) .. controls +(0,-0.55) and +(0,-0.55)
              .. node[midway,copy,scale=1.8](copy_node) {} (l);
\draw[leg] (j) .. controls +(0,-0.55) and +(0,-0.55)
              .. node[midway,copy,scale=1.8] {} (cdots);
    \draw[leg] (k) -- (copy_node) 
;
\end{tikzpicture}
\end{equation}
Copy tensors help us express many tensors and tensor operations in a concise way.
For instance, suppose we are given the order-four tensor $\tT \in \Rdtensor{4}$, with entries \(\tT(i,j,k,\ell) = \indicator_{\{i=k\}}\indicator_{\{j=\ell\}}\).
Since we can factor
\[\tT(i,j,k,\ell) = \Delta^{(2)}(i,k) ~\cdot~ \Delta^{(2)}(j,\ell),\]
we can write \tT as the tensor product of two copy tensors.
As a tensor diagram, we write
\begin{equation} \label{eq:copy-indicator}
    \tT = \underbrace{
\begin{tikzpicture}[baseline=-1ex]
  \foreach \x/\name/\lab in {0/i/i,0.7/j/j,1.4/k/k,2.1/l/\ell}{
    \coordinate (\name) at (\x,0);
    \draw[leg] (\name) -- ++(0,-0.4) node[below=2pt] {$\lab$};
  }
  \draw[leg] (i) .. controls +(0,0.55) and +(0,0.55)
              .. node[midway,copy,scale=1.8] {} (k);
  \draw[leg] (j) .. controls +(0,0.55) and +(0,0.55)
              .. node[midway,copy,scale=1.8] {} (l);
\end{tikzpicture}
}_{\indicator_{\{i=k\}}\indicator_{\{j=\ell\}}}
\end{equation}
The copy tensor can also be used to represent sums of tensors.
Indeed, suppose that $\tB = \tA^{(1)} + \tA^{(2)} + \cdots + \tA^{(r)}$ is the sum of $r$ tensors $\tA^{(i)} \in \bbR^{d_1 \times \cdots \times d_t}$.
The sum of tensors is not an operation that is directly expressible using tensor diagrams, but there is a workaround using the copy tensor.
Namely, introduce the stacked tensor $\tA_{\rm stack} \in \bbR^{d_1 \times \cdots \times d_t \times r}$ by concatenating the summands along a new $(t+1)$st dimension:
\[
    \tA_{\rm stack}(:,:,\cdots,:,\alpha) = \tA^{(\alpha)} \quad \text{for } \alpha \in [r].
\]
Then the sum is 
\begin{equation*}
    \tB = \sum_{\alpha=1}^r \tA^{(\alpha)} = \sum_{\alpha=1}^r \tA_{\rm stack}(:,:,\cdots,:,\alpha) \cdot 1 = \sum_{\alpha=1}^r \tA_{\rm stack}(:,:,\cdots,:,\alpha) \Delta^{(1)}(\alpha).
\end{equation*}
We have inserted the copy tensor of order one, which is the all-ones vector.
We now recognize the sum $\tB = \sum_{\alpha=1}^r \tA^{(\alpha)}$ as the contraction of the stacked tensor $\tA_{\rm stack}$ with the order-one copy tensor:
\[
    \tB = \tnshow{E7}{0.13}
\]

Finally, let us note an important property of the copy tensor, often referred to as the \emph{fusion property}:

\begin{importedtheorem}[Copy tensors fuse together, \protect{\cite[Fig.~13,~14]{biamonte2011categorical}}]
    \label{impthm:copy-tensors-fuse}
    Let \(\Delta^{(t)}\) and \(\Delta^{(s)}\) be copy tensors with orders $t$ and $s$
    and common base dimension \(d\).
    Then the contraction of \(\Delta^{(t)}\) and \(\Delta^{(s)}\) along a single mode yields the copy tensor \(\Delta^{(t+s-2)}\in(\bbR^d)^{\otimes(t+s-2)}\).
\end{importedtheorem}
For example, for copy tensors $\Delta^{(4)}$ and $\Delta^{(3)}$ with common base dimension $d$, contracting the last mode of $\Delta^{(4)}$ with the first mode of $\Delta^{(3)}$ yields a copy tensor of order five:
\[
\begin{tikzpicture}[baseline=0ex]
       \foreach \x/\name/\lab/\hasLine in {
        0/i/\alpha_1/1,
        0.7/j/\alpha_2/1,
        1.4/k/\alpha_3/1,
        2.1/l/~/0,
        2.8/m/~/0}{
        \coordinate (\name) at (\x,0);
        \ifnum\hasLine=1
            \draw[leg] (\name) -- ++(0,0.4) node[above=2pt] {$\lab$};
        \else
            \path (\name) -- ++(0,0.4) node[above=2pt] {$\lab$};
        \fi
    }
\path[spath/save=copy11] (i) .. controls +(0,-0.55) and +(0,-0.55) .. node[midway,copy,scale=1.8,style=thick](copy_node1){} (m);
  \begin{pgfonlayer}{background}
      \draw[leg, spath/split at keep start={copy11}{0.5}, spath/use=copy11];
      \path[spath/save=copy12] (j) .. controls +(0,-0.55) and +(0,-0.55) .. (l);
      \draw[leg, spath/split at keep start={copy12}{0.5}, spath/use=copy12];
      \draw[leg] (k) -- (copy_node1) node[below left=4pt]{\(\Delta^{(4)}\)};
  \end{pgfonlayer}
\foreach \x/\name/\lab/\hasLine in {
        1.7/i2/~/0,
        2.4/j2/\alpha_4/1,
        3.1/k2/\alpha_5/1}{
        \coordinate (\name) at (\x,0);
        \ifnum\hasLine=1
            \draw[leg] (\name) -- ++(0,0.4) node[above=2pt] {$\lab$};
        \else
            \path (\name) -- ++(0,0.4) node[above=2pt] {$\lab$};
        \fi
    }
  \path[spath/save=copy2] (k2) .. controls +(0,-0.55) and +(0,-0.55) .. node[midway,copy,scale=1.8,style=thick](copy_node2){} (i2);
  \begin{pgfonlayer}{background}
      \draw[leg, spath/split at keep start={copy2}{0.5}, spath/use=copy2];
      \draw[leg] (j2) -- (copy_node2) node[below right=4pt]{\(\Delta^{(3)}\)};
  \end{pgfonlayer}
\draw[leg] (copy_node1) .. controls +(0, -0.55) and +(0.,-0.55) .. node[midway,below=1ex](){$d$} (copy_node2);
    \end{tikzpicture}
   =
   \begin{tikzpicture}[baseline=0ex]
       \foreach \x/\name/\lab/\hasLine in {
        0/i/\alpha_1/1,
        0.7/j/\alpha_2/1,
        1.4/k/\alpha_3/1,
        2.1/cdots/\alpha_4/1,
        2.8/l/\alpha_5/1}{
        \coordinate (\name) at (\x,0);
        
        \ifnum\hasLine=1
            \draw[leg] (\name) -- ++(0,0.4) node[above=2pt] {$\lab$};
        \else
            \path (\name) -- ++(0,0.4) node[above=2pt] {$\lab$};
        \fi
    }
\draw[leg] (i) .. controls +(0,-0.55) and +(0,-0.55)
              .. node[midway,copy,scale=1.8](copy_node) {} (l);
\draw[leg] (j) .. controls +(0,-0.55) and +(0,-0.55)
              .. node[midway,copy,scale=1.8] {} (cdots);
    \draw[leg] (k) -- (copy_node) 
    node[below=4pt]{\(\Delta^{(5)}\)}
    ;
\end{tikzpicture}
\]

\subsection{The fourth moment of a Gaussian tensor} \label{sec:gaussian}
\CCdone
\RAMdone
\ENEdone 
We begin our journey by studying the fourth-moment tensor \(\tN_d\) of a Gaussian vector \(\vg\in\bbR^d\).
We will then generalize to the fourth-moment tensor \(\tN_{d_1 \times \cdots \times d_t}\) of a dense Gaussian tensor \(\tG\in\bbR^{d_1 \times \cdots \times d_t}\).
This perspective will be useful because, for \(t \in \{2,3\}\), the tensor
\(\tN_{d_1 \times \cdots \times d_t}\) is the fourth moment of a \emph{single core}
of an \rmps. 
Finally, in preparation for computing the fourth-moment
tensor of an \rmps, we evaluate the fourth-moment tensor of the tensor product of Gaussian vectors.

\subsubsection{Special case: A Gaussian vector} \label{sec:gaussian-vector}

\RAMdone
\ENEdone 
\CCdone
Let $\tN_d = \E[\vg^{\otimes 4}]$ denote the fourth-moment tensor of a standard Gaussian vector $\vg\in \bbR^d$.
The entries of $\tN_d$ take a simple form:
\begin{equation} \label{eq:gaussian-tensor-indicators}
\tN_d(i,j,k,\ell)
= \indicator_{\{i = j\}} \indicator_{\{k = \ell\}} + \indicator_{\{i = k\}} \indicator_{\{j = \ell\}} + \indicator_{\{i = \ell\}} \indicator_{\{j = k\}}.
\end{equation}
This expression can either be obtained by a direct calculation or by the Isserlis--Wick theorem \cite{iss18}. 
Equivalently, \(\tN_d\) is the sum of three tensors represented using copy tensors:
\begin{equation*}
\tN_d
=
\underbrace{
\begin{tikzpicture}[baseline=-0.5ex]
  \foreach \x/\name/\lab in {0/i/i,0.7/j/j,1.4/k/k,2.1/l/\ell}{
    \coordinate (\name) at (\x,0);
    \draw[leg] (\name) -- ++(0,-0.4) node[below=2pt] {$\lab$};
  }
  \draw[leg] (i) .. controls +(0,0.55) and +(0,0.55)
              .. node[midway,copy,scale=1.8] {} (j);
  \draw[leg] (k) .. controls +(0,0.55) and +(0,0.55)
              .. node[midway,copy,scale=1.8] {} (l);
\end{tikzpicture}
}_{\indicator_{\{i=j\}}\indicator_{\{k=\ell\}}}
\,\,\,+\,\,\,
\underbrace{
\begin{tikzpicture}[baseline=-0.5ex]
  \foreach \x/\name/\lab in {0/i/i,0.7/j/j,1.4/k/k,2.1/l/\ell}{
    \coordinate (\name) at (\x,0);
    \draw[leg] (\name) -- ++(0,-0.4) node[below=2pt] {$\lab$};
  }
  \draw[leg] (i) .. controls +(0,0.55) and +(0,0.55)
              .. node[midway,copy,scale=1.8] {} (k);
  \draw[leg] (j) .. controls +(0,0.55) and +(0,0.55)
              .. node[midway,copy,scale=1.8] {} (l);
\end{tikzpicture}
}_{\indicator_{\{i=k\}}\indicator_{\{j=\ell\}}}
\,\,\,+\,\,\,
\underbrace{
\begin{tikzpicture}[baseline=-0.5ex]
  \foreach \x/\name/\lab in {0/i/i,0.7/j/j,1.4/k/k,2.1/l/\ell}{
    \coordinate (\name) at (\x,0);
    \draw[leg] (\name) -- ++(0,-0.4) node[below=2pt] {$\lab$};
  }
  \draw[leg] (i) .. controls +(0,0.55) and +(0,0.55)
              .. node[midway,copy,scale=1.8] {} (l);
  \draw[leg] (j) .. controls +(0,0.2) and +(0,0.2)
              .. node[midway,copy,scale=1.8] {} (k);
\end{tikzpicture}
}_{\indicator_{\{i=\ell\}}\indicator_{\{j=k\}}}
\end{equation*}
That is, we decompose \(\tN_d\) as the sum of the tensors
\begin{align*}
    \tW_d^{(1)} &\in \bbR^{d\times d\times d \times d} && \text{with entries } \tW_d^{(1)}(i,j,k,\ell) = \indicator_{\{i = j\}} \indicator_{\{k = \ell\}}, \\
    \tW_d^{(2)} &\in \bbR^{d\times d\times d \times d} && \text{with entries } \tW_d^{(2)}(i,j,k,\ell) = \indicator_{\{i = k\}} \indicator_{\{j = \ell\}}, \text{ and} \\
    \tW_d^{(3)} &\in \bbR^{d\times d\times d \times d} && \text{with entries } \tW_d^{(3)}(i,j,k,\ell) = \indicator_{\{i = \ell\}} \indicator_{\{ j = k\}}.
\end{align*}

In order to tap into the power of tensor diagrams, we want to represent \(\tN_d\) using a single tensor network.
So, we follow the approach discussed in \cref{sec:copy} where we stack these three tensors together and contract with a copy tensor.
That is, we define the order-5 \emph{Wick tensor} \(\tW_d\in\bbR^{d \times d \times d \times d \times 3}\) by stacking together the tensors $\tW_d^{(\alpha)}$.  That is,
\begin{equation} \label{eq:wick-tensor}
    \tW_d(:,:,:,:,\alpha) \defeq \tW_d^{(\alpha)} \quad \text{for } \alpha=1,2,3.
\end{equation}
The Gaussian fourth-moment tensor \(\tN_d\) is the result of summing the Wick tensor over its last index:
\begin{equation*}
    \tN_d = \sum_{\alpha=1}^3 \tW_d(:,:,:,:,\alpha) = \sum_{\alpha=1}^3 \tW_d(:,:,:,:,\alpha) \Delta^{(1)}(\alpha).
\end{equation*}
In a diagram, the Gaussian fourth-moment tensor is the contraction of the Wick tensor with a copy tensor of order one:
\begin{equation*}
    \tN_d = \E[\vg^{\otimes 4}]=\tnshow{E8}{.1}
\end{equation*}

\subsubsection{General case: A Gaussian tensor} \label{sec:gaussian_tensor}
\RAMdone
\ENEdone 
We now turn our attention to a tensor $\tG \in \bbR^{d_1\times d_2 \times \cdots \times d_t}$ of general dimensions \(d_1\times\cdots\times d_t\) populated with iid $\cN_{\bbR}(0,1)$ entries.
Up to scaling, the site tensors of an \rmps take this form with \(t\in\{2,3\}\).
Our task is to compute the fourth-moment tensor of $\tG$, defined as $\tN_{d_1\times \cdots \times d_t} \coloneqq \E[\tG^{\otimes 4}]$.
To proceed, let $\vi = (i_1,\ldots,i_t)$, $\vj = (j_1,\ldots,j_t)$, $\vk = (k_1,\ldots,k_t)$, and $\vl = (\ell_1,\ldots,\ell_t)$ denote multi-indices, so that
\begin{equation*}
    \tN_{d_1\times \cdots \times d_t}(\vi,\vj,\vk,\vl) \coloneqq\E[\tG(\vi)\tG(\vj)\tG(\vk)\tG(\vl)].
\end{equation*}
Applying the Isserlis--Wick theorem again, we expand the entries of the fourth-moment tensor as
\begin{equation*}
\E[\tG(\vi)\tG(\vj)\tG(\vk)\tG(\vl)]= \indicator_{\{\vi = \vj\}} \indicator_{\{\vk = \vl\}} + \indicator_{\{\vi = \vk\}} \indicator_{\{\vj = \vl\}} + \indicator_{\{\vi = \vl\}} \indicator_{\{\vj = \vk\}}.
\end{equation*}
Each indicator factors as a product.
For instance,
\begin{equation*}
    \indicator_{\{\vi = \vj\}}
    = \indicator_{\{(i_1,\ldots,i_t) = (j_1,\ldots,j_t)\}}
    = \indicator_{\{i_1=j_1\}} \times \cdots \times \indicator_{\{i_t=j_t\}}.
\end{equation*}
Inserting these identities and rearranging, we obtain
\begin{align*}
    \E[\tG(\vi)\tG(\vj)\tG(\vk)\tG(\vl)] &= \prod_{p=1}^t \indicator_{\{i_p = j_p\}}\indicator_{\{k_p = \ell_p\}} + \prod_{p=1}^t \indicator_{\{i_p = k_p\}}\indicator_{\{j_p = \ell_p\}} + \prod_{p=1}^t \indicator_{\{i_p = \ell_p\}}\indicator_{\{j_p = k_p\}} \\
    &= \prod_{p=1}^t \tW_{d_p}^{(1)}(i_p,j_p,k_p,\ell_p) + \prod_{p=1}^t \tW_{d_p}^{(2)}(i_p,j_p,k_p,\ell_p) + \prod_{p=1}^t \tW_{d_p}^{(3)}(i_p,j_p,k_p,\ell_p)\\
    &= \sum_{\alpha=1}^3\prod_{p=1}^t \tW_{d_p}(i_p,j_p,k_p,\ell_p,\alpha) 
\end{align*}
This expression contains a sum of tensors, so we can express it concisely using a copy tensor:
\[
    \tN_{d_1 \times \cdots \times d_t}(\vi,\vj,\vk,\vl)
    = \sum_{\alpha_1,\ldots,\alpha_t=1}^3
    \prod_{p=1}^t \tW_{d_p}(i_p,j_p,k_p,\ell_p,\alpha_p)
    \Delta^{(t)}(\alpha_1,\ldots,\alpha_t).
\]
That is, we are contracting \(t\) distinct Wick tensors with a single copy tensor.
Written using a tensor diagram,
\begin{equation*}
\tN_{d_1\times \cdots \times d_t}
    = \tnshow{E9}{.45}
\end{equation*}
By the fusion property (\cref{impthm:copy-tensors-fuse}), we can reformat this tensor in a suggestive way: 
\begin{equation} \label{eq:gaussian-tensor-diagram}
\tN_{d_1\times \cdots \times d_t}
   = \tnshow{E10}{.425}
\end{equation}

\subsubsection{Consequence: Tensor product of standard Gaussian vectors}
\label{sec:fourth-moment-of-kr-gaussian}

In anticipation of what is to follow, we now analyze the fourth-moment tensor for the vector $\vh = \vg^{(1)} \otimes \cdots \otimes \vg^{(t)}$ that is a tensor product of $t$ independent standard Gaussian vectors with dimensions $d^{s_1},\ldots,d^{s_t}$.
To start, consider a single base Gaussian vector $\vg^{(q)}\in\bbR^{d^{s_q}}$, which is a vector of exponentially large dimension $d^{s_q}$ and iid entries.
We can identify $\vg^{(q)}$ with a standard Gaussian tensor in $(\bbR^d)^{\otimes s_q}$.
Consequently, by \cref{eq:gaussian-tensor-diagram}, the fourth-moment tensor of $\vg^{(q)}$ is 
\begin{equation*}
\E[(\vg^{(q)})^{\otimes 4}]=\tnshow{E11}{0.425}
\end{equation*}
The fourth-moment tensor of $\vh = \vg^{(1)} \otimes \cdots \otimes \vg^{(t)}$ is then the tensor product of the fourth-moment tensors of the vectors $\vg^{(q)}$, corresponding to the following tensor diagram 
\begin{equation}
    \E[\vh^{\otimes 4}] =
  \tnshow{E12}{0.9}
    \label{eq:fourth-moment-of-h}
\end{equation}
This same tensor diagram will appear in our computation of the fourth-moment tensor of an \rmps in \cref{sec:fourth_moment_interpretation}.

\subsection{The fourth moment of an \rmps}
\label{ssec:rmps_fourth_moment}

\RAMdone
\ENEdone 
\CCdone
Recall that a Gaussian \rmps is a tensor network of the form
\begin{equation*}
    \vomega(i_1,\ldots,i_n) = \sum_{\alpha_1,\ldots,\alpha_{n-1} = 1}^\chi \tG_1(i_1,\alpha_1)\tG_2(\alpha_1,i_2,\alpha_2)\cdots \tG_{n-1}(\alpha_{n-2},i_{n-1},\alpha_{n-1})\tG_n(\alpha_{n-1},i_n).
\end{equation*}
Each \emph{site tensor} $\tG_p$ is populated with iid $\cN_{\bbR}(0,\sigma^2)$ entries with $\sigma^2 = \chi^{-1/2}$ when $p \in \{1,n\}$ and $\sigma^2 = \chi^{-1}$ otherwise.
Graphically, we have
\begin{equation*}
\label{eq:mps_diagram}
  \vomega = 
  \tnshow[5ex]{E5}{.5}.
  \vspace{-2em}
\end{equation*}
In this section, we compute the fourth-moment tensor $\tM \coloneqq \E[\vomega^{\otimes 4}]$ of \vomega.
We start this calculation in \cref{sec:fourth_moment_computation}.
Then, in \cref{sec:fourth_moment_interpretation}, we recognize that this fourth-moment tensor coincides with the fourth-moment tensor of a probabilistic mixture of Gaussian--Kronecker vectors.

\subsubsection{The fourth moment of an \rmps as a tensor network} \label{sec:fourth_moment_computation}

\begin{figure}[t]
    \centering
    \captionsetup[subfigure]{justification=centering}
\begin{subfigure}[b]{0.3\textwidth}
        \centering
\(\tnshow{E13}{.65}\)
        \caption{First site, \(\E[\tG_1^{\otimes 4}]\)\\~}
        \label{fig:fourth-moment-first-site}
    \end{subfigure}
    \hfill
\begin{subfigure}[b]{0.3\textwidth}
        \centering
\(\tnshow{E14}{.9}\)
        \caption{Interior sites, \(\E[\tG_p^{\otimes 4}]\),\\\(p\in\{2,\ldots,n-1\}\)}
        \label{fig:fourth-moment-middle-site}
    \end{subfigure}
    \hfill
\begin{subfigure}[b]{0.3\textwidth}
        \centering
\(\tnshow{E15}{.65}\)
        \caption{Last site, \(\E[\tG_n^{\otimes 4}]\)\\~}
        \label{fig:fourth-moment-last-site}
    \end{subfigure}
    
    \caption{
        Tensor networks for the first, interior, and last sites of the fourth moment of a Gaussian \rmps.
}
    \label{fig:fourth-moment-of-sites}
\end{figure}

\RAMdone
\ENEdone
\CCdone
The fourth-moment tensor of a Gaussian \rmps follows from the Gaussian fourth-moment calculations in \cref{sec:gaussian}.
Our analysis had assumed that our Gaussian tensor had entries with variance 1.
To account for the fact that the entries of an \rmps' site tensors \(\tG_1,\ldots,\tG_n\) have variance either \(\chi^{-1}\) or \(\chi^{-1/2}\), we introduce the \emph{scaled Wick tensor}
\begin{equation*}
    \overline{\tW}_\chi \coloneqq \frac{1}{\chi}\tW_\chi 
\end{equation*}
Recall the (unscaled) Wick tensor $\tW_\chi$ was defined in \cref{sec:gaussian-vector}.
We can use a combination of scaled and unscaled Wick tensors to express the fourth moment of the site tensors of an \rmps, as shown in \cref{fig:fourth-moment-of-sites}.
Recall that the first site tensor has dimension \(\tG_1\in\bbR^{d \times \chi}\), the interior site tensors have dimension \(\tG_p \in \bbR^{\chi \times d \times \chi}\), and the last site tensor has dimension \(\tG_n\in\bbR^{\chi \times d}\).

The fourth-moment tensor of an \rmps is then given by contracting fourth-moment tensors for the individual sites together.
We find that 
\begin{align*}
    \tM &= \tnshow{E16}{.8}
\end{align*}
We can simplify this tensor network further by contracting each pair of scaled Wick tensors \(\bar\tW_\chi\) together, resulting in the small matrix
\begin{equation*}
    \tnshow[.72ex]{E17}{.145}
    =\tnshow[.75ex]{E172}{.22} \in \bbR^{3\times 3}.
\end{equation*}
The entries of \mT are \(\mT(\alpha,\beta)=\frac{1}{\chi^2} \langle \tW_\chi^{(\alpha)},\tW_\chi^{(\beta)}\rangle\).
Direct computation
yields the formula
\begin{equation} \label{eq:T_def}
    \mT = \begin{bmatrix}
        1 & 1/\chi & 1/\chi \\
        1/\chi & 1 & 1/\chi \\
        1/\chi & 1/\chi & 1
    \end{bmatrix}.
\end{equation}
We have established the following theorem.

\begin{theorem}[Fourth-moment tensor of an \rmps]
    \label{thm:rmps-moment-tensor}
    When $\bbF = \bbR$, the fourth-moment tensor of a length $n$ \rmps with bond dimension $\chi$ is 
\begin{equation*}
        \tM = \tnshow{E18}{.55},
\end{equation*}
where \(\tW_d\) is the Wick tensor \cref{eq:wick-tensor} and \(\mT\) is defined in \cref{eq:T_def}.
\end{theorem}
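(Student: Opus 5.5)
The plan is to compute $\tM = \E[\vomega^{\otimes 4}]$ by exploiting independence of the cores. Writing $\vomega^{\otimes 4}$ entrywise, each factor $\vomega(\vi)$ is a sum over bond indices of products of core entries. Hence $\vomega^{\otimes 4}$ is the contraction of the tensor network whose vertices are $\tG_p^{\otimes 4}$ for $p=1,\ldots,n$: the four copies of the bond $\alpha_p$ (one per copy of $\vomega$) join $\tG_p^{\otimes 4}$ to $\tG_{p+1}^{\otimes 4}$. The cores are independent and contraction is multilinear, so expectation passes through the contraction. Therefore $\tM$ is the same network with each $\tG_p^{\otimes 4}$ replaced by $\E[\tG_p^{\otimes 4}]$. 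This is the network displayed just before the statement.

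Next I will justify the site moments in \cref{fig:fourth-moment-of-sites}. Fix an interior core. Then $\tG_p = \chi^{-1/2}\tG$ with $\tG$ standard Gaussian of size $\chi\times d\times\chi$, so $\E[\tG_p^{\otimes 4}] = \chi^{-2}\tN_{\chi\times d\times\chi}$. By \cref{eq:gaussian-tensor-diagram}, this is three Wick tensors $\tW_\chi,\tW_d,\tW_\chi$ joined through a single copy tensor. I will assign one factor $\chi^{-1}$ to each bond leg, which turns the two $\tW_\chi$ into $\overline{\tW}_\chi$. For the boundary cores the variance is $\chi^{-1/2}$, giving a factor $\chi^{-1}$ in the fourth moment. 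That factor is absorbed into the single bond Wick tensor, producing $\overline{\tW}_\chi$ again. The result is that every bond $\alpha_p$ carries two scaled Wick tensors, one from each neighboring site, contracted over their four $\chi$-legs. Their fifth legs attach to the copy tensors at sites $p$ and $p+1$.

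I then contract each such pair. The resulting $3\times 3$ matrix has entries $\mT(\alpha,\beta) = \chi^{-2}\langle \tW_\chi^{(\alpha)},\tW_\chi^{(\beta)}\rangle$. I evaluate these by counting. When $\alpha=\beta$, for example $\alpha=\beta=1$, the sum $\sum \indicator_{\{i=j\}}\indicator_{\{k=\ell\}}$ equals $\chi^2$. When $\alpha\ne\beta$, for example $\alpha=1,\beta=2$, the constraints $i=j$, $k=\ell$, $i=k$, $j=\ell$ force all four indices equal, giving $\chi$. This yields \cref{eq:T_def}, and the network reduces to a chain: each site carries a $\tW_d$ attached to a copy tensor, and consecutive copy tensors are linked by $\mT$. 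That is the claimed diagram.

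The main obstacle is bookkeeping rather than any conceptual difficulty. I need to verify that the four copies of each bond index are matched consistently between the Wick tensors of adjacent sites, meaning the $(i,j,k,\ell)$ slot ordering is aligned. I also need to confirm that the variance normalizations distribute exactly, one $\chi^{-1}$ per bond Wick tensor, including at the boundary. I will check this by verifying the trivial case $n=2$ entrywise against Isserlis's theorem.
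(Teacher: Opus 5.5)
Your proposal is correct and follows essentially the same route as the paper. You pass the expectation through the contraction using independence of the cores, and you write each site's fourth moment via \cref{eq:gaussian-tensor-diagram}, with the variance normalization absorbed into scaled Wick tensors $\overline{\tW}_\chi$ on the bond legs (including the boundary factor $\chi^{-1}$, as in \cref{fig:fourth-moment-of-sites}). You then contract each adjacent pair into the $3\times 3$ matrix $\mT$, and your entries $1$ on the diagonal and $1/\chi$ off it are correct.
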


\subsubsection{Fourth moment of an \rmps as a probabilistic mixture} \label{sec:fourth_moment_interpretation}

\ENEdone 
\CCdone
In \cref{thm:rmps-moment-tensor}, we have obtained an exact description of the fourth-moment tensor of an \rmps as a tensor diagram.
However, it is not obvious how to use this diagram to bound the variance of the quadratic form \(\vomega^\top\mA\vomega\).
In this section, we use this tensor diagram to construct a random vector \(\vh_{\vb}\) with two crucial properties.
First, the fourth-moment tensor for \(\vh_{\vb}\) will coincide with that of an \rmps \vomega, so that $\Var(\vomega^\top\mA\vomega) = \Var(\vh_{\vb}^\top\mA\vh_{\vb}^{\vphantom{\top}})$.
Second, by a direct argument, we can bound the variance of the quadratic form \(\vh_{\vb}^\top\mA\vh_{\vb}^{\vphantom{\top}}\).

We begin by analyzing the diagram in \cref{thm:rmps-moment-tensor} for the fourth-moment of an \rmps.
Observe that the matrix \mT can be realized as a convex combination
\begin{equation}
    \mT = \frac{1}{\chi} \mT_1 + \left( 1 - \frac{1}{\chi} \right)\mT_0
    \label{eq:T-matrix-convex-comb}
\end{equation}
where
\begin{equation}
    \mT_1 \coloneqq \vone\vone^\top =\tnshow{E20}{.155} \quad \text{and} \quad \mT_0 \coloneqq \Id = \tnshow{E19}{.125}.
    \label{eq:t-matrices-diagrams}
\end{equation}
The convex combination in \cref{eq:T-matrix-convex-comb} suggests a probabilistic interpretation where we view \mT as the expected value $\mT = \E[\mS]$ of a random matrix \mS that takes values $\mT_0$ and $\mT_1$ with appropriate probabilities:
\begin{equation*}
    \mS = \begin{cases}
        \mT_1 & \text{with probability } 1/\chi, \\
        \mT_0 & \text{with probability } 1-1/\chi.
    \end{cases}
\end{equation*}
Such a random matrix can be generated by drawing a random variable $b \sim \bernoulli(1/\chi)$ and setting $\mS \coloneqq \mT_b$.
The variable $b$ represents a biased coin flip that has probability $1/\chi$ of being heads.

Let us now apply this insight at the level of the entire fourth-moment tensor.
Draw random variables
\begin{equation} \label{eq:random_cuts}
    \vb = (b_1,\ldots,b_{n-1}) \quad \text{with } b_1,\ldots,b_{n-1} \sim \bernoulli(1/\chi) \text{ iid,}
\end{equation}
and consider the random tensor: 
\begin{equation} \label{eq:tM_b_def}
    \tM_{\vb} \coloneqq \tnshow{E21}{.55}
\end{equation}
By linearity of expectation,
\begin{equation*}
    \E[\tM_{\vb}] = \tM,
    \label{eq:expected-tMb-is-tM}
\end{equation*}
where the expectation is taken over the randomness in the biased coin flips $\vb$.

Now, fix a realization \(\vb\in\{0,1\}^{n-1}\), and let us examine the resulting tensor \(\tM_{\vb}\).
Whenever \(b_p=1\), we have \(\mT_{b_p} = \mT_1\).
As shown above in \cref{eq:t-matrices-diagrams}, \(\mT_1\) is the tensor product of two copy tensors.
Substituting \(\mT_{b_p} = \mT_1\) into \cref{eq:tM_b_def} has the effect of ``cutting'' the network into two separate parts between sites $p$ and $p+1$: 
\begin{equation*}
\tnshow{E22}{.275}
=
\tnshow{E23}{.275}
=
\tnshow{E24}{.275}
\end{equation*}
We have used the copy tensor fusion property (\cref{impthm:copy-tensors-fuse}) to simplify the diagram.

When we have \(b_p=0\), the matrix \(\mT_{b_p} = \mI\) is the identity matrix, and the tensor network remains connected between sites $p$ and $p+1$: 
\begin{equation*}
\tnshow{E25}{.275}
=
\tnshow{E26}{.275}
=
\tnshow{E27}{.275}
\end{equation*}
We have again used the fusion property.

Repeating this process for each matrix $\mT_{b_p}$, we see that \(\tM_{\vb}\) splits into $t = 1 + |\{p : b_p = 1\}|$ pieces.
For instance, with $n = 6$ and $\vb = (0,0,1,0,1)$, we have
\begin{equation}
    \tM_{\vb}=\tnshow{E28}{0.7}
    \label{eq:example-cut-tensor}
\end{equation}
Recalling the tensor network diagram shown in \cref{eq:fourth-moment-of-h}, we recognize that the network in \cref{eq:example-cut-tensor} equals the fourth-moment tensor of a Gaussian Kronecker vector \(\vh = \vg^{(1)} \otimes \vg^{(2)} \otimes \vg^{(3)}\) where the base vectors have dimensions \(\vg^{(1)}\in\bbR^{d^3}\), and \(\vg^{(2)}\in\bbR^{d^2}\), and \(\vg^{(3)} \in \bbR^{d}\).

Let us generalize and formalize this observation.
For any \(t\in\{1,\ldots,n\}\), consider any vector \(\vb\in\{0,1\}^{n-1}\) with \(t-1\) entries equal to 1.
Let \(p_1,\ldots,p_{t-1}\) be the indices where \(\vb\) is nonzero.
Then, we can interpret \((p_1,\ldots,p_{t-1})\) as instructions on how to cut the sequence \((1,\ldots,n)\) into \(t\) disjoint subsequences:
\begin{equation*}
    (1,\ldots,n)
    = \underbrace{(1,\ldots,p_1)}_{s_1 \text{ elements}}
    \cup \underbrace{(p_1+1,\ldots,p_2)}_{s_2 \text{ elements}} \cup
    \cdots
    \cup \underbrace{(p_{t-1}+1,\ldots,n)}_{s_{t} \text{ elements}}.
\end{equation*}
Here, \(s_q\) denotes the length of the \(q\)th subsequence.
We can then express \(\tM_{\vb}\) in terms of this composition as 
\begin{equation*}
    \tnshow{E29}{1}
\end{equation*}

The previous display coincides with the tensor diagram \cref{eq:fourth-moment-of-h} for a tensor product of Gaussians of lengths $d^{s_1},\ldots,d^{s_t}$.
We conclude that \(\tM_{\vb}\) is the fourth-moment tensor for the Gaussian-Kronecker vector
\begin{equation} \label{eq:cut_gaussian}
    \E\big[\vh_{\vb}^{\otimes 4} \mid \vb\big] = \tM_{\vb}^{\vphantom{\otimes 4}} \quad \text{where }\vh_{\vb} \coloneqq \vg^{(1)} \otimes \vg^{(2)} \otimes \cdots \otimes \vg^{(t)} \text{ and } \vg^{(q)} \sim \cN_{\bbR}(\vzero,\Id_{d^{s_q}}).
\end{equation}
Since both the number of terms $t$ and the sizes of the individual terms $\vg^{(q)}$ are random, we call $\vh_{\vb}$ a \emph{randomly fractured Gaussian vector}.
Combining this expression with the identity \(\E[\tM_{\vb}] = \tM\) and the tower rule, we conclude that
\begin{equation*}
\E\big[\vh_{\vb}^{\otimes 4}\big] = \E\big[\tM_{\vb}^{\vphantom{\otimes 4}}\big] = \tM.
\end{equation*}
We have shown that the \rmps \vomega has the same fourth-moment tensor as the fractured Gaussian vector \(\vh_{\vb}\).
\begin{theorem}[\rmps as a randomly fractured Gaussian]
    \label{thm:gaussian-surrogate}
    Let \(\vomega\) be an \rmps of bond dimension \(\chi\).
    Sample \(b_1,\ldots,b_{n-1} \sim \bernoulli(1/\chi)\), let \(\vs=(s_1,\ldots,s_t)\) be the corresponding composition, and construct the fractured Gaussian vector
    \[
        \vh_{\vb}^{\vphantom{(1)}} \coloneqq \vg^{(1)} \otimes \vg^{(2)} \otimes \cdots \otimes \vg^{(t)} \quad \text{where } \vg^{(q)} \sim \cN_{\bbR}(\vzero,\Id_{d^{s_q}}).
    \]
    Then, the first four moments of \vomega and \(\vh_{\vb}\) agree:
    \[
        \E[\vomega]
        =\E[\vh_{\vb}] 
        = \vec0,
        \quad
        \E[\vomega\vomega^\top]
        =\E[\vh_{\vb}^{\vphantom{\top}}\vh_{\vb}^\top] 
        = \mI, \quad 
        \E[\vomega^{\otimes 3}]
        =\E[\vh_{\vb}^{\otimes 3}] 
        = \vec0,
        \quad
\E[\vomega^{\otimes 4}]
        =\E[\vh_{\vb}^{\otimes 4}].
    \]
    In particular, we have \(\Var(\vomega^\top\mA\vomega) = \Var(\vh_{\vb}^\top\mA\vh_{\vb}^{\vphantom{*}})\) for all matrices \(\mA\in\bbR^{d^n \times d^n}\).
\end{theorem}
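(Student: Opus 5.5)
The fourth-moment identity $\E[\vomega^{\otimes 4}] = \E[\vh_{\vb}^{\otimes 4}]$ has essentially been established in the preceding discussion. I would first record it cleanly. By \cref{thm:rmps-moment-tensor}, $\tM$ is the diagram with the matrix $\mT$ placed on each of the $n-1$ bonds. Each bond factor enters the diagram linearly, so substituting the convex decomposition \cref{eq:T-matrix-convex-comb} independently on every bond expands $\tM$ as
\[
\tM = \sum_{\vb\in\{0,1\}^{n-1}} \Bigl(\prod_{p}\chi^{-b_p}(1-\chi^{-1})^{1-b_p}\Bigr)\tM_{\vb} = \E[\tM_{\vb}].
\]
For each fixed $\vb$, I would then use the fusion property (\cref{impthm:copy-tensors-fuse}) to show that $\tM_{\vb}$ matches diagram \cref{eq:fourth-moment-of-h} with blocks of sizes $s_1,\ldots,s_t$. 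On bonds with $b_p=0$, fusion merges the copy tensors into one copy tensor spanning the block. On bonds with $b_p=1$, the factor $\vone\vone^\top$ splits into two order-one copy tensors, and these cap off the adjacent blocks. The diagram \cref{eq:fourth-moment-of-h} is exactly $\E[\vh_{\vb}^{\otimes 4}\mid\vb]$, as computed in \cref{sec:gaussian_tensor,sec:fourth-moment-of-kr-gaussian}. The tower rule then gives $\E[\vh_{\vb}^{\otimes 4}] = \tM$. The main care point is the boundary scalings: the variances $\chi^{-1/2}$ at the end sites and $\chi^{-1}$ in the interior must combine so that every bond carries exactly $\chi^{-2}\langle\tW^{(\alpha)}_\chi,\tW^{(\beta)}_\chi\rangle$. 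This works because each bond collects $\chi^{-1/2}\cdot\chi^{-1/2}$ per adjacent core in the squared form; checking this bookkeeping is the step I would verify most carefully.

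The lower moments are quick. For odd orders, $\vomega$ is a multilinear function of the independent cores, and flipping the sign of $\tG_1$ leaves its distribution invariant while mapping $\vomega\mapsto-\vomega$. Hence $\E[\vomega]=\vzero$ and $\E[\vomega^{\otimes 3}]=\vzero$. The same symmetry applies to $\vh_{\vb}$ by flipping $\vg^{(1)}$ conditionally on $\vb$. For the second moment of $\vomega$, I would contract $\E[\tG_p\otimes\tG_p]$ site by site. Each core contributes $\sigma_p^2$ times identity tensors, and the bond sums produce factors of $\chi$. The total scalar is $\chi^{-1/2}\cdot\chi^{-(n-2)}\cdot\chi^{-1/2}\cdot\chi^{n-1}=1$, giving $\E[\vomega\vomega^\top]=\Id$. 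For $\vh_{\vb}$, conditionally on $\vb$ we get $\E[\vh_{\vb}\vh_{\vb}^\top\mid\vb]=\bigotimes_q \Id_{d^{s_q}}=\Id$, and averaging over $\vb$ preserves this.

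Finally, the variance claim follows from the formula already derived:
\[
\Var(\vomega^\top\mA\vomega)=\langle\mA^{\otimes 2},\tM\rangle-\tr(\mA)^2 .
\]
The same computation holds verbatim for $\vh_{\vb}$, since it uses only the second and fourth moments. Because these moments agree between $\vomega$ and $\vh_{\vb}$, the two variances coincide for every $\mA$.
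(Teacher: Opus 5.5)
Your proposal is correct and follows the paper's argument. You expand each bond matrix as $\mT = \frac{1}{\chi}\mT_1 + (1-\frac{1}{\chi})\mT_0$ to get $\tM = \E[\tM_{\vb}]$, use copy-tensor fusion to identify each $\tM_{\vb}$ with the fourth-moment diagram of the Gaussian--Kronecker vector $\vh_{\vb}$, and finish with the tower rule. Your sign-flip argument for the odd moments and your site-by-site covariance computation supply the lower-order identities, which the paper states without detail.
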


\Cref{thm:gaussian-surrogate} justifies our claim that an \rmps behaves like a standard Gaussian vector when, and only when, \(\chi \gtrsim n\).
Indeed, the fractured Gaussian vector \(\vh_{\vb}\) is cut between each pair of adjacent sites with probability \(1/\chi\).
So, with probability \((1-1/\chi)^n\), no cuts occur.
We identify three cases:
\begin{itemize}
    \item When \(\chi \ll n\), the probability of having no cuts tends to zero in the limit $n\to\infty$.
    As such, an \rmps acts like a Gaussian--Kronecker vector in this regime.
    \item When \(\chi \asymp n\), there is a constant probability of having no cuts, in which case \(\vh_{\vb}\) has a standard Gaussian distribution.  In this case, an \rmps often acts like a standard Gaussian vector.
    \item When \(\chi \asymp n/\eps\), the probability of having no cuts is \(1-\eps\).  Thus, \(\vh_{\vb}\) has a standard Gaussian distribution a majority of the time; the \rmps acts even more like a standard Gaussian vector.
\end{itemize}

\subsection{The quadratic form induced by a random MPS}

\ENEdone
\CCdone
We have computed the exact variance of a Gaussian \rmps and identified a surrogate vector \(\vh_{\vb}\) with the same fourth-moment tensor.
Variance bounds for the quadratic form $\vomega^\top\mA\vomega$ (\cref{thm:rmps_variance_intro}) now follow almost immediately.
First, we import a variance inequality for tensor products, due to Meyer \& Avron \cite{meyer_hutchinsons_2025}:
\begin{importedtheorem}[Variance bound for tensor products \protect{\cite[Thm.~1.1]{meyer_hutchinsons_2025}}] \label{impthm:meyer_avron}
    Let $\vh \coloneqq \vg^{(1)}\otimes \cdots \otimes \vg^{(t)}$ be a tensor product of independent Gaussian vectors $\vg^{(q)} \sim \cN_{\bbR}(\vzero,\Id_{d_q})$, and let $\mA$ be a square matrix.
    Then
\begin{equation} \label{eq:meyer-avron}
        \Var(\vh^\top \mA\vh) \le 2^t \norm{\mA}_{\rm F}^2 + (3^t - 2^t - 1) \norm{\mA}_*^2.
    \end{equation}
\end{importedtheorem}

Meyer \& Avron's original work contains an \emph{exact} expression for the variance $\Var(\vh^\top \mA\vh)$.
This expression involves a weighted sum of the squared Frobenius norm of all possible partial traces of a symmetrized version of the matrix \mA.
We obtain a simple bound by noting that the Frobenius norm of any partial trace is bounded by the nuclear norm.
Meyer \& Avron's paper also focused on the case of Gaussian vectors with common dimension $d_1 = \cdots = d_t$; the same proof carries over to general dimensions.

\Cref{impthm:meyer_avron} yields a variance bound for the quadratic form determined by a Gaussian \rmps.

\begin{theorem}[Variance bound for a Gaussian \rmps] \label{thm:rmps_variance}
    Let $\vomega$ be a Gaussian \rmps of bond dimension \(\chi\), and let $\mA$ be a real square matrix of dimension $d^n$.
    Then
\begin{equation*}
        \Var(\vomega^\top \mA \vomega) \le 2\left(1 + \frac{1}{\chi}\right)^{n-1} \norm{\mA}_{\rm F}^2 + \left[3 \left(1 + \frac{2}{\chi}\right)^{n-1} - 2 \left(1 + \frac{1}{\chi}\right)^{n-1} - 1\right] \norm{\mA}_*^2.
    \end{equation*}
\end{theorem}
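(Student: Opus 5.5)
We will prove the variance bound by replacing $\vomega$ with the randomly fractured Gaussian vector $\vh_{\vb}$ of \cref{thm:gaussian-surrogate}. That theorem gives $\Var(\vomega^\top\mA\vomega) = \Var(\vh_{\vb}^\top\mA\vh_{\vb}^{\vphantom{\top}})$, so it suffices to bound the latter variance. We decompose it with the law of total variance, conditioning on the cut pattern $\vb$:
\begin{equation*}
    \Var(\vh_{\vb}^\top\mA\vh_{\vb}^{\vphantom{\top}}) = \E\bigl[\Var(\vh_{\vb}^\top\mA\vh_{\vb}^{\vphantom{\top}} \mid \vb)\bigr] + \Var\bigl(\E[\vh_{\vb}^\top\mA\vh_{\vb}^{\vphantom{\top}}\mid \vb]\bigr).
\end{equation*}
Conditional on $\vb$, the vector $\vh_{\vb}$ is a tensor product of independent standard Gaussian vectors, so it is isotropic. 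Hence $\E[\vh_{\vb}^\top\mA\vh_{\vb}^{\vphantom{\top}}\mid\vb] = \tr(\mA)$ for every realization, and the second term vanishes. (Equivalently, $\E[(\vh_{\vb}^\top\mA\vh_{\vb})^2] = \E_{\vb}\E[\,\cdot\mid\vb]$ by the tower rule, and the mean is deterministic.)

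For the first term, fix $\vb$ with $t = 1 + |\vb|$ blocks, where $|\vb| = \sum_p b_p$. \Cref{impthm:meyer_avron} applies with block dimensions $d^{s_1},\ldots,d^{s_t}$; as the paper remarks, unequal dimensions are allowed. It yields
\begin{equation*}
    \Var(\vh_{\vb}^\top\mA\vh_{\vb}^{\vphantom{\top}}\mid\vb) \le 2^{1+|\vb|}\norm{\mA}_{\rm F}^2 + \bigl(3^{1+|\vb|} - 2^{1+|\vb|} - 1\bigr)\norm{\mA}_*^2.
\end{equation*}
We then average over $\vb$. Since $|\vb| \sim \mathrm{Binomial}(n-1,1/\chi)$, the probability generating function gives
\begin{equation*}
    \E[c^{|\vb|}] = \Bigl(1 - \tfrac1\chi + \tfrac c\chi\Bigr)^{n-1} = \Bigl(1 + \tfrac{c-1}{\chi}\Bigr)^{n-1}.
\end{equation*}
Taking $c=2$ gives $\E[2^{1+|\vb|}] = 2(1+1/\chi)^{n-1}$, and taking $c=3$ gives $\E[3^{1+|\vb|}] = 3(1+2/\chi)^{n-1}$. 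Substituting these into the conditional bound, which is linear in $2^{t}$ and $3^{t}$, produces exactly the stated coefficients.

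The argument is short because the hard work was done earlier. The only points needing care are these: the imported bound must hold for blocks of unequal dimension, and conditional isotropy must make the between-$\vb$ variance vanish. The main conceptual obstacle is the equality of variances in \cref{thm:gaussian-surrogate}, which we are allowed to assume. The variance depends only on the second and fourth moments, and both of these agree between $\vomega$ and $\vh_{\vb}$.
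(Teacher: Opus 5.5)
Your proposal is correct and matches the paper's proof essentially step for step: you pass to the fractured Gaussian surrogate $\vh_{\vb}$, apply the Meyer--Avron bound conditionally on $\vb$, and average via the law of total variance using the binomial identity $\E[c^{t}] = c(1+(c-1)/\chi)^{n-1}$. Your explicit remark that the between-$\vb$ term vanishes by conditional isotropy spells out a detail the paper leaves implicit.
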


\begin{proof}
    Introduce the fractured Gaussian vector \(\vh_{\vb}\) from \cref{thm:gaussian-surrogate}.
    Let $\vs = (s_1,\ldots,s_t)$ denote the composition associated with \(\vh_{\vb}\).
    The size $t$ of the composition is a random variable, and $t-1$ satisfies a binomial distribution with $n-1$ trials and success probability $1/\chi$.
    In particular,
\begin{equation} \label{eq:binomial_power_expectation}
        \E[a^t] = a\left(1 + \frac{a-1}{\chi} \right)^{n-1} \quad \text{for any } a \in\bbR.
    \end{equation}
By \cref{impthm:meyer_avron}, we have the bound
\begin{equation*}
        \Var(\vh_{\vb}^\top \mA \vh_{\vb}^{\vphantom{\top}} \mid \vb) \le 2^t \norm{\mA}_{\rm F}^2 + (3^t - 2^t - 1) \norm{\mA}_*^2.
    \end{equation*}
The theorem follows by applying the law of total variance with respect to $\vb$ and invoking \cref{eq:binomial_power_expectation}.
\end{proof}

\subsection{The complex case} \label{sec:complex}

\ENEdone 
\RAMdone
\CCdone
The complex case is similar to the real case, up to a few small modifications.
Let $\tG_{\bbC}$ be a standard Gaussian tensor over the complex field of dimensions $d_1\times \cdots \times d_t$.
Its fourth-moment tensor is defined as
\begin{equation} \label{eq:complex-fourth-moment}
    \tN^{\bbC}_{d_1\times \cdots \times d_t} \coloneqq \E[\tG_{\bbC}^{\otimes 2} \otimes \overline{\tG}_{\bbC}^{\otimes 2}].
\end{equation}
By the Isserlis--Wick theorem, the entries of \(\tN_{d_1\times \cdots \times d_t}^{\bbC}\) are
\begin{equation*}
    \tN^{\bbC}_{d_1\times \cdots \times d_t}(\vi,\vj,\vk,\vl) = \E[\tG(\vi)\tG(\vj)\overline{\tG(\vk) \tG(\vl)}] = \indicator_{\{\vi = \vk\}} \indicator_{\{\vj = \vl\}} + \indicator_{\{\vi = \vl\}} \indicator_{\{\vj = \vk\}}.
\end{equation*}
Observe that there are now two terms, rather than the three in the expression \cref{eq:gaussian-tensor-indicators} in the real case.
Proceeding as we did in \cref{sec:gaussian_tensor}, we realize $\tN^{\bbC}_{d_1\times \cdots \times d_t}$ as a tensor diagram 
\begin{equation*}
    \tN^{\bbC}_{d_1 \times \cdots \times d_t}
    = \tnshow{E30}{0.4}= \tnshow{E31}{0.4},
\end{equation*}
where the complex Wick tensor $\tW_d^{\bbC}\in \{0,1\}^{d\times d\times d\times d\times 2}$ is now the stacking of just two tensors:
\begin{equation*}
\tW_d^{\bbC}(i,j,k,\ell,1) = \indicator_{\{i = k\}} \indicator_{\{j = \ell\}} \quad \text{and} \quad \tW_d^{\bbC}(i,j,k,\ell,2) = \indicator_{\{i = \ell\}} \indicator_{\{j = k\}}.
\end{equation*}
Observe that $\tW_d^{\bbC}$ has fifth mode of dimension two, not three.
The computation of the fourth-moment tensor of a complex \rmps 
\begin{equation*}
    \tM^{\bbC} = \E[\vomega^{\otimes 2} \otimes \overline{\vomega}^{\otimes 2}]
\end{equation*}
proceeds as in the real case.
Since the Wick tensors have final dimension two instead of three, the matrix $\mT^{\bbC}$ now has dimension $2 \times 2$:
\begin{equation*}
    \mT^{\bbC} = \begin{bmatrix}
        1 & 1/\chi \\
        1/\chi & 1
    \end{bmatrix}.
\end{equation*}
The rest of the argument is identical.

\begin{theorem}[Fourth-moment tensor of an \rmps, complex case]
    When $\bbF = \bbC$, the fourth-moment tensor of an \rmps with bond dimension \(\chi\) is
\begin{equation*}
        \tM^{\bbC} = \tnshow{E32}{0.55}.
    \end{equation*}
This fourth-moment tensor admits an interpretation as a probabilistic mixture of complex Gaussian--Kronecker vectors.
    Namely, sample \(b_1,\ldots,b_{n-1} \sim \bernoulli(1/\chi)\), let \(\vs=(s_1,\ldots,s_t)\) be the corresponding composition, and construct the vector
    \[
        \vh_{\vb} \coloneqq \vg^{(1)} \otimes \vg^{(2)} \otimes \cdots \otimes \vg^{(t)} \quad \text{where } \vg^{(q)} \sim \cN_{\bbC}(\vzero,\Id_{d^{s_q}}).
    \]
Then, the first four moments of \vomega and \(\vh_{\vb}\) agree:
    \begin{align*}
        &\E[\vomega]
        =\E[\vh_{\vb}] 
        = \vec0,
        \quad
        &&\E[\vomega\vomega^*]
        =\E[\vh_{\vb}^{\vphantom{*}}\vh_{\vb}^*] 
        = \mI, \\ 
        &\E[\vomega^{\otimes 3}]
        =\E[\vh_{\vb}^{\otimes 3}] 
        = \vec0,
        \quad
        &&\E[\vomega^{\otimes 2} \otimes \overline{\vomega}^{\otimes 2}]
        =\E[\vh_{\vb}^{\otimes 2} \otimes \overline{\vh}_{\vb}^{\otimes 2}].
    \end{align*}
    Consequently, for any matrix \mA of dimension $d^n$,
    \[
        \Var(\vomega^*\mA\vomega)
        \leq \norm{\mA}_{\rm F}^2 + 2\left[ \left(1 + \frac{1}{\chi}\right)^{n-1} - 1\right] \norm{\mA}_*^2.
    \]
\end{theorem}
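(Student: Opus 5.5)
The plan follows the real case, replacing three-term Wick structure with two-term. First, I would compute the single-core complex fourth moment. For a complex Gaussian core with variance $\sigma^2$, the tensor $\E[\tG^{\otimes 2}\otimes\overline{\tG}^{\otimes 2}]$ is $\sigma^4$ times the diagram built from complex Wick tensors $\tW^{\bbC}$ joined by a copy tensor. This holds because the pseudo-covariance $\E[g^2]$ vanishes, so the $\{\vi=\vj\}\{\vk=\vl\}$ pairing drops out.

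Next, I would contract adjacent cores along each bond. Each pair of scaled complex Wick tensors contracts to $\mT^{\bbC}(\alpha,\beta)=\chi^{-2}\langle \tW^{\bbC,(\alpha)}_\chi,\tW^{\bbC,(\beta)}_\chi\rangle$. The diagonal entries are $\chi^2/\chi^2=1$, and the off-diagonal entries are $\chi/\chi^2=1/\chi$. The boundary variances $\chi^{-1/2}$ are absorbed exactly as in the real case. This gives the displayed diagram for $\tM^{\bbC}$.

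To obtain the mixture interpretation, I would write $\mT^{\bbC}=\chi^{-1}\vone\vone^\top+(1-\chi^{-1})\Id_2$. With iid $b_p\sim\bernoulli(1/\chi)$, the fusion argument of \cref{sec:fourth_moment_interpretation} then carries over verbatim, cutting the network wherever $b_p=1$. Each piece is the fourth moment $\E[\vg^{\otimes2}\otimes\overline{\vg}^{\otimes2}]$ of a complex Gaussian on $d^{s_q}$ coordinates, since a complex Gaussian tensor's moment diagram fuses in the same way. Hence $\E[\vh_{\vb}^{\otimes2}\otimes\overline{\vh}_{\vb}^{\otimes2}\mid\vb]=\tM^{\bbC}_{\vb}$, and the tower rule gives the identity. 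The lower moments are immediate: odd moments vanish by the symmetry $\vomega\mapsto-\vomega$ (flip the sign of one core), and $\E[\vomega\vomega^*]=\Id$ for both vectors by independence and the variance normalization.

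For the variance bound, I would need a complex analogue of \cref{impthm:meyer_avron}. For $\vh=\vg^{(1)}\otimes\cdots\otimes\vg^{(t)}$ with complex Gaussian factors, I would aim for
\[
\Var(\vh^*\mA\vh)\le\norm{\mA}_{\rm F}^2+(2^t-2)\norm{\mA}_*^2.
\]
To get it, expand $\E|\vh^*\mA\vh|^2=\langle \mA\otimes\overline{\mA},\cdot\rangle$ over the $2^t$ choices of Wick pairing per factor. The all-"direct" choice gives $|\tr\mA|^2$, the all-"crossed" choice gives $\norm{\mA}_{\rm F}^2$, and each mixed choice is the squared Frobenius norm of a partial trace of $\mA$, which is at most $\norm{\mA}_*^2$. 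The remaining step is to average over $\vb$ with the law of total variance, using $\E[2^t]=2(1+1/\chi)^{n-1}$ from \cref{eq:binomial_power_expectation}; this gives $\norm{\mA}_{\rm F}^2+2[(1+1/\chi)^{n-1}-1]\norm{\mA}_*^2$. The conditional means all equal $\tr\mA$, so there is no between-group variance term. I expect the main obstacle to be this complex Meyer--Avron step, specifically verifying that every mixed pairing really is a partial-trace Frobenius norm bounded by $\norm{\mA}_*^2$ with coefficient one. I would first check it by diagrams for $t=2$.
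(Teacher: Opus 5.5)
Your proposal is correct and follows the paper's route: two-term complex Wick tensors for each core, contraction to the $2\times 2$ matrix $\mT^{\bbC}$, the Bernoulli splitting $\mT^{\bbC}=\chi^{-1}\vone\vone^\top+(1-\chi^{-1})\Id$ that produces the fractured complex Gaussian, and then a tensor-product variance bound averaged by the law of total variance with $\E[2^t]=2(1+1/\chi)^{n-1}$. The step you flagged as the main obstacle is the one the paper leaves implicit under ``the rest of the argument is identical,'' and it goes through exactly as you sketch: the subset $S$ of crossed pairings contributes $\norm{\tr_{S^c}\mA}_{\rm F}^2$, the cases $S=\emptyset$ and $S=[t]$ give $|\tr\mA|^2$ and $\norm{\mA}_{\rm F}^2$, and each of the remaining $2^t-2$ terms is at most $\norm{\mA}_*^2$ because the partial trace is contractive in the nuclear norm.
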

 \section{Randomized linear algebra with \trps and beyond}
\label{sec:quadratic-form-condition}

\ENEdone
\CCdone
In the previous section, we proved \cref{thm:rmps_variance_intro}, establishing bounds on the variance of the quadratic form $\vomega^*\mA\vomega$ induced by an \rmps $\vomega$.
In this section, we employ this bound to analyze several randomized matrix algorithms.
All of these arguments remain valid for a generic random test vector \vomega,
provided a similar estimate for the quadratic form.

\subsection{The quadratic form condition}
\CCdone
\ENEdone 
To state results in generality, consider a test matrix of the form
\begin{equation} \label{eq:general-omega}
\mOmega \coloneqq \frac{1}{\sqrt{k}}
    \begin{bmatrix}
    \mid & \mid &        & \mid  \\
    \vomega_1 & \vomega_2 & \cdots & \vomega_k \\
    \mid & \mid &        & \mid  
    \end{bmatrix} 
    \in\bbF^{N \times k},
\end{equation}
assembled from iid copies $\vomega_1,\ldots,\vomega_k$ of an isotropic random vector $\vomega$ that meets the following condition.

\begin{definition}[Quadratic form condition] \label{def:qf-cond}
    An isotropic random vector $\vomega$ satisfies the \emph{quadratic form condition} with parameters $a, b \geq 0$ when
    it admits the bound
\begin{equation*}
        \Var(\vomega^* \mA\vomega) \le a \norm{\mA}_{\rm F}^2 + b \tr(\mA)^2 \quad \text{for every psd matrix } \mA.
    \end{equation*}
A test matrix \mOmega satisfies the quadratic form condition if it takes the form \cref{eq:general-omega} for iid vectors $\vomega_1,\ldots,\vomega_k$ satisfying the quadratic form condition.
\end{definition}

In the limit as \(N \to \infty\), any isotropic random vector must have parameters \(a + b \geq 2\) when \(\bbF=\bbR\) and \(a+b\geq1\) when \(\bbF=\bbC\) \cite{epperly23}.
Since \(\norm{\mA}_{\rm F} \leq \tr(\mA)\) for psd \mA,
the condition is less sensitive to the
size of $a$ than the size of $b$.
Gaussian vectors are near-optimal choices for the quadratic form condition,
as their parameters satisfy \begin{equation*}
    a = \begin{cases}
        2 & \text{if } \bbF = \bbR \\
        1 & \text{if } \bbF = \bbC
    \end{cases} \quad \text{and} \quad b = 0.
\end{equation*}
Meanwhile, \cref{thm:rmps_variance} shows that a real-valued \rmps satisfies the quadratic form condition with
\begin{equation*}
    a = 2 \left( 1 + \frac{1}{\chi} \right)^{n-1} \quad \text{and} \quad b = 3\left(1 + \frac{2}{\chi}\right)^{n-1} - 2\left(1 + \frac{1}{\chi}\right)^{n-1} - 1.
\end{equation*}
When $\chi / n$ is large, an \rmps has quadratic form properties similar with those of a standard Gaussian vector (cf.\ the discussion in \cref{sec:trace-guarantees}).
Concretely, in the real field, \(\chi \asymp n\) yields \(a \asymp b \asymp 1\) while \(\chi \asymp n/\eps\) yields \(a \leq 2+\eps\) and \(b \leq \eps\).
Analogous statements hold in the complex field.

\subsection{Subspace injections} \label{sec:osi}

\ENEdone 
\CCdone
Any test matrix $\mOmega$ with isotropic columns~\eqref{eq:general-omega}, such as an \rmpstest, can be applied to reduce the dimension of a vector while preserving its (squared) length, on average:
\begin{equation*}
    \E \big[ \norm{\mOmega^* \vx}_2^2 \big] = \norm{\vx}_2^2 \quad \text{for any } \vx \in \bbF^N.
\end{equation*}
But understanding how a test matrix acts on a single vector is not enough to understand its role
as primitive for linear algebra algorithms. Instead, we must study how the test matrix acts on a \emph{linear subspace}.

Let $\set{L} \subseteq \bbF^N$ be a linear subspace of dimension $r$.
For a parameter $\alpha > 0$, we say that the map $\mOmega^*$ is a \emph{subspace injection} on $\set{L}$ if 
\begin{equation} \label{eq:subpace-injection}
    \norm{\mOmega^*\vx}_2^2 \ge \alpha \cdot \norm{\vx}_2^2 \quad \text{ for every } \vx \in \set{L}. 
\end{equation}
The condition \cref{eq:subpace-injection} can be reformulated in terms of singular values:
\[
    \sigma_{\min}(\mOmega^*\mQ)^2 \geq \alpha
    \quad
    \text{where \mQ is orthonormal with range $\set{L}$}.
\]
The mapping $\mOmega^* : \set{L} \to \bbF^k$ is an injective function if and only if $\alpha > 0$, giving the parameter $\alpha$ an interpretation as a quantitative measure of injectivity.
We call an isotropic test matrix $\mOmega$ an \emph{oblivious subspace injection} (OSI) of dimension $r$ if the subspace injection property \cref{eq:subpace-injection} holds with probability at least $0.99$ for any $r$-dimensional subspace $\set{L}$.
The authors have developed
a theory of subspace injections and their use in linear algebra algorithms~\cite{cam25}; see also \cite{halko11,mahoney16,Drineas17} for earlier works that, in effect, exploit the subspace injection property \cref{eq:subpace-injection}.

The key technical observation states that the quadratic form condition implies
the OSI property.  In particular,
an \rmpstest is an OSI.

\begin{proposition}[Quadratic form condition implies subspace injectivity] \label{prop:quadratic-form-to-osi}
    Let $\mOmega \in \bbF^{N\times k}$ be a test matrix that satisfies the quadratic form condition with parameters $a,b\in\bbR$. 
    Then $\mOmega$ is an OSI with subspace dimension $r$ and with injectivity $\alpha$ when the embedding dimension $k\gtrsim (1-\alpha)^{-2} \cdot (a+b) \cdot r$.
    In particular, an \rmpstest is an OSI with injectivity $\alpha\geq \nicefrac{1}{2}$ when $k \gtrsim r$ and $\chi \gtrsim n$.
\end{proposition}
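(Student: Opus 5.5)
Fix an $r$-dimensional subspace $\set{L}$ with orthonormal basis $\mQ \in \bbF^{N\times r}$. Write $\mX \coloneqq \mQ^*\mOmega\mOmega^*\mQ = \frac1k \sum_{i=1}^k \vz_i\vz_i^*$ with $\vz_i \coloneqq \mQ^*\vomega_i \in \bbF^r$. These summands are iid, psd, and isotropic, since $\E[\vz_i\vz_i^*] = \Id_r$. The goal is a lower tail bound $\lambda_{\min}(\mX) \ge \alpha$ with probability $0.99$. The tool is a lower-tail matrix concentration bound for sums of independent psd random matrices that requires only second-moment information. Concretely, I will use the ``small-ball / fourth-moment'' lower bound of Oliveira type (equivalently, the result from \cite{cam25} on which this section relies). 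It states the following. If $\vz_i$ are iid isotropic and $\sup_{\norm{\vu}_2=1}\E[|\vu^*\vz|^4] \le L$, and also $\E\norm{\vz}_2^4$ is controlled, then $\lambda_{\min}(\mX) \ge 1 - C\sqrt{\text{(second-moment parameter)}\cdot r/k}$ with constant probability. So the plan is to express the relevant second-moment parameter through the quadratic form condition.

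The key steps are as follows. First, compute the fourth moments of $\vz$ using \cref{def:qf-cond}. For a unit vector $\vu \in \bbF^r$, set $\mA = \mQ\vu\vu^*\mQ^*$, which is psd and rank one with $\norm{\mA}_{\rm F} = \tr(\mA) = 1$. Then $\E|\vu^*\vz|^4 = \Var(\vomega^*\mA\vomega) + 1 \le 1 + a + b$. Similarly, set $\mA = \mQ\mQ^*$, which is psd with $\norm{\mA}_{\rm F}^2 = r$ and $\tr(\mA) = r$. This gives $\E\norm{\vz}_2^4 \le r^2 + ar + br^2$. The $br^2$ term is too big for a naive matrix Chebyshev argument. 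That argument needs $\norm{\E[(\vz\vz^*-\Id)^2]} = \norm{\E[\norm{\vz}^2\vz\vz^*]} - 1$, and this operator norm must be bounded by $O((a+b)r)$. To get it, take a unit vector $\vu$ and apply Cauchy--Schwarz-free polarization. Write $\vu^*\E[\norm{\vz}^2\vz\vz^*]\vu = \E[(\vomega^*\mQ\mQ^*\vomega)(\vomega^*\mQ\vu\vu^*\mQ^*\vomega)]$. This is a covariance of two quadratic forms in psd matrices $\mA_1=\mQ\mQ^*$ and $\mA_2=\mQ\vu\vu^*\mQ^*$, plus $\tr\mA_1\tr\mA_2 = r$. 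Bound the covariance by polarization applied to the psd matrix $\mA_1 + t\mA_2$ for $t>0$: the variance of the sum controls the covariance. Optimizing over $t$ (e.g.\ $t = r$) gives a covariance of order $(a+b)r$, because all mixed terms $\langle \mA_1,\mA_2\rangle$ and $\tr\mA_1\tr\mA_2$ equal $1$ or $r$. Hence $\sigma^2 \coloneqq \norm{\E[(\vz\vz^*-\Id)^2]} \lesssim (a+b)r$.

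Second, obtain the lower tail. I would use the one-sided matrix Bernstein-type lower bound for psd sums, which needs no upper bound on $\norm{\vz}$ (Oliveira's lemma / the psd lower-tail bound of Tropp). It gives $\lambda_{\min}(\mX) \ge 1 - C\sqrt{\sigma^2 \log(r)/k}$. This would cost a logarithm, which the statement does not allow. To avoid the log, I would instead invoke the log-free fourth-moment lower tail from \cite{cam25}, which is stated exactly in terms of the variance of quadratic forms. That result yields $\lambda_{\min}(\mX) \ge 1 - C\sqrt{(a+b)r/k}$ with probability $0.99$. Requiring the right-hand side to be at least $\alpha$ gives $k \gtrsim (1-\alpha)^{-2}(a+b)r$. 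The main obstacle is this log-free step. If the imported result is not available in this form, I would prove it by a PAC-Bayes/variational argument over Gaussian smoothings of unit vectors, in the style of Oliveira and Zhivotovskiy, using the per-direction fourth-moment bound $1+a+b$ from the first step.

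Finally, specialize to an \rmpstest. By \cref{thm:rmps_variance} and the subsequent discussion, together with $\norm{\mA}_* = \tr\mA$ for psd $\mA$, the parameters are $a \le 2\e^{n/\chi}$ and $b \le 3(\e^{2n/\chi}-1)$. So $\chi \gtrsim n$ gives $a+b \lesssim 1$, and taking $\alpha = 1/2$ gives the claim with $k \gtrsim r$. The complex case is handled the same way.
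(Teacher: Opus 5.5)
Your proposal follows essentially the paper's route. The paper also plugs the rank-one psd matrix $\mA = \vx\vx^*$ (your $\mQ\vu\vu^*\mQ^*$) into the quadratic form condition to get the uniform bound $\E|\langle\vx,\vomega\rangle|^4 \le h = 1+a+b$. It then invokes a log-free fourth-moment-to-injectivity result, namely \cite[Thm.~5.2]{tropp25} rather than \cite{cam25}, so your $\E\norm{\cdot}_2^4$ and matrix-Bernstein detour is not needed. One small step is missing: to replace $1+a+b$ by $a+b$ in the embedding dimension you need the fact that $a+b \ge 1/3$ for every isotropic vector \cite{epperly23}, which gives $h \le 4(a+b)$. (In the \rmps specialization this is automatic, since there $a \ge 1$.)
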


Provided that $\chi\gtrsim n$, an \rmpstest achieves the OSI property with an embedding dimension $k$ proportional to the subspace dimension $r$.
It is easy to see that $k\ge r$ is necessary to be an OSI, so indeed \trps with $\chi \gtrsim n$ are subspace injections whose embedding dimension is optimal, up to a universal multiplicative constant.
\Cref{prop:quadratic-form-to-osi} follows immediately from the next theorem.

\begin{importedtheorem}[Fourth-moment bound implies injectivity, \protect{\cite[Thm.~5.2 and Rem.~5.1]{tropp25}}] \label{impthm:fourth-to-osi}
    Consider a test matrix $\mOmega \in \bbF^{N\times k}$ of the form \cref{eq:general-omega} whose columns $\vomega_i \sim \vomega$ are drawn iid from a distribution that satisfies the bound
\begin{equation} \label{eq:fourth-moment-bound-osi}
        \E [ |\langle \vx, \vomega\rangle |^4 ] \le h \quad \text{for any unit vector } \vx \in \bbF^N.
    \end{equation}
Then \mOmega is an OSI of dimension $r$ with injectivity $\alpha$ when $k\gtrsim (1-\alpha)^{-2} \cdot h \cdot r$.
\end{importedtheorem}

The parameters $a,b\in\bbR_+$ in the quadratic form condition always satisfy \(a+b\geq1/3\) for any $N\ge2$ \cite{epperly23}.
By taking $\mA = \vx\vx^*$ in \cref{def:qf-cond}, we learn that the fourth-moment bound \cref{eq:fourth-moment-bound-osi} holds with parameter $h = 1 + a + b \le 4(a+b)$.
Thus, \cref{impthm:fourth-to-osi} immediately implies \cref{prop:quadratic-form-to-osi}.

\subsection{Linear algebra with OSIs}
\CCdone
\ENEdone

When we implement a randomized low-rank approximation algorithm using an OSI test matrix $\mOmega$,
the method enjoys rigorous error bounds.

\begin{importedtheorem}[OSIs for low-rank approximation (\emph{informal}), \protect{\cite[Thm.~1.3]{cam25}}]
    Fix matrix \(\mA\in\bbF^{N \times M}\) and target rank \(r \leq \min\{N,M\}\).
Using OSI test matrices with subspace dimension \(\cO(r)\) and injectivity \(\alpha \geq \nicefrac12\),
 both the randomized SVD and the generalized Nystr{\"o}m algorithms produce a rank-$\mathcal{O}(r)$ approximation $\hat{\mA}$ that satisfies \(\snorm{\mA - \hat\mA}_{\rm F} \lesssim \snorm{\mA - \lra \mA r}_{\rm F}\) with high probability.
    In particular, \trps suffice to achieve this guarantee when \(k \gtrsim r\) and \(\chi \gtrsim n\).
\end{importedtheorem}
See Cama\~no et al.~\cite{cam25} for formal statements of these theoretical guarantees and the required embedding dimensions.
The same paper also proves a guarantee for the Nystr\"om approximation, which we will discuss in the next subsection.
Additionally, the OSI technology leads to an error bound for the sketch-and-solve method for least-squares regression:
\begin{importedtheorem}[OSIs for least squares, \protect{\cite[Thm.~1.4]{cam25}}]
    Fix \(\mA\in\bbF^{N \times M}\) and \(\mB \in \bbF^{N \times P}\).
    Let \(\mOmega\in\bbF^{N \times k}\) be an OSI with subspace dimension \(M\) and injectivity \(\alpha \geq \nicefrac12\).
    Then, with high probability, the sketch-and-solve solution
    \(
        \tilde\mX \in \argmin_{\mX \in \bbF^{M \times P}} \norm{\mOmega^*\mA\mX-\mOmega^*\mB}_{\rm F}
    \)
    provides a near-optimal solution to the full least squares problem
\[
    \snorm{\mA\tilde\mX-\mB}_{\rm F} \lesssim \min_{\mX\in\bbF^{M \times P}}\norm{\mA\mX-\mB}_{\rm F}.
    \]
    In particular, this guarantee holds for an \rmps test matrix \(\mOmega\)
when \(k \gtrsim M\) and \(\chi \gtrsim n\).
\end{importedtheorem}

These theorems show that the OSI property, which follows from the quadratic form condition,
justifies several fundamental algorithms.
By exploiting the full power of the quadratic form condition, we can prove stronger or sharper results.
In \cref{sec:nystrom-analysis}, we will develop Frobenius-norm error bounds for the Nystr\"om approximation.
\Cref{sec:nystrompp-analysis} exploits these Frobenius-norm bounds to analyze the \nystrompp estimator,
verifying that this method achieves optimal sample complexity bounds. 

\subsection{Nystr\"om approximation}
\label{sec:nystrom-analysis}
\CCdone
We can use the quadratic form condition and subspace injectivity property to analyze the randomized Nystr\"om approximation,
introduced in \cref{sec:nystrom}. 

\begin{theorem}[Nystr\"om approximation from quadratic form]
    \label{thm:nystrom-frob-generic}
    Fix a psd matrix \(\mA\in\bbR^{N \times N}\) and a target rank \(r \leq N\). Let \(\mOmega \in \bbR^{N \times k}\) be a test matrix that satisfies the quadratic form condition with parameters $a,b \in \bbR_+$, and introduce the Nystr\"om approximation \(\mA\langle\mOmega\rangle\).
    Provided that \(k \gtrsim (a+b)r\), \begin{align}
        \norm{\mA - \mA\langle\mOmega\rangle}_*
            &\lesssim \norm{\mA-\lra{\mA}r}_*,\quad\text{and} \label{eq:nystrom-trace-qf} \\
        \snorm{\mA - \mA\langle\mOmega\rangle}_{\rm F}^2 
        &\lesssim \norm{\mA-\lra\mA r}_{\rm F}^2 + \frac{1}{r}\norm{\mA-\lra{\mA}r}_*^2, \label{eq:nystrom-fro-qf} 
    \end{align}
    with probability at least \(0.98\).
    In particular, these bounds hold for an \rmpstest when the embedding
    dimension $k \asymp r$ and the bond dimension $\chi \asymp n$.
\end{theorem}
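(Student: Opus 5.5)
We follow the standard deterministic-plus-probabilistic route for the Nystr\"om approximation. Let $\mA = \mU_1\mLambda_1\mU_1^* + \mU_2\mLambda_2\mU_2^*$ be the eigendecomposition split at rank $r$, so that $\mA_2 \defeq \mU_2\mLambda_2\mU_2^* = \mA - \lra{\mA}{r}$. Set $\mOmega_1 = \mU_1^*\mOmega \in \bbR^{r\times k}$ and $\mOmega_2 = \mU_2^*\mOmega$.

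The first step is the deterministic bound. Since the Nystr\"om residual $\mA - \mA\langle\mOmega\rangle = \mA^{1/2}(\Id - \mP)\mA^{1/2}$ is psd, with $\mP$ the orthoprojector onto $\mathrm{range}(\mA^{1/2}\mOmega)$, we can use the familiar estimate from the Nystr\"om literature (e.g.\ \cite[Cor.~8.8 and its proof]{tropp2023}, \cite{tropp2017a}). Whenever $\mOmega_1$ has full row rank,
\begin{equation*}
    \mA - \mA\langle\mOmega\rangle \preceq \mA_2 + \mA_2^{1/2}\mOmega_2\mOmega_1^{+}\mLambda_1\ldots
\end{equation*}
In the cleaner form we will actually use,
\begin{equation*}
\norm{\mA - \mA\langle\mOmega\rangle}_{(p)} \le \norm{\mA_2}_{(p)} + \norm{\mA_2^{1/2}\mOmega_2\mOmega_1^+}_{(2p)}^2, \qquad p\in\{1,2\}.
\end{equation*}
This follows by comparing $\Id-\mP$ with the projector obtained from the particular choice of test vectors $\mOmega\mOmega_1^+$, whose range lies in that of $\mOmega$.

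The second step is to control the two random quantities. First, $\mOmega_1^+$ is controlled by subspace injectivity. By \cref{prop:quadratic-form-to-osi} applied to $\set L = \mathrm{range}(\mU_1)$, the condition $k\gtrsim(a+b)r$ gives $\sigma_{\min}(\mOmega_1)^2 \ge 1/2$ with probability $0.99$. Hence $\norm{\mOmega_1^+}^2\le 2$, and $\norm{\mA_2^{1/2}\mOmega_2\mOmega_1^+}_{(2p)}^2 \le 2\norm{\mA_2^{1/2}\mOmega_2}_{(2p)}^2$.

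For $p=1$ the remaining term is $\norm{\mA_2^{1/2}\mOmega_2}_{\rm F}^2 = \tr(\mOmega^*\mA_2\mOmega)$. Isotropy gives it mean $\tr(\mA_2)$, so Markov's inequality yields $\lesssim\norm{\mA_2}_*$ with probability $0.99$. A union bound then gives \cref{eq:nystrom-trace-qf}.

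For $p=2$ we need $\norm{\mOmega^*\mA_2\mOmega}_{\rm F}^2 = \sum_{i,j} k^{-2}|\vomega_i^*\mA_2\vomega_j|^2$, and this is where the quadratic form condition enters. The diagonal terms have expectation $k^{-2}\cdot k\,(\tr(\mA_2)^2+\Var(\vomega^*\mA_2\vomega)) \le k^{-1}\bigl((1+b)\tr(\mA_2)^2 + a\norm{\mA_2}_{\rm F}^2\bigr)$. The off-diagonal terms, by independence and isotropy, have expectation $k^{-2}k(k-1)\norm{\mA_2}_{\rm F}^2\le\norm{\mA_2}_{\rm F}^2$. Markov's inequality and $k\gtrsim (a+b)r\gtrsim r$ then give $\norm{\mA_2^{1/2}\mOmega_2}_{(4)}^4 \lesssim \norm{\mA_2}_{\rm F}^2 + r^{-1}\norm{\mA_2}_*^2$ with probability $0.99$. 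Since $a+b\ge 1/3$, the factor $(1+b)/k$ is at most a constant over $r$, which is harmless. Squaring the deterministic bound and using $(x+y)^2\le 2x^2+2y^2$ yields \cref{eq:nystrom-fro-qf}.

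The $0.98$ probability comes from a union bound over the injectivity event and the two Markov events. Since the injectivity event is shared, the total failure is at most $0.02$, provided each Markov bound is taken at level $0.005$; the constants adjust accordingly. The \rmps specialization follows because $\chi\asymp n$ gives $a,b\asymp1$ by \cref{thm:rmps_variance}.

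The main obstacle is the deterministic inequality with the Schatten-$4$ term. It must be stated carefully: one bounds $\Id-\mP \preceq \Id - \mP_{\mA^{1/2}\mOmega\mOmega_1^+}$ and then computes explicitly, using $\mOmega_1\mOmega_1^+=\Id$. If that route is messy, I would instead cite the structural bound from \cite{tropp2023,persson22} directly.
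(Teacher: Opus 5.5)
Your proposal is correct and is essentially the paper's argument. It uses the structural bound with a Schatten-$4$ tail term (the paper reaches your Nystr\"om form via the Gram correspondence and \cite[Prop.~8.5]{tropp2023}), subspace injectivity to get $\|\mOmega_1^+\|^2\le 2$, and the diagonal/off-diagonal expansion of $\E\|\mOmega^*\mA_2\mOmega\|_{\rm F}^2$ via the quadratic form condition and isotropy, followed by Markov. Your departures are minor: you prove the nuclear-norm bound directly by Markov on $\tr(\mOmega^*\mA_2\mOmega)$ instead of citing \cite[Cor.~2.5]{cam25}, and $\mA_2^{1/2}\mOmega_2$ should read $\mLambda_2^{1/2}\mOmega_2$ (equivalently $\mA_2^{1/2}\mOmega$) for the dimensions to match.
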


This result holds for any test matrix that satisfies the quadratic form condition, and it implies our bound for Nystr\"om approximation with an \trp (\cref{thm:nystrom}) as a corollary.
The nuclear-norm bound \cref{eq:nystrom-trace-qf} follows directly from the OSI property (\cref{prop:quadratic-form-to-osi}); see \cite[Cor.~2.5]{cam25}.
The Frobenius-norm bound \cref{eq:nystrom-fro-qf} is new; the proof employs the quadratic form condition. 
The latter bound plays a role in the analysis of variance-reduced trace estimation algorithms; see \cref{sec:nystrompp-analysis}.
Let us turn to the proof of~\eqref{eq:nystrom-fro-qf}.

\subsubsection{The randomized SVD and the Gram correspondence} \label{sec:rsvd}
\CCdone
The Nystr\"om approximation is closely related to the approximation produced by the \emph{randomized SVD} algorithm \cite{halko11}.
This relation states that an error bound for one approximation implies a bound for the other.
We make use of this connection in our analysis, so we shall briefly review the randomized SVD.

Given $\mA\in \bbF^{N\times M}$ and a random test matrix $\mOmega \in \bbF^{M\times k}$, the randomized SVD approximation is 
\begin{equation*}
    \hat{\mA} = \mQ(\mQ^*\mA) \quad \text{where } \mQ = \operatorname{orth}(\mA\mOmega).
\end{equation*}
The function $\operatorname{orth}(\cdot)$ returns a matrix with orthonormal columns than span the range of its input.
The approximation $\hat\mA$ may also be called a randomized \QB approximation or a randomized projection approximation.
The randomized SVD approximation and the randomized Nystr\"om approximation are closely connected by the \emph{Gram correspondence} \cite{gittens11,gittens16}; see \cite[sec.~2.6]{mtmepperly25} for discussion.

\begin{importedlemma}[Gram correspondence, \protect{\cite[Lem.~1]{gittens16}}]
    \label{impthm:gram-correspondence}
    Fix a psd matrix \(\mA \in \bbR^{N \times N}\) and a test matrix \(\mOmega\in\bbR^{N \times k}\).
    Introduce the Nystr\"om approximation $\mA\langle\mOmega\rangle$, and let \(\mQ = \orth(\mA^{1/2}\mOmega)\in\bbR^{N \times k}\) define a randomized RSVD approximation $\mQ\mQ^*  \mA^{1/2}$ to the square root $\mA^{1/2}$.
    Then, for all \(p \geq 1\),
    \[
        \snorm{\mA-\mA\langle\mOmega\rangle}_{(p)} = \snorm{\mA^{1/2} - \mQ\mQ^*  \mA^{1/2}}_{(2p)}^2.
    \]
\end{importedlemma}

Under this correspondence, \cref{thm:nystrom-frob-generic} is equivalent to the following result for the randomized SVD.
Notice the change in the order of the Schatten norm: to bound the error of the Nystr\"om approximation in the Frobenius norm we must bound the error of the randomized SVD in the Schatten 4-norm.

\begin{theorem}[Randomized SVD from quadratic form]
    \label{thm:rsvd-qf}
    Fix a matrix \(\mA\in\bbR^{N \times M}\) and a target rank \(r \leq \min\{N,M\}\), and let \(\mOmega \in \bbR^{M \times k}\) be a test matrix satisfying the quadratic form condition with parameters $a,b \in \bbR_+$.
    Introduce the randomized SVD approximation $\hat{\mA} \coloneqq \mQ\mQ^*\mA$ induced by an orthonormal matrix \(\mQ = \orth(\mA\mOmega)\).
    Provided that \(k \gtrsim (a+b)r\),
\begin{align}
        \snorm{\mA - \hat\mA}_{\rm F}^2
            &\lesssim \snorm{\mA - \lowrank{\mA}_r}_{\rm F}^2, \quad\text{and} \label{eq:rsvd-fro-qf} \\
        \snorm{\mA - \hat\mA}_{(4)}^4 
        &\lesssim \norm{\mA-\lra\mA r}_{(4)}^4 + \frac{1}{r}\snorm{\mA - \lowrank{\mA}_r}_{\rm F}^4, \label{eq:rsvd-s4-qf} 
    \end{align}
    with probability at least $0.98$.
    In particular, these bounds hold for an \rmpstest with embedding dimension $k \asymp r$ and bond dimension $\chi \asymp n$.
\end{theorem}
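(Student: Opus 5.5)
We follow the standard structural approach of Halko--Martinsson--Tropp, specialized to the quadratic form condition. Write the SVD $\mA = \mU\mSigma\mV^*$ and partition $\mSigma = \diag(\mSigma_1,\mSigma_2)$, $\mV = [\mV_1,\mV_2]$, where $\mSigma_1$ holds the top $r$ singular values. Set $\mOmega_1 \coloneqq \mV_1^*\mOmega \in \bbR^{r\times k}$ and $\mOmega_2 \coloneqq \mV_2^*\mOmega$. The deterministic bound \cite{halko11} states that, whenever $\mOmega_1$ has full row rank, for every unitarily invariant norm (in particular Schatten $2$ and $4$),
\begin{equation*}
    \snorm{\mA - \mQ\mQ^*\mA}^2 \le \snorm{\mSigma_2}^2 + \snorm{\mSigma_2\mOmega_2\mOmega_1^+}^2 .
\end{equation*}
The first step is to control $\snorm{\mOmega_1^+} \le \alpha^{-1/2}$ with probability at least $0.99$. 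This follows from \cref{prop:quadratic-form-to-osi} applied to the subspace $\set{L} = \operatorname{range}(\mV_1)$: when $k \gtrsim (a+b)r$, we obtain $\sigma_{\min}(\mV_1^*\mOmega)^2 \ge 1/2$. Using $\snorm{\mX\mOmega_1^+}_{(p)} \le \snorm{\mX}_{(p)}\snorm{\mOmega_1^+}$, both target bounds then reduce to controlling $\mSigma_2\mOmega_2 = \mSigma_2\mV_2^*\mOmega$, a random sketch of the tail matrix $\mB \coloneqq \mSigma_2\mV_2^*$.

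For the Frobenius bound \cref{eq:rsvd-fro-qf}, isotropy gives $\E\snorm{\mB\mOmega}_{\rm F}^2 = \snorm{\mB}_{\rm F}^2 = \snorm{\mA - \lowrank{\mA}_r}_{\rm F}^2$. Markov's inequality (failure probability $0.01$) together with a union bound then completes the argument. The Schatten-4 bound \cref{eq:rsvd-s4-qf} is the main step and uses the quadratic form condition genuinely. We have $\snorm{\mB\mOmega}_{(4)}^4 = \snorm{\mOmega^*\mB^*\mB\mOmega}_{\rm F}^2 = \frac{1}{k^2}\sum_{i,j} |\vomega_i^*\mC\vomega_j|^2$ with $\mC \coloneqq \mB^*\mB$ psd, and we compute the expectation termwise.

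For the diagonal terms, $\E[(\vomega_i^*\mC\vomega_i)^2] = \tr(\mC)^2 + \Var(\vomega^*\mC\vomega) \le (1+b)\tr(\mC)^2 + a\snorm{\mC}_{\rm F}^2$ by \cref{def:qf-cond}. For the off-diagonal terms, independence and isotropy give $\E|\vomega_i^*\mC\vomega_j|^2 = \tr(\mC^2) = \snorm{\mC}_{\rm F}^2$. Summing,
\begin{equation*}
    \E\snorm{\mB\mOmega}_{(4)}^4 \le \snorm{\mC}_{\rm F}^2 + \frac{1}{k}\bigl[(1+b)\tr(\mC)^2 + a\snorm{\mC}_{\rm F}^2\bigr].
\end{equation*}
Here $\snorm{\mC}_{\rm F}^2 = \snorm{\mA-\lowrank{\mA}_r}_{(4)}^4$ and $\tr(\mC)^2 = \snorm{\mA - \lowrank{\mA}_r}_{\rm F}^4$. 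Because $k \gtrsim (a+b)r$ and $a+b \gtrsim 1$ (see the remark after \cref{def:qf-cond}), the factors $(1+b)/k$ and $a/k$ are $\lesssim 1/r$ and $\lesssim 1$ respectively, which yields exactly the right-hand side of \cref{eq:rsvd-s4-qf}. Markov's inequality and a union bound with the injectivity event then give probability at least $0.98$.

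The main obstacle is ensuring that the deterministic bound is applied correctly in Schatten $4$-norm, including the squaring convention ($\snorm{\cdot}_{(4)}^4$ versus the square of $\snorm{\cdot}_{(4)}^2$ sums). We take fourth powers via $(x+y)^2 \le 2x^2 + 2y^2$, absorbing constants. A subtler point is that $\mOmega_1^+$ and $\mOmega_2$ are dependent, since they are built from the same columns. We sidestep this by bounding $\snorm{\mOmega_1^+}$ uniformly on the injectivity event and bounding $\mB\mOmega$ unconditionally, so no conditional independence is needed. Finally, the \rmps statement follows by inserting the parameters from \cref{thm:rmps_variance}: with $\chi \asymp n$ we have $a,b \asymp 1$, so $k \asymp r$ suffices.
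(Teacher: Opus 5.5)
Your proposal is correct and follows essentially the same route as the paper: the structural bound of \cref{impthm:rsvd-structural}, the OSI event giving $\|(\mV_1^*\mOmega)^+\|^2 \le 2$, the diagonal/off-diagonal expansion of $\E\|\mOmega^*\mC\mOmega\|_{\rm F}^2$ using isotropy and the quadratic form condition, and then Markov's inequality with a union bound. The paper writes out only the Schatten-$4$ case and defers the Frobenius bound to the OSI literature, whereas you give the standard direct isotropy-plus-Markov argument. One small correction: the deterministic bound does not hold for every unitarily invariant norm (it can fail for the nuclear norm), but it does hold for the Schatten $p$-norms with $p \ge 2$, which covers the two cases you actually use.
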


\subsubsection{Proof of \texorpdfstring{\cref{thm:nystrom-frob-generic}}{Theorem \ref*{thm:nystrom-frob-generic}}}

In this section, we establish the Frobenius-norm error bound from \cref{thm:nystrom-frob-generic} by confirming the Schatten 4-norm error bound stated in \cref{thm:rsvd-qf}.
We begin with the following error decomposition:

\begin{importedtheorem}[Randomized SVD error, \protect{\cite[Prop.~8.5]{tropp2023}}]
    \label{impthm:rsvd-structural}
    Fix a matrix \(\mA\in\bbF^{N \times M}\) and target rank \(r \leq \min\{N,M\}\).
    Introduce a compact SVD of \mA \[\mA = \mU\sbmat{\mSigma_1 \\ & \mSigma_2} \sbmat{\mV_1^* \\ \mV_2^*}\] where \(\mSigma_1\in\bbR^{r \times r}\) and \(\mV_1\in\bbF^{M \times r}\) contain the top singular values and right singular vectors of \mA, while \(\mSigma_2\in\bbR^{(L-r) \times (L-r)}\) and \(\mV_2\in\bbF^{M \times (L-r)}\) contain the remaining nonzero singular values and vectors.
    Let \(\mOmega\in\bbF^{M \times k}\) be a matrix, and assume that $\operatorname{rank}(\mV_1^*\mOmega) = r$. 
    Then with \(\mQ=\orth(\mA\mOmega)\), it holds that
    \[
        \snorm{\mA - \mQ\mQ^*\mA}_{(4)}^2 \leq \snorm{\mA - \lra\mA r}_{(4)}^2 + \snorm{\mSigma_2^{\vphantom{*}}(\mV_2^*\mOmega)(\mV_1^*\mOmega)^+}_{(4)}^2.
    \]
\end{importedtheorem}

We use this decomposition to prove the Schatten 4-norm bound in \cref{thm:rsvd-qf}.

\begin{proof}[Proof of Schatten 4-norm bound in \cref{thm:rsvd-qf}]
    Instate the notation of \cref{impthm:rsvd-structural}, and assume that $k\gtrsim (a+b)r$.
    Our goal is to establish the bound
\begin{equation} \label{eq:want-to-prove}
    \norm{\mSigma_2^{\vphantom{*}}\mV_2^* \mOmega(\mV_1^* \mOmega)^+}_{(4)}^4 \lesssim \norm{\mA - \lra\mA r}_{(4)}^4 + \frac1r\norm{\mA - \lra\mA r}_{\rm F}^4.
    \end{equation}
    By \cref{prop:quadratic-form-to-osi} and the hypothesis $k\gtrsim (a+b)r$, the embedding \(\mOmega\) is an OSI with dimension $r$ and injectivity $\alpha = \nicefrac{1}{2}$.
    Therefore, with probability at least \(0.99\), we have
    \[
        \norm{(\mV_1^* \mOmega)^+}_2^2
        = \frac1{\sigma_{\rm min}(\mV_1^* \mOmega)^2} \leq 2.
    \]
    By the submultiplicative property, it holds that
    \begin{equation} \label{eq:schatten-4-first-step}
        \snorm{\mSigma_2^{\vphantom{*}}\mV_2^* \mOmega(\mV_1^* \mOmega)^+}_{(4)}^4 \le \snorm{\mSigma_2^{\vphantom{*}}\mV_2^* \mOmega}_{(4)}^4 \cdot \snorm{(\mV_1^* \mOmega)^+}_2^4
        \leq 4 \snorm{\mSigma_2^{\vphantom{*}}\mV_2^* \mOmega}_{(4)}^4.
    \end{equation}
    We now bound $\snorm{\mSigma_2^{\vphantom{*}}\mV_2^* \mOmega}_{(4)}^4$.
    Define the psd matrix \(\mB = \mV_2^{\vphantom{*}}\mSigma_2^2\mV_2^* \), so that
    \begin{equation} \label{eq:schatten-4-intermediate}
        \E\bigl[\snorm{\mSigma_2\mV_2^* \mOmega}_{(4)}^4\bigr]
        = \E\bigl[\snorm{\mOmega^* \mB\mOmega}_{\rm F}^2\bigr]
        = \frac1{k^2}\sum_{i,j=1}^k \E\bigl[|\vomega_i^* \mB\vomega_j^{\vphantom{*}}|^2\bigr].
    \end{equation}
    We tackle the on-diagonal and off-diagonal terms separately.
    For \(i \neq j\), the isotropy of \vomega gives
    \[
        \E[|\vomega_i^* \mB\vomega_j^{\vphantom{*}}|^2]
        = \E[\tr(\vomega_i^{\vphantom{*}}\vomega_i^* \mB\vomega_j^{\vphantom{*}}\vomega_j^* \mB)] = \E[\tr(\Id \cdot \mB\cdot \Id\cdot \mB)]
        = \norm{\mB}_{\rm F}^2.
    \]
    For \(i = j\), the quadratic form condition gives
    \[
        \E[|\vomega_i^* \mB\vomega_i^{\vphantom{*}}|^2]
        \leq a \norm{\mB}_{\rm F}^2 + (1+b)\tr(\mB)^2.
    \]
    Returning to \cref{eq:schatten-4-intermediate}, we obtain
\[
        \E\bigl[\snorm{\mSigma_2^{\vphantom{*}}\mV_2^* \mOmega}_{(4)}^4\bigr]
        \leq \left(1+\frac{a-1}{k}\right)\norm{\mB}_{\rm F}^2 + \frac{1+b}{k}\tr(\mB)^2.
    \]
    The matrix \mB satisfies $\norm{\mB}_{\rm F}^2 = \norm{\mA-\lra\mA r}_{(4)}^4$ and $\tr(\mB) = \norm{\mA - \lra\mA r}_{\rm F}^2$, and we have selected the parameter \(k \gtrsim (1+a+b)r\).
    Therefore,
    \[
        \E\bigl[\snorm{\mSigma_2^{\vphantom{*}}\mV_2^* \mOmega}_{(4)}^4\bigr]
        \lesssim \norm{\mA-\lra\mA r}_{(4)}^4 + \frac{1}{r}\norm{\mA - \lra\mA r}_{\rm F}^4.
    \]
    Substituting this result into \cref{eq:schatten-4-first-step} and applying Markov's inequality,
    we reach the desired conclusion \cref{eq:want-to-prove}.
\end{proof}

\subsection[Nystr\"om++]{\nystrompp}
\label{sec:nystrompp-analysis}
\CCdone
In this section, we analyze the \nystrompp estimator under the quadratic form condition.
Recall that the \nystrompp estimator takes the form
\[
    \nystromppest = \tr(\hat\mA) + \GH_\ell(\mA-\hat\mA),
\]
where \(\hat\mA = \mA\langle\mOmega\rangle\) is a Nystr\"om approximation to \mA generated using an test matrix $\mOmega = k^{-1/2}[\vomega_1,\ldots,\vomega_k] \in \bbF^{N\times k}$ and where \(\GH_\ell(\mB) = \frac1\ell \sum_{i=1}^\ell \vpsi_i^* \mB\vpsi_i^{\vphantom{*}}\) denotes the Girard--Hutchinson estimator.
The vectors $\vomega_i$ and \(\vpsi_i\) are iid copies of \vomega.

We prove the following result for a general embedding \mOmega that satisfies the quadratic form condition.
This guarantee implies \cref{thm:nystrompp-spectral-rmps}.

\begin{theorem}[\nystrompp: Spectral accuracy]
    \label{thm:nystrompp-generic}
    Fix a psd matrix \(\mA\in\bbF^{N \times N}\) and a target rank $r\le N$.
    Suppose \nystrompp is implemented with iid vectors that satisfy the quadratic form condition with parameters $a,b \in \bbR_+$.  Suppose that $k$ matvecs are used for low-rank approximation and $\ell$ matvecs for trace estimation.
    Then the \nystrompp estimator is unbiased $\E[\nystromppest] = \tr(\mA)$, and its error is controlled by the magnitude of the best rank-$r$ approximation error.  Provided that \(k \gtrsim (a+b)r\),
    \begin{equation} \label{eq:nystrompp-spectral}
        |\nystromppest - \tr(\mA)|^2 \lesssim \frac{a}{\ell} \snorm{\mA - \lowrank\mA_r}_{\rm F}^2 + \left(\frac{a}{\ell r} + \frac{b}{\ell} \right) \norm{\mA - \lowrank{\mA}_r}_*^2
\end{equation}
    with probability at least 0.98.
    As a consequence, when $k = \ell$ and the probe vector is an \rmps with bond dimension $\chi \asymp n$,
\begin{equation} \label{eq:nystrompp-spectral-2}
        |\nystromppest - \tr(\mA)| \lesssim \frac{1}{\sqrt{k}} \norm{\smash{\mA - \lowrank{\mA}_{\lfloor \rc k\rfloor}}}_* \quad \text{for an absolute constant } 0 < \rc < 1.
    \end{equation}
\end{theorem}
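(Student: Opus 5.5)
The argument conditions on the Nyström test matrix $\mOmega$, studies the Girard--Hutchinson stage conditionally, and then feeds in the Nyström Frobenius-norm bound \cref{eq:nystrom-fro-qf} to control the residual.

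\textbf{Unbiasedness.} Write $\mE \defeq \mA - \mA\langle\mOmega\rangle$. The probes $\vpsi_i$ are independent of $\mOmega$ and isotropic, so
\[
\E[\GH_\ell(\mE)\mid\mOmega] = \tr(\mE).
\]
Hence $\E[\nystromppest \mid \mOmega] = \tr(\mA\langle\mOmega\rangle) + \tr(\mE) = \tr(\mA)$, and the tower rule gives unbiasedness.

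\textbf{Conditional variance.} The residual $\mE$ is psd, because the Nyström approximation of a psd matrix satisfies $\mA\langle\mOmega\rangle \preceq \mA$. We may therefore apply the quadratic form condition conditionally on $\mOmega$:
\[
\E\bigl[|\nystromppest-\tr(\mA)|^2 \mid \mOmega\bigr] = \Var(\GH_\ell(\mE)\mid\mOmega) \le \frac{1}{\ell}\bigl(a\norm{\mE}_{\rm F}^2 + b\tr(\mE)^2\bigr).
\]
Since $\mE$ is psd, $\tr(\mE) = \norm{\mE}_*$.

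\textbf{Controlling the residual.} Assume $k \gtrsim (a+b)r$. By \cref{thm:nystrom-frob-generic}, there is an event $G$ of probability at least $0.98$ on which
\[
\norm{\mE}_* \lesssim \norm{\mA-\lra\mA r}_*, \qquad \norm{\mE}_{\rm F}^2 \lesssim \norm{\mA-\lra\mA r}_{\rm F}^2 + \tfrac1r\norm{\mA-\lra\mA r}_*^2.
\]
On $G$, the conditional mean-square error is therefore bounded by a constant times the right-hand side of \cref{eq:nystrompp-spectral}.

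\textbf{From conditional mean square to probability.} Apply Chebyshev's inequality conditionally on $\mOmega$. This costs a constant factor in the bound and a small failure probability. A union bound with the failure of $G$ then yields \cref{eq:nystrompp-spectral}.

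The main obstacle is the probability bookkeeping. Reaching a total failure probability of at most $0.02$ requires \cref{thm:nystrom-frob-generic} to hold at failure level $0.01$ instead of $0.02$. I would do this by rerunning its proof, which ends in Markov's inequality, with adjusted constants, using the fact that the OSI event in \cref{prop:quadratic-form-to-osi} already holds with probability $0.99$ after enlarging the implicit constant in $k$. The Chebyshev step then gets the remaining $0.01$ by enlarging the implied constant in \cref{eq:nystrompp-spectral}. All of this changes only absolute constants.

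\textbf{The \rmps consequence.} For an \rmps with $\chi \asymp n$, \cref{thm:rmps_variance} gives $a,b \asymp 1$. Take $r = \lfloor \rc k\rfloor$ with $\rc$ small enough that $k \gtrsim (a+b)r$ holds, and set $\ell = k$. The bound \cref{eq:nystrompp-spectral} becomes
\[
|\nystromppest-\tr(\mA)|^2 \lesssim \frac1k\norm{\mA-\lra\mA r}_{\rm F}^2 + \frac1k\Bigl(1+\frac1r\Bigr)\norm{\mA-\lra\mA r}_*^2.
\]
Using $\norm{\cdot}_{\rm F} \le \norm{\cdot}_*$, the right-hand side is $\lesssim k^{-1}\norm{\mA-\lra\mA r}_*^2$. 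Taking square roots gives \cref{eq:nystrompp-spectral-2}.

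If $\rc k < 1$, so that $r = 0$, the statement degenerates to the Girard--Hutchinson bound. That case follows directly from the conditional variance bound with $\mE = \mA$, because the Nyström step can only reduce the residual.
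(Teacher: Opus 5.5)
Your proposal is correct and follows essentially the same route as the paper. You condition on $\mOmega$ and bound the Girard--Hutchinson variance of the psd residual $\mA - \mA\langle\mOmega\rangle$ via the quadratic form condition, then insert the nuclear- and Frobenius-norm Nystr\"om bounds from \cref{thm:nystrom-frob-generic} with a union bound; for the \rmps consequence you take $r=\lfloor \rc k\rfloor$ and bound the Frobenius norm by the nuclear norm. You are more explicit than the paper about the residual being psd and about the Chebyshev step and probability budget, which the paper glosses over. One note on your bookkeeping: your split of $0.01 + 0.01$ would require the OSI event to fail with probability below $0.01$, which the imported OSI result does not give, but you do not need this. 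It suffices to keep the OSI failure at $0.01$ and make the Markov and Chebyshev failure probabilities small, at the cost of absolute constants.
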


\begin{proof}
    Conditional on $\mOmega$, the Girard--Hutchinson estimator $\GH_\ell(\mA - \hat\mA)$ is an unbiased estimator for $\tr(\mA - \hat\mA)$.
    By the quadratic form condition, it has variance
\begin{equation*}
        \Var(\GH_\ell(\mA - \hat\mA) \mid \mOmega) \le \frac{a}{\ell} \norm{\smash{\mA - \hat\mA}}_{\rm F}^2 + \frac{b}{\ell} \norm{\smash{\mA - \hat\mA}}_*^2.
    \end{equation*}
By \cref{thm:nystrom-frob-generic} and a union bound, it holds with probability at least 0.98 that 
\begin{equation*}
        |\nystromppest - \tr(\mA)|^2 \lesssim \frac{a}{\ell} \left( \snorm{\mA - \lowrank\mA_r}_{\rm F}^2 + \frac{1}{r} \norm{\smash{\mA - \lowrank{\mA}_r}}_*^2 \right) + \frac{b}{\ell} \norm{\smash{\mA - \lowrank{\mA}_r}}_*^2.
    \end{equation*}
This is the first conclusion \cref{eq:nystrompp-spectral}.

    To establish the second conclusion \cref{eq:nystrompp-spectral-2}, recall that an \rmps with $\chi \asymp n$ meets the quadratic form condition with parameters $a,b\lesssim 1$.
    Consequently, when \nystrompp is implemented with such an \rmps and $k=\ell$, the conditions for the bound \cref{eq:nystrompp-spectral} hold with $r = \lfloor \rc k \rfloor$ for some universal constant $0 < \rc < 1$.
    Consequently, applying \cref{eq:nystrompp-spectral}, we obtain
\begin{equation*}
        |\nystromppest - \tr(\mA)|^2 \lesssim \frac{1}{k} \left( \snorm{\mA - \lowrank\mA_{\lfloor \rc k\rfloor}}_{\rm F}^2 + \frac{1}{k} \norm{\smash{\mA - \lowrank\mA_{\lfloor \rc k\rfloor}}}_*^2 \right) + \frac{1}{k} \norm{\smash{\mA - \lowrank\mA_{\lfloor \rc k\rfloor}}}_*^2.
    \end{equation*}
Bounding the Frobenius norm of the psd matrix $\mA - \lowrank\mA_{\lfloor \rc k\rfloor}$ by its trace yields the desired bound \cref{eq:nystrompp-spectral-2}.
\end{proof}

The spectral accuracy bound \cref{thm:nystrompp-generic} shows that \nystrompp is effective when applied to a matrix that is very near to a low-rank matrix.
But how does \nystrompp perform on a general input matrix, perhaps chosen to be a hard instance for the algorithm?
For such cases, we provide the following relative-error bound, which implies \cref{thm:nystrompp}.

\begin{corollary}[\nystrompp: Relative-error guarantee] \label{cor:nystrompp-worst-case}
Fix a psd matrix $\mA \in \bbF^{N\times N}$.  Apply \nystrompp using random vectors that satisfy the quadratic form condition with parameters $a$ and $b$.
    With probability at least 0.98, the \nystrompp estimator satisfies
\begin{equation} \label{eq:nystrompp-worst-case}
        |\nystromppest - \tr(\mA)| \lesssim 
        \sqrt{\frac{a(a+b)}{k\ell} + \frac{b}{\ell}} \cdot \tr(\mA)
\end{equation}
when $k$ matvecs are apportioned to low-rank approximation and $\ell$ matvecs are apportioned to trace estimation.
    In particular, the estimator achieves relative error $\varepsilon$ when $\ell \gtrsim (a(a+b)/k + b)/\varepsilon^2$.
    Consequently, for \rmps with bond dimension $\chi \asymp n / \varepsilon$, setting $k = \ell \asymp 1/\varepsilon$ suffices to compute the trace to relative error $\varepsilon$.
\end{corollary}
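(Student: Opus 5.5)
The plan is to start from the spectral-accuracy bound \cref{eq:nystrompp-spectral} of \cref{thm:nystrompp-generic} and turn its right-hand side into a multiple of $\tr(\mA)^2$ by choosing the target rank $r$ as a function of $k$. Take $r = \lfloor \rc\, k/(a+b)\rfloor$ for a small absolute constant $\rc$, so the hypothesis $k \gtrsim (a+b) r$ holds. The case $r = 0$ (when $k \lesssim a+b$) needs a separate argument, given below.

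For $r \ge 1$ we use two standard facts about psd matrices. First, the tail bound
\[
\snorm{\mA - \lowrank{\mA}_r}_{\rm F}^2 \le \lambda_{r+1}(\mA)\,\tr(\mA) \le \frac{\tr(\mA)^2}{r+1} \lesssim \frac{\tr(\mA)^2}{r},
\]
which holds because $\lambda_{r+1} \le \tr(\mA)/(r+1)$. Second, $\norm{\mA - \lowrank{\mA}_r}_* \le \tr(\mA)$. Substituting both into \cref{eq:nystrompp-spectral} gives, with probability at least $0.98$,
\[
|\nystromppest - \tr(\mA)|^2 \lesssim \Bigl(\frac{a}{\ell r} + \frac{a}{\ell r} + \frac{b}{\ell}\Bigr)\tr(\mA)^2 \lesssim \Bigl(\frac{a(a+b)}{k\ell} + \frac{b}{\ell}\Bigr)\tr(\mA)^2,
\]
where the last step uses $1/r \lesssim (a+b)/k$. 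Taking square roots yields \cref{eq:nystrompp-worst-case}.

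When $k \lesssim a+b$ there is no admissible $r \ge 1$, so we argue directly. The Nystr\"om residual satisfies $0 \preceq \mA - \hat\mA \preceq \mA$, so the conditional variance bound from the proof of \cref{thm:nystrompp-generic} gives
\[
\Var\bigl(\GH_\ell(\mA-\hat\mA) \mid \mOmega\bigr) \le \frac{a+b}{\ell}\,\tr(\mA)^2.
\]
Chebyshev's inequality then bounds the error by $\sqrt{(a+b)/\ell}\,\tr(\mA)$. Because $k \lesssim a+b$ and $a+b \gtrsim 1$ (\cite{epperly23}), we have $a(a+b)/k \gtrsim a$, so
\[
\frac{a(a+b)}{k\ell} + \frac{b}{\ell} \gtrsim \frac{a+b}{\ell}.
\]
Hence the Chebyshev bound is dominated by the claimed right-hand side. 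This edge case is the main point requiring care; the rest is bookkeeping.

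The two remaining consequences follow directly. Relative error $\varepsilon$ holds once the radicand is at most $\rc\,\varepsilon^2$, which is exactly the condition $\ell \gtrsim (a(a+b)/k + b)/\varepsilon^2$. For an \rmps with $\chi \asymp n/\varepsilon$, \cref{thm:rmps_variance} together with $(1+x/\chi)^{n-1} \le \e^{x n/\chi}$ gives $a \le 2+O(\varepsilon)$ and $b \lesssim \varepsilon$. Then $a(a+b) \lesssim 1$, and the radicand is $\lesssim 1/(k\ell) + \varepsilon/\ell$. With $k = \ell \asymp 1/\varepsilon$ this is $\lesssim \varepsilon^2$, as required. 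The complex case uses the complex variance bound in the same way.
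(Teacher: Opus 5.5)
Your proposal is correct and follows the paper's proof: apply the spectral-accuracy bound with $r = \lfloor \rc\, k/(a+b)\rfloor$, bound the Frobenius tail by a multiple of $\tr(\mA)^2/r$ and the nuclear tail by $\tr(\mA)$, then plug in the \rmps parameters $a \lesssim 1$, $b \lesssim \varepsilon$; where the paper cites Gilbert et al.\ for the Frobenius tail, you derive it directly from $\lambda_{r+1}(\mA) \le \tr(\mA)/(r+1)$. Your separate Chebyshev argument for the degenerate case $r = 0$ (i.e., $k \lesssim a+b$) is a correct addition covering a case the paper's proof passes over silently.
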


To prove this result, we need the following standard comparison between the best rank-$r$ approximation error in the Frobenius norm and the trace norm.

\begin{importedlemma}[Frobenius to trace, \protect{\cite[Lem.~7]{gilbert2007one}}] \label{implem:l1l2}
    Let $\mA$ be a psd matrix and fix a rank $r\ge 1$.
    Then
\begin{equation*}
        \norm{\mA - \lowrank{\mA}_r}_{\rm F}^2 \le \frac{1}{2r} \tr(\mA)^2.
    \end{equation*}
\end{importedlemma}

With this result at hand, we can prove \cref{cor:nystrompp-worst-case}.

\begin{proof}[Proof of \cref{cor:nystrompp-worst-case}]
    By \cref{thm:nystrompp-generic}, it holds with probability $0.98$ that
\begin{equation*}
        |\nystromppest - \tr(\mA)|^2 \lesssim \frac{a}{\ell} \snorm{\mA - \lowrank\mA_r}_{\rm F}^2 + \left(\frac{a}{\ell r} + \frac{b}{\ell} \right) \norm{\smash{\mA - \lowrank{\mA}_r}}_*^2
    \end{equation*}
for some $r = \lfloor \rc \cdot k/(a+b) \rfloor$ with absolute constant $0 < \rc < 1$.
    Apply \cref{implem:l1l2} and the trivial comparison $\norm{\smash{\mA - \lowrank{\mA}_r}}_* \le \tr(\mA)$ to obtain
\begin{equation*}
        |\nystromppest - \tr(\mA)|^2 \lesssim \left(\frac{a}{\ell r} + \frac{b}{\ell} \right) \tr(\mA)^2 \lesssim \left(\frac{a(a+b)/k+b}{\ell} \right) \tr(\mA)^2.
    \end{equation*}
This is the first conclusion \cref{eq:nystrompp-worst-case}.

    For an \rmps, setting $\chi \asymp n /\varepsilon$ yields $a \lesssim 1$ and $b\lesssim \varepsilon$.
    Setting $k = \ell$ and applying \cref{eq:nystrompp-worst-case} gives
\begin{equation*}
        |\nystromppest - \tr(\mA)| \lesssim \sqrt{\left(\frac{1}{k^2} + \frac{\varepsilon}{k}\right)} \cdot \tr(\mA)
    \end{equation*}
Thus, setting $k = \ell \asymp 1/\varepsilon$ suffices to achieve relative error $\varepsilon$.
\end{proof}

\section*{Acknowledgments} 
We thank Gil Goldshlager, Zhen Huang, Michael Ragone, Lin Lin, Diyi Liu, and Kevin Stubbs for helpful discussions.
ENE acknowledges support from the Miller Institute for Basic Research in Science, University of California Berkeley.
CC has been supported by the Caltech Kortschak Scholars Program and the NSF GRFP Award 2139433.
RAM would like to acknowledge the Caltech S2I Center and the DARPA DIAL and DARPA AIQ programs for providing partial support of this work.
JAT has been supported by the Caltech Carver Mead New Adventures Fund and ONR Award N00014-24-1-2223.

\section*{AI Statement}

This work used a large language model coding assistant to support code profiling and implementation optimization.

\appendix 

\section{Details for experiments}

\label{app:experiment_details}
All experiments were run on the UC Berkeley SCF cluster, using 8 cores of a dual-socket AMD EPYC
7543 (2.8 GHz) CPU. 
Our code is written in \texttt{python} $
\texttt{v3.12.12}$ with some choice operations implemented in \texttt{C}. 

\paragraph{\Cref{fig:rnla_fails_at_scale}}

We represent the PTFIM Hamiltonian $\cref{eq:ptfim}$ as a bond-dimension $5$ \mpo by adding a bond-dimension $2$ periodic-boundary \mpo to the standard bond-dimension $3$ open-chain TFIM \mpo \cite[p.~7]{pirvu2010matrix}.
In our experiments, we use interaction strength $J=1$, transverse field strength to $h=8$, and inverse temperature $\beta=0.5$. 
Following \cite{epperly24trace}, we improve numerical stability by applying XNysTrace to the shifted operator $\e^{-\beta(\mH+b\mI)}$ for $b=n(|J|+|h|)$ and scaling the result to remove the effect of the shift.

The action $\vomega\mapsto \e^{-\beta\mH}\vomega$ is approximated using GSE-TDVP1 \cite{Haegeman_2011,Haeg16,YA20} with \mpo--\mps products evaluated using the SRC method with adaptive truncation \cite[App.~C]{camano25}.
We use $\lceil 5\sqrt n\,\rceil$ imaginary-time steps and set the truncation cutoff to $10^{-14}$ for subspace enrichment, SRC, and TDVP.
Mirroring the implementation of GSE-TDVP1 in the ITensor software library, global subspace enrichment is performed once at the start of time evolution. 
We use three steps of subspace enrichment with $\chi = 1$ and two steps of enrichment when $\chi > 1$, which were the minimal values we found the led to satisfactory results.

We test \rmpss with $\chi\in\{2,4,8\}$ and report $\chi=4$, which gave the lowest runtime. Although this value is below the theoretically supported scaling $\chi\asymp n$, the smaller probe bond dimension substantially reduces the cost of TDVP1-GSE and \GramNystrom, offsetting the additional GSE-TDVP1 queries needed to reach the target accuracy.

\newcommand{\nA}{n_{\scalebox{0.6}{$\scriptscriptstyle\rm A$}}}
\paragraph{\Cref{fig:nystrom}}
Given an \mps $\vpsi \in \bbC^{d^n}$ of order $n$ with bond dimension $\xi$ and contiguous subsystems of sizes $n_{\rm A} + n_{\rm B} = n$, it is straightforward to represent the reduced density matrix $\mrho_{\rm A} \coloneqq \tr_{\rm B}(\vpsi\vpsi^*) \in \bbC^{d^{\nA} \times d^{\nA}} \cong \bbC^{d^{2\nA}}$ as an \mps of order $2n_{\rm A}$ and bond dimension $\xi$ (not an \mpo!).
The eigenvalues of $\mrho_{\rm A}$ are readily obtained by placing this \mps into an appropriate canonical form, and matrix--\mps products $\mrho_{\rm A}\vomega$ are obtained by applying \vomega with the $n_{\rm A}$ rightmost sites and contracting down to an \mps of order $n_{\rm A}$ and bond dimension $\xi$.
In our experiments, we use this \mps representation of $\mrho_{\rm A}$ both to extract the true eigenvalues and to perform matrix--\mps products.

\bibliographystyle{siamplain}

\end{document}